\title{Optimal Shape of a  Domain which minimizes the first Buckling Eigenvalue}
\author{Kathrin Stollenwerk}
\date{}
    \newcommand{\bilap}[1]{\Delta^{\!2}\hspace{-0.25mm}#1 }
    \newcommand{\grad}[1]{\nabla \! #1}
    \newcommand{\lap}[1]{\Delta \hspace{-0.15mm} #1 }
    \newcommand{\aq}{\Leftrightarrow} 
    \renewcommand{\epsilon}{\varepsilon}
    \newcommand{\eps}{\varepsilon}
     \renewcommand{\theta}{\vartheta}   
    \renewcommand{\phi}{\varphi}   
    \newcommand{\norm}[1]{\|#1\|}
    \newcommand{\abs}[1]{\left|#1\right|}
    \newcommand{\J}{\mathcal{J}}
     \newcommand{\Je}{\mathcal{J}_\epsilon} 
 \newcommand{\Ie}{\mathcal{I}_\epsilon} 
    \newcommand{\C}{{\mathcal{C}^\eps\!\!}}        
    \renewcommand{\L}{\Lambda}  
    \newcommand{\Le}{\L_\epsilon}   
     \newcommand{\R}{\mathbb{R}}   
    \newcommand{\N}{\mathbb{N}}    		
  \newcommand{\dist}{\operatorname{dist}}
    \newcommand{\ue}{u_\epsilon}	
    \newcommand{\fe}{f_\epsilon}
    \newcommand{\interior}{\operatorname{int}}
  \newcommand{\U}{\mathcal{U}}
    \newcommand{\Ue}{\U_\epsilon}
  \newcommand{\E}{\mathcal{F}_\eps}
  \newcommand{\W}{\mathcal{W}}
 \newcommand{\Geo}{\Gamma^0_\eps}
 \newcommand{\Ge}{\Gamma^1_\eps}
\renewcommand{\O}{\mathcal{O}}
\newtheorem{theorem}{Theorem}
\newtheorem{definition}{Definition}
\newtheorem{corollary}{Corollary}
\newtheorem{remark}{Remark}
\newtheorem{lemma}{Lemma}
\begin{document}
\maketitle
\begin{abstract}
In this paper we prove the existence of an optimal domain which minimizes the buckling load of a clamped plate among all bounded domains with given measure. Instead of treating this variational problem with a volume constraint, we introduce a problem without any constraints, but with a penalty term. We concentrate on the minimizing function and prove that it has Lipschitz continuous first derivatives. Furthermore, we show that  the penalized problem and the original problem can be treated as equivalent. Finally, we establish some qualitative properties of the free boundary.
%\subclass{49K20 \and 35J30}
%\keywords{Calculus of variations \and buckling load \and clamped plate}
\end{abstract}
%%%% INTRODUCTION
\section{Introduction}
The question, which domain minimizes the first eigenvalue of the Laplacian with Dirichlet boundary conditions, is probably one of the most famous questions in shape optimization. In 1877, Lord Rayleigh  claimed that among all plane domains with the same area the disk is the optimal domain \cite{rayleigh}. In the 1920s, G.~Faber  and E.~Krahn simultaneously, but independently, proved Lord Rayleigh's conjecture \cite{faber,krahn}.
\newline
The present work is concerned with an apparently analogue question, namely which domain of given measure minimizes the buckling load of a clamped plate? There exists a conjecture concerning the buckling load, which is analogue to Rayleigh's conjecture. In 1951, G. Polya and G. Szegö  claimed 
that the ball minimizes the buckling load among all open sets of given measure \cite{PolyaSzego}. 
Thereby the buckling load of a domain $\Omega \subset \R^n$, $n\geq 2$, is defined as
\[
     \L(\Omega) := \min_{\substack{ v \in H^{2,2}_0(\Omega) \\ v \not\equiv 0 }}\frac{\int\limits_{\Omega}\abs{\lap v}^2 dx}{\int\limits_{\Omega}\abs{\grad v}^2dx}.
\]
For each $\Omega$ there exists an $u\in H^{2,2}_0(\Omega)$ for which the infimum for $\L(\Omega)$ is attained. The function $u$ satisfies
\[
  \bilap u + \L(\Omega) \lap u =0
\]
in $\Omega$. Thus, we call $\L(\Omega)$ the first buckling eigenvalue of $\Omega$.
\newline
The Polya-Szeg\"o conjecture is still not proven. However, some partial results are known. Assuming that the first eigenfunction does not change its sign, G.~Szegö  gave a proof \cite{szego50,PolyaSzego}. Though, in general the eigenfunction does not satisfy the assumed property. 
Considering the two-dimensional case, two uniqueness results are known. Assuming that a smooth and simply connected optimal domain exists,  H.\,F.~Weinberger and B.~Willms (see \cite{Wi95}) were able to prove the Polya-Szegö conjecture. Performing the shape derivative of the optimal domain, they obtained a further boundary condition for the eigenfunction. Denoting the eigenfunction by $u$, they found that $\lap u+\L u = const.$ in the optimal domain. Subsequently, applying estimates between the first buckling eigenvalue and higher Dirichlet-Laplace eigenvalues they could prove that the optimal domain is a disk.  Secondly, it is possible to adopt the proof of E.~Mohr, who showed that under the previous assumptions the disk minimizes the first eigenvalue of a clamped plate \cite{mohr}, to the buckling of a clamped plate \cite{Kn08}. This approach uses the second domain derivative of the optimal domain.
\newline
In 2003, M.\,S.~Ashbaugh and D.~Bucur in \cite{BucurAshbaugh} proved the existence of an optimal domain among all simply connected domains of given measure in two dimensions. They did not gain any result regarding the regularity of the optimal domain, but they outlined possible ways of applying the Weinberger-Willms idea without a priori assuming  the regularity of the optimal domain. 
\newline
The previous mentioned articles focus on the optimal domain and  examine the eigenfunction just marginally. In this work, we choose an opposed strategy and concentrate ourselves on the eigenfunction.
In this way, we prove the existence of an optimal domain among all open sets of given measure which are contained in a large ball $B \subset \R^n$ ($n\in\{2,3\}$). Thus, we avoid the difficulties, which apprear considering subsets of $\R^n$ instead of subsets of $B$. 
Particularly, the existence of an optimal domain now follows from the direct method in the calculus of variation. In contrast to M.\,S. Ashbaugh and D. Bucur in \cite{BucurAshbaugh}, we do not require any concentration compactness methods.
In order to obtain an optimal domain, which fulfils the volume condition, we solve a penalized variational problem. In this way, the volume condition is not a side condition anymore and we obtain a variational problem without any constraints.
We will prove the existence of a solution for the penalized problem and show that this solution solves the original problem if the penalized problem satisfies a certain condition. 
Furthermore, we will obtain that the first order derivatives of each solution of the penalized problem are Lipschitz continuous. We will prove that the first order derivatives of the solutions do not degenerate along the free boundary. Consequently, we can establish a lower bound on the free boundary's density and derive that the free boundary is a nullset with respect to the $n$-dimensional Lebesgue measure. Moreover, the lower bound on the density allows us to deduce some results regarding the shape of the optimal domain.
\newline
In the sequel, we analyze the following minimizing problems. 
Let $B \subset \R^n$ be a ball with large volume, i.\,e. \!$\abs{B}>>1$. Then for $v \in H^{2,2}_0(B)$ the functional $\J : H^{2,2}_0(B)\to \R$ is defined by
\begin{equation}\notag
 \J(v) := \frac{\int\limits_B \abs{\lap v}^2dx}{\int\limits_{B}\abs{\grad v}^2dx}\,.
\end{equation}
In addition, for a function $v \in H^{2,2}_0(B)$ we set 
\begin{equation}\label{eq:O}
  \O(v) := \{x \in B: v(x)\neq 0\}.
\end{equation}
Now we fix an $\omega_0\in\R$ with $0<\omega_0<< \abs{B}$. This quantity $ \omega_0$ is the intended volume the optimal domain should attain. 
Hence, the question which set among all open subsets of $B$ with given measure $\omega_0$ minimizes the first buckling eigenvalue is equivalent to the following variational problem:
\vspace{-.6cm}\begin{center}
\begin{equation*}
 \boxed{ \begin{gathered}
 \text{Find a function } u\in H^{2,2}_0(B) \text{ with} \abs{\O(u)} = \omega_0 \text{ such that }  \\
  \J(u) = \min_{\substack{v \in H^{2,2}_0(B)\\\abs{\O(v)}=\omega_0}} \J(v).
 \end{gathered} }
\tag{$P$}\label{P}\end{equation*}
\end{center}
This is the actual problem we will solve. 
Assuming that $u \in H^{2,2}_0(B)$ solves the problem~\eqref{P}, the optimal set is $\O(u)$.
 However, the volume condition $\abs{\O(u)}=\omega_0$ causes several difficulties while discussing the problem \eqref{P}. Whenever we perturb a function $u \in H^{2,2}_0(B)$ with  $\abs{\O(u)}=\omega_0$, we have to guarantee that the perturbed function satisfies the volume constraint as well. To avoid this difficulty, we follow an idea of H.~W.~Alt and L.~A.~Caffarelli in \cite{AltCaf81} and 
consider a penalized problem. In this way,  non-volume preserving perturbations are allowed. For this purpose, we define for $\eps>0$ the function $\fe : \R \to \R$ by
\begin{equation}\label{fe}
  \fe(s) := \begin{cases}
             0, & s\leq \omega_0 \\
             \frac{1}{\epsilon}(s-\omega_0), &s\geq \omega_0
            \end{cases}.
\end{equation}
Now we set for $v\in H^{2,2}_0(B)$
\begin{equation}\label{eq:def_Je}
 \Je(v) := \J(v) + \fe(\abs{\O(u)}) \,.
\end{equation}
The additional term $\fe$ penalizes the functional if the $n$-dimensional Lebesgue measure of $\O(u)$ gets larger than $\omega_0$. 
Thus, we may omit the side condition '$\abs{\O(u)}=\omega_0$' in the problem \eqref{P} and obtain  
 the following new variational problem, in which no constraints occur: 
\vspace{-.6cm}\begin{center}
\begin{equation*}
 \boxed{ \begin{gathered}
 \text{Find a function } \ue\in H^{2,2}_0(B)  \text{ such that }  \\
  \Je(u) = \min_{v \in H^{2,2}_0(B)} \Je(v).
 \end{gathered} }
\tag{$P_\eps$}\label{Pe}\end{equation*}
\end{center}
Handling this problem is much more comfortable than the problem~\eqref{P}. Indeed, in the sequel we will often take advantage of the opportunity to perform non-volume preserving perturbations of the eigenfunction. 
%%%%
%%%% SECTION 2
\section{Existence of a Solution of the Penalized Problem}
To begin with, we prove the existence of solutions for the penalized problem \eqref{Pe} and establish a first regularity result for the minimizers. 
We obtain the existence of a minimizer $\ue\in H^{2,2}_0(B)$ for the functional $\Je$ for every $\eps>0$ and show that each  $\ue$ is a solution of the buckled plate equation in the set $\O(u)$. Moreover, we  obtain that the $n$-dimensional Lebesgue measure of $\O(u)$ cannot be smaller than the intended volume $\omega_0$.
In addition, we detect the Hölder continuity of the first order derivatives of the solutions of the penalized problem \eqref{Pe}.

Using the direct method in the calculus of variation, we prove the existence of a solution of the penalized problem \eqref{Pe}. 

\begin{definition}
If a function $w_\eps$ in $H^{2,2}_0(B)$ satisfies
 \[
   \Je(w_\eps) \; \leq \; \Je(v)
 \]
 for all $v\in H^{2,2}_0(B)$,  we call $w_\eps$ a minimizer of the functional $\Je$ in $H^{2,2}_0(B)$ or a solution of the problem  \eqref{Pe}. 
\end{definition}

\begin{theorem}\label{existence_sobo}
  For every $\eps>0$ there exists a minimizer $u_\epsilon \in H^{2,2}_0(B)$ of the functional $\Je$.
\end{theorem}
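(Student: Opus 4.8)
The plan is to use the direct method in the calculus of variations. First I would take a minimizing sequence $(v_k)_{k\in\N} \subset H^{2,2}_0(B)$ for $\Je$, i.e.\ $\Je(v_k) \to m_\eps := \inf_{v \in H^{2,2}_0(B)} \Je(v)$. Since $\Je$ is scale invariant in the sense that $\J(\lambda v) = \J(v)$ and $\O(\lambda v) = \O(v)$ for every $\lambda \neq 0$, I may normalize the sequence by requiring $\int_B \abs{\grad{v_k}}^2\,dx = 1$ for all $k$; then $\Je(v_k) = \int_B \abs{\lap{v_k}}^2\,dx + \fe(\abs{\O(v_k)})$, and both summands are nonnegative, so for large $k$ we get a uniform bound $\int_B \abs{\lap{v_k}}^2\,dx \le m_\eps + 1$. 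Together with the normalization of the gradient and the Poincaré inequality on the bounded set $B$, this yields a uniform bound on $\norm{v_k}_{H^{2,2}_0(B)}$.

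Next I would extract, by reflexivity of $H^{2,2}_0(B)$, a subsequence (not relabeled) with $v_k \rightharpoonup \ue$ weakly in $H^{2,2}_0(B)$. By the Rellich--Kondrachov theorem the embedding $H^{2,2}_0(B) \hookrightarrow H^{1,2}_0(B)$ is compact, so $v_k \to \ue$ strongly in $H^{1,2}_0(B)$; in particular $\int_B \abs{\grad\ue}^2\,dx = 1$, so $\ue \not\equiv 0$ and the denominator in $\J(\ue)$ does not vanish. Weak lower semicontinuity of the convex functional $v \mapsto \int_B \abs{\lap v}^2\,dx$ gives $\int_B \abs{\lap\ue}^2\,dx \le \liminf_k \int_B \abs{\lap{v_k}}^2\,dx$, hence $\J(\ue) \le \liminf_k \J(v_k)$.

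It remains to handle the penalty term, and this is the main obstacle, since $v \mapsto \abs{\O(v)}$ is not continuous under weak convergence — mass can be lost in the limit, so a priori one only expects some form of lower semicontinuity. After passing to a further subsequence I may assume $v_k \to \ue$ pointwise a.e.\ in $B$; then $\{\ue \neq 0\} \subset \liminf_k \{v_k \neq 0\}$ up to a nullset, so by Fatou's lemma (applied to the indicator functions) $\abs{\O(\ue)} \le \liminf_k \abs{\O(v_k)}$. Since $\fe$ is nondecreasing and continuous, this gives $\fe(\abs{\O(\ue)}) \le \liminf_k \fe(\abs{\O(v_k)})$. Combining with the previous paragraph — and using that $\liminf$ of a sum is bounded by the sum of the $\limsup$ of one part and the $\liminf$ of the other, or more simply passing to a common subsequence realizing $\liminf \Je(v_k) = m_\eps$ — I conclude $\Je(\ue) \le m_\eps$, hence $\Je(\ue) = m_\eps$ and $\ue$ is a minimizer. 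I would take care to state the pointwise-convergence and Fatou step cleanly, as that is where the (lower) semicontinuity of the volume term is really used; everything else is the standard coercivity-plus-weak-lower-semicontinuity pattern.
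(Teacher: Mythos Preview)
Your proof is correct and follows the same direct-method strategy as the paper: normalize the minimizing sequence so that $\int_B\abs{\grad v_k}^2\,dx=1$, deduce a uniform $H^{2,2}_0(B)$ bound, extract a weak limit, and verify lower semicontinuity of both summands of $\Je$ separately. The only genuine difference lies in how you establish $\abs{\O(\ue)} \le \liminf_k \abs{\O(v_k)}$. The paper argues via Banach--Alaoglu: it extracts a weak-$\ast$ limit $\beta\in L^\infty(B)$ of the characteristic functions $\chi_{\O(v_k)}$, then uses the identity $\int_B v_k^{\pm}(1-\chi_{\O(v_k)})\,dx=0$ together with strong $L^1$ convergence of $v_k^{\pm}$ to conclude $\beta=1$ a.e.\ on $\O(\ue)$, whence $\abs{\O(\ue)}\le\int_B\beta\,dx=\liminf_k\abs{\O(v_k)}$. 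You instead pass to a pointwise a.e.\ convergent subsequence, observe that $\{\ue\neq 0\}\subset\liminf_k\{v_k\neq 0\}$ up to a nullset, and apply Fatou's lemma to the indicators. Your route is more elementary and transparent; the paper's avoids the extra subsequence extraction but is otherwise no more informative, since in both cases only the $\liminf$ inequality is obtained (and only that is needed). Either argument completes the proof.
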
 
\begin{proof}
Since the functional $\Je$ (as defined in \eqref{eq:def_Je}) is nonnegative, there exists a minimizing sequence $(u_k)_k \in H^{2,2}_0(B)$ with
\[
  \lim_{k\to\infty} \Je(u_k) \; = \; \inf_{v\in H^{2,2}_0(B)} \Je(v)\,.
\]
Without loss of generality, we assume that there exists a constant $C>0$ such that $\Je(u_k) \leq C$ for all $k \in \N$; otherwise we set $\Je(u_k) = \infty$. Thus, we are able to normalize the sequence $(u_k)_k$ such that $\norm{\grad u_k}_{L^2(B)}=1$ for all $k\in\N$.
This normalization implies 
 \[
  \norm{u_k}^2_{H^{2,2}_0(B)}\,=\,   \int\limits_B \abs{D^2u_k}^2\,dx\,=\,\int\limits_B \abs{\lap{u_k}}^2\,dx \, \leq \Je(u_k) \leq C\,.
 \]  
Thus, there exists a subsequence $(u_k)_k$ which converges weakly to an $u_\epsilon$ in $H^{2,2}_0(B)$\,. We observe that $\norm{\grad \ue}_{L^2(B)}=1$.
The lower semicontinuity of the $H^{2,2}_0(B)$ norm implies
 \[
    \int\limits_B \abs{\lap{u_\epsilon}}^2dx \, =\, \int\limits_B \abs{D^2u_\epsilon}^2dx \, \leq \liminf_{k\to \infty} \int\limits_B \abs{D^2u_{k}}^2dx \, = \, 
    \liminf_{k\to \infty}\int\limits_B \abs{\lap{u_{k}}}^2dx\,.
 \] 
It remains to prove the lower semicontinuity of the penalization term $\fe$ with respect to the weak convergence in $H^{2,2}_0(B)$. Since $\fe$ is nondecreasing, it is sufficient to show that 
\begin{equation} \label{eq:vol_est}
   \abs{\O(\ue)} \leq \liminf_{k\to\infty}\abs{\O(u_k)}.
\end{equation}
where $\O(u_{k})$ and $\O(\ue)$ are defined as in \eqref{eq:O}.
For this purpose, note that due to the theorem of Banach-Alaoglu, there exists a function $\beta \in L^\infty(B)$ with $0\leq\beta\leq1$ such that  (at least for a subsequence of $(u_k)_k$) there holds
\[
   \lim_{k\to\infty}\int\limits_B\chi_{\O(u_k)}\phi \,dx = \int\limits_B\beta\phi\,dx \quad \forall\phi \in L^1(B).
\] 
Consequently, we obtain
\begin{align*}
 0 &= \lim_{k\to\infty}\left(  \int\limits_B u_k^+(1-\chi_{\O(u_k)})\,dx + \int\limits_B u_k^-(1-\chi_{\O(u_k)})\,dx  \right) \\
 &= \int\limits_B \ue^+(1-\beta)\,dx + \int\limits_B \ue^-(1-\beta)\,dx,
\end{align*}
where $v^+:= \max\{v,0\}$ and $v^-:= \max\{-v,0\}$. Since both summands are nonnegative, this implies $\beta = 1$ almost everywhere in $\O(\ue)$. Hence,
\[
 \abs{\O(\ue)} = \int\limits_{\O(\ue)}1\,dx \leq \int\limits_B\beta\,dx = \liminf_{k\to\infty}\int\limits_B\chi_{\O(u_k)}\,dx = \abs{\O(u_k)}. 
\]
Finally, we find
\[
   \Je(u_\epsilon) \, \leq \,\liminf_{k \to\infty} \Je(u_k) \, = \, \inf_{v\in H^{2,2}_0(B)}\Je(v).
\] 
\end{proof}
Note that for $n\in \{2,3\}$ the set $\O(\ue)$ (defined as in \eqref{eq:O})
 is an open subset of $B$. Specifically, the normalization $\norm{\grad\ue}_{L^2(B)}=1$ provides that $\ue\not\equiv 0$ and so $\O(\ue)\neq\emptyset$.
\begin{remark}\label{rem:phases_touch}
We have to think of $\ue$ as of a function which changes its sign. Hence, the positve and the negative phase are not empty, i.e.
\[
   \{\ue>0\}\neq \emptyset \; \mbox{and} \; \{\ue<0\}\neq \emptyset.
\]
Note that, if the positive and negative phase have positive distance, we can substitute $\ue$ by $\abs{\ue}$. Thus, we may think of the minimizer $\ue$ as of a function, which does not change its sign. Then G.~Szegö showed that $\O(\ue)$ is a ball \cite{szego50,PolyaSzego}. \\
Therefore, in this work, we only consider the case that the two phases touch. 
\end{remark}
\vspace{.3cm} 
Note that we do not have any information whether the $n$-dimensional Lebesgue measure of $\partial\O(\ue) = \partial\{\ue>0\}\cup\partial\{\ue>0\}$ is zero or not. We cannot answer this question until Lemma~\ref{lem:nullset}. 
From now on, we set
\begin{equation}\label{eq:Le}
 \Le := \int\limits_B\abs{\lap\ue}^2dx = \J(\ue).
\end{equation}
Classical variational arguments show that each minimizer $\ue$ of the functional $\Je$ in $H^{2,2}_0(B)$  solves the buckled plate equation in $\O(\ue)$, i.\,e.
 \begin{equation}\label{eq:var_eq}
    \bilap\ue+\Le\lap\ue=0 \, \text{ in } \O(\ue) 
 \end{equation}
where $\Le$ is defined in \eqref{eq:Le}.

The next theorem shows that for every $\eps>0$ the $n$-dimensional Lebesgue measure of $\O(\ue)$ cannot fall below $\omega_0$. 
This result is independent of $\eps$. 
\begin{theorem}\label{supp_bigger}
 Suppose $\ue \in H^{2,2}_0(B)$ minimizes $\Je$. Then for each $\eps>0$ there holds
\[
 \abs{\O(\ue)} \geq \omega_0\,.
\]
\end{theorem}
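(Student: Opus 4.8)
I would argue by contradiction and assume $\abs{\O(\ue)}<\omega_0$. The whole point of the penalisation is that $\fe$ is identically $0$ on $(-\infty,\omega_0]$; hence $\fe(\abs{\O(\ue)})=0$, so $\Je(\ue)=\J(\ue)$, and moreover $\Je(v)=\J(v)$ for \emph{every} competitor $v\in H^{2,2}_0(B)$ with $\abs{\O(v)}\le\omega_0$. Consequently it is enough to produce one such competitor $v$ with $\J(v)<\J(\ue)$: this would contradict the minimality of $\ue$.

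The competitor is obtained by a similarity rescaling of $\ue$. For $t>1$ I put $v(x):=\ue(x/t)$ — more precisely a translated dilate of $\ue$ by the factor $t$, chosen so that the dilated set still lies in $B$ (see below). The chain rule gives $\lap v(x)=t^{-2}(\lap\ue)(x/t)$ and $\grad v(x)=t^{-1}(\grad\ue)(x/t)$, and the substitution $y=x/t$ yields
\[
  \int_B\abs{\lap v}^2\,dx=t^{\,n-4}\int_B\abs{\lap\ue}^2\,dx,\qquad
  \int_B\abs{\grad v}^2\,dx=t^{\,n-2}\int_B\abs{\grad\ue}^2\,dx,
\]
so that $\J(v)=t^{-2}\J(\ue)<\J(\ue)$. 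On the other hand $\O(v)$ is the $t$-fold dilate of $\O(\ue)$, whence $\abs{\O(v)}=t^{\,n}\abs{\O(\ue)}$; choosing $t$ in the nonempty interval $\bigl(1,(\omega_0/\abs{\O(\ue)})^{1/n}\bigr)$ keeps $\abs{\O(v)}<\omega_0$. Then $\Je(v)=\J(v)<\J(\ue)=\Je(\ue)$, the desired contradiction.

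The only genuine difficulty is the parenthetical point above: one must ensure that a dilate of $\O(\ue)$ by a factor $t>1$ can actually be placed inside $B$, i.e.\ that after a suitable translation $t\,\O(\ue)\subseteq B$ for some $t>1$. Since $\omega_0\ll\abs{B}$ the positivity set has measure far below that of $B$, which is what makes room for such a dilation; the subtle part is that a small-measure open set can still "span" $B$ (its circumradius could coincide with the radius of $B$, as for a thin tube along a diameter), and in that degenerate situation no similarity of ratio $>1$ fits into $B$. I would handle this by a preliminary reduction that uses $\omega_0\ll\abs{B}$ to place $\O(\ue)$ (after translation) strictly inside $B$, so that its circumradius is $<$ that of $B$ and the admissible interval of $t$ is nonempty; this measure-theoretic/geometric step, rather than the elementary scaling identity, is where I expect the work to lie. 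Finally note that $\eps$ enters nowhere in the argument, which is exactly why the bound $\omega_0$ is independent of $\eps$.
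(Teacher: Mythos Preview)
Your scaling idea is genuinely different from the paper's argument and is more elementary in spirit: it exploits directly the homothety $\J(v_t)=t^{-2}\J(v)$, a fact the paper itself uses later (Remark~\ref{est_Le}, Theorem~\ref{theo:u_min_I}). When the dilate can actually be placed inside $B$, your argument is complete and cleaner than the paper's.

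However, the gap you flag is real and you have not closed it. At this point in the paper nothing is known about $\O(\ue)$ beyond $\ue\in H^{2,2}_0(B)$; there is no regularity, no separation from $\partial B$, no connectedness. Your proposed ``preliminary reduction'' is only an assertion, and in fact it cannot work as stated: translation does not change the circumradius of a set, so if $\O(\ue)$ already has circumradius equal to the radius of $B$ (your thin-tube example), \emph{no} translate of any $t$-dilate with $t>1$ fits into $B$. The condition $\abs{\O(\ue)}<\omega_0\ll\abs{B}$ gives no control on the diameter. The result that eventually rules out contact with $\partial B$ is Lemma~\ref{lem:R_0}, but its proof sits on top of the entire regularity and nondegeneracy machinery, which in turn uses the present theorem; invoking it here would be circular.

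The paper avoids this obstruction by working purely locally. It picks a free-boundary point $x_0\in\partial\O(\ue)\setminus\partial B$, replaces $\ue$ on a small ball $B_r(x_0)\subset B$ by the solution $v$ of $\bilap v+\Le\lap v=0$ with $v-\ue\in H^{2,2}_0(B_r(x_0))$, and compares. Since the modification lives in $B_r(x_0)$, there is no global fitting issue, and $\abs{\O(\hat v)}\le\abs{\O(\ue)}+\abs{B_r}\le\omega_0$ for $r$ small, so the penalty still vanishes. The contradiction then comes from Poincar\'e's inequality on $B_r(x_0)$ (which forces $\ue\equiv v$ there) together with analyticity of solutions of the elliptic equation. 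Your approach trades this local PDE step for a global scaling; the price is precisely the geometric obstruction you were unable to remove.
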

\begin{proof}
 We assume that $ \abs{\O(\ue)}< \omega_0$ holds for at least one $\eps>0$.
Since $\ue$ is continuous, we can choose $x_0 \in \partial\O(\ue)\setminus\partial B$ such that $x_0$ is an accumulation point of  $\{x \in B : \ue(x)=0\}$. Moreover, let $r>0$ such that $B_r(x_0)\subset B$.
We define $\hat{v}$ as
\begin{equation}\notag
 \hat{v} := \begin{cases}
               \ue, & \text { in }  B\setminus B_{r}(x_0)\\
               v, & \text{ in } B_r(x_0)
            \end{cases}\,,
\end{equation}
where $v-\ue \in H^{2,2}_0(B_r(x_0))$ and  $v$ is a solution of 
\begin{equation}\notag
     \bilap v + \Le \lap v =0 \quad \text{ in } B_r(x_0) \,.
\end{equation}
 Obviously, there holds \, $\O(\hat{v})\subset\left(\O(\ue)\cup B_r(x_0)\right)$ \,
and our assumption implies $\abs{\O(\hat{v})}  \leq \omega_0$ if $r$ is chosen sufficiently small. Therefore, the penalization term $\fe$ (as defined in \eqref{fe}) fulfils $\fe(\abs{\O(\hat{v})})=0$ and $\fe(\abs{\O(\ue)})=0$, of course. The minimality of $\ue$ with respect to $\Je$ then leads to the following local inequality:
\begin{equation}\label{eq:bigger:local}
  \int\limits_{B_r(x_0)} \abs{\lap \ue}^2 - \abs{\lap v}^2dx \leq \Le \int\limits_{B_r(x_0)}\abs{\grad \ue}^2 - \abs{\grad v}^2dx\,. 
\end{equation}
Using integration by parts, Green's identity and the definition of $v$, we calculate
\begin{align*}
 &\int\limits_{B_r(x_0)}\hspace{-2mm}\abs{\lap \ue}^2\, - \abs{\lap v}^2dx  
= \; \int\limits_{B_r(x_0)}\hspace{-2mm}\abs{\lap (\ue-v)}^2 dx + 2\Le\int_{B_r(x_0)}\hspace{-2mm}\grad v.\grad(\ue-v)\,dx\,.
\end{align*}
Hence, inequality \eqref{eq:bigger:local} reads as
\[
 \int\limits_{B_r(x_0)}\abs{\lap (\ue-v)}^2dx \leq \Le \int\limits_{B_r(x_0)}\abs{\grad (\ue-v)}^2dx\,.
\]
Since $\ue-v \in H^{2,2}_0(B_r(x_0))$, we can apply Poincar\'{e}'s inequality. This yields
\begin{equation}\label{eq:bigger:contra}
 \int\limits_{B_r(x_0)}\abs{\lap (\ue-v)}^2dx \leq \Le\, r^2 \int\limits_{B_r(x_0)}\abs{\lap (\ue-v)}^2dx\,.
\end{equation}
Provided that the integral in \eqref{eq:bigger:contra} does not vanish, \eqref{eq:bigger:contra} is contradictory for $r$ small enough. 
Let us assume the integral in \eqref{eq:bigger:contra} vanishes. In this case, $\ue \equiv v$ in $\overline{B_r(x_0)}$. Furthermore, $v$ is analytic in $B_{\nicefrac{r}{2}}(x_0)$ because $v$ solves an elliptic equation. Consequently, $\ue$ is analytic in $B_{\nicefrac{r}{2}}(x_0)$, too. 
Since $x_0$ is an accumulation point of the zero set of $\ue$, the identity theorem for power series implies that $\ue\equiv 0$ in $B_{\nicefrac{r}{2}}(x_0)$. 
This is contradictory since $x_0 \in \partial\O(\ue)$. Thus, the claim is proven.
\end{proof}
\begin{remark}\label{rem:penal_term}
 The previous result is a consequence of the choice of the penalization term $\fe$. We have chosen a penalization term, which is monotone but not strictly monotone. Another possibility of defining the penalization term would be 
\[
   \hat{\fe}(s) := \begin{cases}
   -\eps(\omega_0 - s), & \text{if }  s\leq \omega_0 \\
  \frac{1}{\eps}(s-\omega_0), & \text{if } s> \omega_0
\end{cases}.
\] 
This penalization term was chosen by C.~Bandle and A.~Wagner in \cite{BaWa09}, e.g..
It rewards the functional if the support is smaller than $\omega_0$. This rewarding would annihilate our argumentation in the previous proof. 
 Indeed, if we chose $\hat{\fe}$ instead of $\fe$, the inequality (\ref{eq:bigger:contra}) would be
\begin{align*}
  (1-\L r^2)\int\limits_{B_r(x_0)}\abs{\lap(\ue-v)}^2dx  \leq  \eps \abs{B_r(x_0)\cap \{\ue=0\}}\int\limits_B\abs{\grad v}^2dx.
\end{align*}
Obviously, arguing like in the proof of Theorem~\ref{supp_bigger} does not work anymore. However, we would need an estimate in the form of 
\[
  \int\limits_{B_r(x_0)}\abs{\lap(\ue-v)}^2dx \geq C \abs{B_r(x_0)\cap \{\ue=0\}},
\]
where the constant $C$ is independet of $\eps$.
H.\,W.~Alt and L.\,A.~Caffarelli in \cite{AltCaf81} obtained an analogous estimate using comparison principles for solutions of second order partial differential equations. Since we do not possess any comparison or maximum principle, we are not able to adopt their approach. This is why, we have chosen the monotone, but not strictly monotone, penalization term $\fe$.
\end{remark}

\subsection{\texorpdfstring{$C^{1,\alpha}$ Regularity of the Minimizers $\ue$}{Hölder-Regularity}}\label{sec:c^1,alpha-reg}

Our next aim is to show the Hölder continuity of the first order derivatives of $\ue$. 
We choose an approach using Morrey's Dirichlet Growth Theorem, which has been used for similar problems (see \cite{BaWa07}, e.g.). 
For the proof of the following theorem we refer to \cite{morrey}.
\begin{theorem}[Morrey's Dirichlet Growth Theorem]\label{morrey_theorem} 
 Let $\phi \in H^{1,p}_{0}(B)$, $1\leq p\leq n$, $0 < \alpha \leq 1$ and suppose there exists a constant $M>0$, such that 
 \begin{equation}\label{eq:assumption_morrey}
   \int\limits_{B_r(x_0)\cap B} \abs{\grad{\phi(x)}}^pdx \; \leq \; M\,r^{n-p+\alpha p}
 \end{equation}
 for every $B_r(x_0)$ with $x_0 \in \overline{B}$. Then $\phi \in C^{0,\alpha}(\overline{B})$\,. 
\end{theorem}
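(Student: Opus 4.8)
The statement is the classical Morrey--Campanato embedding theorem, so the plan is to deduce from the Dirichlet growth hypothesis \eqref{eq:assumption_morrey}, via Poincaré's inequality, a Campanato-type decay estimate for the mean oscillation of $\phi$ itself, and then to run the standard dyadic iteration that converts such a decay into Hölder continuity. First I would extend $\phi$ by zero outside $B$; since $\phi\in H^{1,p}_0(B)$, this extension lies in $H^{1,p}(\R^n)$, and for an arbitrary ball $B_r(x_0)$ with $x_0\in\R^n$ one still has $\int_{B_r(x_0)}\abs{\grad\phi}^p\,dx\le \widetilde M\, r^{n-p+\alpha p}$ with $\widetilde M = 2^{n}M$ --- for a center outside $\overline B$ one simply compares $B_r(x_0)$ with a concentric ball of radius $2r$ centered at the nearest point of $\overline B$, and outside $B$ the gradient vanishes. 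Hence it suffices to exhibit a $C^{0,\alpha}(\R^n)$ representative of the extended function, whose restriction to $\overline B$ then gives the claim.

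The first analytic step is to control the oscillation of $\phi$ at a single scale. Denoting by $\phi_{x_0,\rho}$ the average of $\phi$ over $B_\rho(x_0)$, Poincaré's inequality gives
\[
 \Big(\tfrac{1}{\abs{B_\rho(x_0)}}\int_{B_\rho(x_0)}\abs{\phi-\phi_{x_0,\rho}}^p\,dx\Big)^{1/p}\;\le\;C\,\rho\,\Big(\tfrac{1}{\abs{B_\rho(x_0)}}\int_{B_\rho(x_0)}\abs{\grad\phi}^p\,dx\Big)^{1/p}\;\le\;C\,\widetilde M^{1/p}\,\rho^{\alpha},
\]
where the last inequality uses \eqref{eq:assumption_morrey}: dividing its right-hand side by $\abs{B_\rho(x_0)}\sim\rho^{n}$ produces the factor $\rho^{-p(1-\alpha)}$, whose $p$-th root combines with the explicit $\rho$ to give exactly $\rho^{\alpha}$.

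Next I would iterate over dyadic scales. Comparing the averages at radii $\rho$ and $2\rho$ and using Hölder's inequality together with the estimate just obtained yields $\abs{\phi_{x_0,\rho}-\phi_{x_0,2\rho}}\le C\,\widetilde M^{1/p}\rho^{\alpha}$; summing this geometric series along $\rho_k=2^{-k}\rho$ shows that $(\phi_{x_0,\rho_k})_k$ is Cauchy, that its limit equals $\phi(x_0)$ for a.e.\ $x_0$ by Lebesgue's differentiation theorem, and that $\abs{\phi(x_0)-\phi_{x_0,\rho}}\le C\,\widetilde M^{1/p}\rho^{\alpha}$ for all small $\rho$. Taking this continuous representative as $\phi$, and estimating, for $d:=\abs{x_1-x_2}$,
\[
 \abs{\phi(x_1)-\phi(x_2)}\;\le\;\abs{\phi(x_1)-\phi_{x_1,2d}}+\abs{\phi_{x_1,2d}-\phi_{x_2,2d}}+\abs{\phi_{x_2,2d}-\phi(x_2)},
\]
I bound the outer terms by the pointwise estimate and the middle term by comparing both $\phi_{x_1,2d}$ and $\phi_{x_2,2d}$ with the mean of $\phi$ over $B_d(x_1)\subset B_{2d}(x_1)\cap B_{2d}(x_2)$, each error being controlled by the single-scale oscillation bound. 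Altogether $\abs{\phi(x_1)-\phi(x_2)}\le C\,\widetilde M^{1/p}\abs{x_1-x_2}^{\alpha}$, i.e.\ $\phi\in C^{0,\alpha}(\overline B)$.

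The one genuinely delicate point is the uniformity of all estimates up to the boundary $\partial B$: the hypothesis \eqref{eq:assumption_morrey} is stated only for centers in $\overline B$ and for the truncated balls $B_r(x_0)\cap B$, so one must ensure this still yields full control near $\partial B$. This is exactly what the zero-extension step at the outset takes care of, and it is legitimate precisely because $\phi$ has vanishing boundary trace; everything after that is the routine Campanato iteration. An equally classical alternative is Morrey's original argument, which represents $\phi(x)-\phi_{x_0,R}$ as an integral of $\grad\phi$ against the kernel $\abs{x-y}^{1-n}$ and estimates it by splitting into a neighbourhood of $x$ and its complement; I would nonetheless prefer the Campanato route here, since it avoids singular-integral bookkeeping.
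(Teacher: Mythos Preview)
Your argument is correct and follows the standard Campanato route to Morrey's embedding; the extension by zero (legitimate because $\phi\in H^{1,p}_0(B)$) is precisely what is needed to pass from the truncated balls $B_r(x_0)\cap B$ in the hypothesis to full balls in $\R^n$, and the dyadic iteration thereafter is routine. Note, however, that the paper does not supply its own proof of this classical result: it simply states the theorem and refers the reader to Morrey's monograph \cite{morrey}, so there is no in-paper argument to compare against beyond that citation.
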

We need to verify the assumptions of Theorem \ref{morrey_theorem} for the second order derivatives of $\ue$. Since we are not interested in any local results, we must allow to consider balls $B_R(x_0)$ leaving $B$. For this reason, we need the version of Morrey's Dirichlet Growth Theorem formulated above. Thus, we may consider $B_R(x_0)\cap B$ if $B_R(x_0)\not\subset B$.  Now let $x_0 \in \overline{B}$ and $R>0$. 
 We define the comparison function $\hat{v}$ by
\begin{equation}\label{eq:reg_1_11}
 \hat{v} := \begin{cases}
                \ue, & \text{in}\; B\setminus B_R(x_0) \\
		 v, & \text{in}\; B_R(x_0)\cap B
               \end{cases},
\end{equation}
where $v-\ue \in  H^{2,2}_0(B_R(x_0)\cap B)$ and $v$ is biharmonic in $B_R(x_0) \cap B$. We only consider balls $B_R(x_0)$ which intersect $\O(\ue)$ and satisfy $\O(\ue)\not\subset B_R(x_0)$; otherwise, the function $\hat{v}$ vanishes. Now suppose $0<r<R$. Then the estimate
\begin{equation}\label{eq:reg_1_1}
 \int\limits_{B_r(x_0)\cap B} \abs{D^2\ue}^2dx \leq 2\int\limits_{B_R(x_0)\cap B}\abs{D^2(v-\ue)}^2dx + 2 \int\limits_{B_r(x_0)\cap B}\abs{D^2 v}^2\,dx
\end{equation}
is obvious. The next lemma helps to estimate the last term in the above inequality.

\vspace{.3cm}
\begin{lemma}\label{reg_1_la_1}
Using the above notation, there exists a constant $C>0$ such that for each  $r <R$ the following estimate holds
 \[
   \int\limits_{B_r(x_0)\cap B}\abs{D^2 v}^2dx \leq C\,\left(\frac{r}{R}\right)^n \,\int\limits_{B_R(x_0)\cap B}\abs{D^2\ue}^2dx\,.
 \]
Thereby the constant $C$ is independent of $r,\,R$ and $x_0$\,.
\end{lemma}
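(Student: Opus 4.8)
The plan is to exploit the fact that $v$ is biharmonic in $\Omega_R:=B_R(x_0)\cap B$ together with interior-type estimates for biharmonic functions, in the form of a decay (Campanato-type) estimate for $\int_{B_\rho}|D^2 v|^2$. The key analytic input is the following standard fact: if $w$ is biharmonic in a ball $B_R$, then for $0<\rho<R$ one has $\int_{B_\rho}|D^2 w|^2\,dx \leq C(\rho/R)^n\int_{B_R}|D^2 w|^2\,dx$, with $C$ depending only on $n$. This is proved by expanding $w$ in solid (poly)harmonics / using that $D^2 w$ is, componentwise, a linear combination of functions controlled by harmonic functions, and harmonic functions satisfy the sub-mean-value property which upgrades to the monotonicity of the mean of the square on concentric balls; the extra factor $(\rho/R)^n$ comes from comparing $L^2$ masses on balls of radius $\rho$ and $R$ after subtracting the appropriate Taylor polynomial of degree $1$ (so that $D^2 w$ has mean controlled). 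I would state this as an auxiliary lemma and prove it by the classical argument, or simply cite it.

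The only subtlety is that $\Omega_R$ is in general not a ball but the intersection $B_R(x_0)\cap B$, so the clean interior estimate for biharmonic functions on balls does not apply verbatim. First I would dispose of the easy case: if $B_R(x_0)\subset B$, then $\Omega_R=B_R(x_0)$ and the interior estimate applies directly, giving the claim with $\Omega_r=B_r(x_0)$ as well. In the general case $x_0\in\partial B$ (or close to it), I would use the geometry of the ball $B$: since $B$ is a fixed ball, $B_R(x_0)\cap B$ has a Lipschitz boundary with Lipschitz constants uniform in $x_0\in\overline B$ and $R$, and moreover $v-\ue\in H^{2,2}_0(\Omega_R)$ means $v$ satisfies zero Dirichlet and Neumann data on the part of $\partial\Omega_R$ lying on $\partial B$. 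The plan is then to invoke a boundary version of the Campanato decay estimate for the biharmonic equation with homogeneous Dirichlet conditions on the flat (or Lipschitz) portion $\partial B\cap B_R(x_0)$: reflecting across $\partial B$ (after flattening, or using that a ball is smooth) extends $v$ to a function which is biharmonic, or solves a biharmonic-type equation with bounded coefficients, on a full ball, and one applies the interior estimate there. The scaling in $R$ and the independence of $C$ from $x_0$ follow because $B$ is a single fixed smooth domain, so after rescaling by $R$ the flattened domains converge to a half-space uniformly.

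The key steps in order: (1) record the interior decay estimate for biharmonic functions on balls, $\int_{B_\rho}|D^2 w|^2\leq C(\rho/R)^n\int_{B_R}|D^2 w|^2$; (2) in the case $B_R(x_0)\subset B$ apply it directly to $w=v$; (3) in the boundary case, use that $v$ has vanishing Cauchy data on $\partial B\cap B_R(x_0)$, flatten the boundary and reflect (or cite a boundary Campanato estimate for the bilaplacian under Dirichlet conditions), to obtain the same decay on $\Omega_\rho=B_\rho(x_0)\cap B$; (4) combine, absorbing all geometric constants into a single $C=C(n,B)$, and rename $\rho=r$, $R=R$; note that since $B$ is a fixed ball the constant does not depend on $x_0$ because after rescaling the family of flattened domains is compact. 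I would point out that $\int_{B_R(x_0)\cap B}|D^2 v|^2\leq \int_{B_R(x_0)\cap B}|D^2\ue|^2$ is not needed here — the right-hand side already features $\ue$ only because $v$ will later be compared to $\ue$ via \eqref{eq:reg_1_1}; for \emph{this} lemma it suffices to keep $\int_{B_R(x_0)\cap B}|D^2 v|^2$ and observe $\int_{B_R(x_0)\cap B}|D^2 v|^2\le \int_{B_R(x_0)\cap B}|D^2\ue|^2$ because $v$ minimizes the Dirichlet-type energy $\int|D^2\cdot|^2$ among functions with the same boundary data (biharmonicity is the Euler--Lagrange equation of $\int|\Delta\cdot|^2=\int|D^2\cdot|^2$ on $H^{2,2}_0$), which gives the stated right-hand side.

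The main obstacle I anticipate is step (3): making the boundary decay estimate genuinely uniform in $x_0\in\overline B$ and in $R$, rather than merely local. The honest way is to rescale $\Omega_R$ by $1/R$ centered at $x_0$, note the rescaled domains $(\Omega_R-x_0)/R$ all lie between a ball and a half-ball with curvature $O(R)\to$ flat, extract a limiting half-space problem, and use that the biharmonic Poisson-type kernels for these domains depend continuously (in the relevant operator norm) on the domain; alternatively, flatten $\partial B$ by a fixed smooth diffeomorphism, which turns $\Delta^2 v=0$ into a fourth-order equation with smooth coefficients on a half-ball, and quote the standard Campanato/Schauder-type decay estimate for such equations with homogeneous Dirichlet data, whose constant depends only on the ellipticity and the $C^k$ norms of the coefficients — both controlled by $B$ alone. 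Everything else is routine.
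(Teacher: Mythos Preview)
Your proposal is correct and follows essentially the same two-step structure as the paper: first establish the Campanato-type decay
\[
  \int_{B_r(x_0)\cap B}|D^2 v|^2\,dx \leq C\left(\frac{r}{R}\right)^n \int_{B_R(x_0)\cap B}|D^2 v|^2\,dx,
\]
then pass from $v$ to $\ue$ on the right-hand side using that $v$ is the unique minimizer of $\int_{B_R(x_0)\cap B}|D^2 w|^2\,dx$ among $w-\ue\in H^{2,2}_0(B_R(x_0)\cap B)$. The only difference is that the paper does not carry out your step~(3) explicitly: for the decay estimate (including the boundary case $B_R(x_0)\not\subset B$) it simply cites equation~(3.2) in the proof of Lemma~3.2 of Giaquinta--Modica \cite{GM1978}, which already provides the uniform Campanato decay for higher-order elliptic systems on domains of the type $B_R(x_0)\cap B$. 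Your flattening/reflection outline is precisely how one would reprove that cited result, so what you anticipate as the ``main obstacle'' is in fact absorbed into the literature reference.
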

\begin{proof}
 We first show that
\[
 \int\limits_{B_r(x_0)\cap B}\abs{D^2v}^2dx \leq C\left(\frac{r}{R}\right)^n \int\limits_{B_R(x_0)\cap B}\abs{D^2v}^2dx\,.
\]
Note that this claim is obvious if $\frac{R}{2}\leq r<R$. Assuming $r<\frac{R}{2}$, equation (3.2) in  the proof of Lemma 3.2 in \cite{GM1978} implies
\begin{align*}
 \int\limits_{B_r(x_0)\cap B}\abs{D^2v}^2dx 
&\leq  C \left(\frac{r}{R}\right)^n \int\limits_{B_R(x_0)\cap B}\abs{D^2v}^2dx\,.
\end{align*}
Since $v$ is the unique solution of
\[
\min\left\{  \int\limits_{B_R(x_0)\cap B}\abs{D^2w}^2dx: w - \ue\in  H^{2,2}_0(B_R(x_0)\cap B)\right\},
\]
the claim is proven.
\end{proof}
It remains to estimate the first integral on the right hand side of \eqref{eq:reg_1_1}. 
\begin{lemma}\label{reg_1_la_3}
Assume the same situation as in Lemma \ref{reg_1_la_1}. Then the estimate 
 \[
   \int\limits_{B_R(x_0)\cap B}\abs{D^2 (\ue-v)}^2dx \,\leq\, C(n,\omega_0,\Le)\,R^{n-2+2\alpha}
 \] 
holds for each $\alpha \in (0,1)$, where C is a positive constant depending only on $n,\,\omega_0$ and $\Le$.
\end{lemma}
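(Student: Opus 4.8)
The goal is to bound the ``energy of the difference'' $\int_{B_R(x_0)\cap B}|D^2(\ue-v)|^2\,dx$ by $C R^{n-2+2\alpha}$, uniformly in $x_0$ and for $R$ bounded. The natural starting point is the minimality of $\ue$ for $\Je$ tested against the comparison function $\hat v$ from \eqref{eq:reg_1_11}: since $\hat v=\ue$ outside $B_R(x_0)$, we get
\[
 \int\limits_{B_R(x_0)\cap B}\abs{\lap\ue}^2dx + \fe(\abs{\O(\ue)})
 \;\leq\; \int\limits_{B_R(x_0)\cap B}\abs{\lap v}^2dx + \fe(\abs{\O(\hat v)}).
\]
Because $\O(\hat v)\subset \O(\ue)\cup B_R(x_0)$, the difference of the penalization terms is at most $\tfrac1\eps|B_R(x_0)\cap B|\leq C(\eps,n)R^n$ (and in fact, by Theorem \ref{supp_bigger}, $\fe(\abs{\O(\ue)})$ sits on the flat part, so one only pays the volume added by the ball). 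Since $R^n\leq R^{n-2+2\alpha}$ for $R$ bounded when $\alpha<1$, this term is harmless. Exactly as in the proof of Theorem \ref{supp_bigger}, I would expand $\int|\lap\ue|^2-|\lap v|^2 = \int|\lap(\ue-v)|^2 + 2\lap v.\lap(\ue-v)$ and use that $v$ is biharmonic in $B_R(x_0)\cap B$ together with $\ue-v\in H^{2,2}_0(B_R(x_0)\cap B)$ to kill the cross term, arriving at
\[
 \int\limits_{B_R(x_0)\cap B}\abs{\lap(\ue-v)}^2dx
 \;\leq\; \Le\!\!\int\limits_{B_R(x_0)\cap B}\!\!\abs{\grad(\ue-v)}^2dx \;+\; C(\eps,n)\,R^n.
\]
Using $\abs{\lap w}^2 = \abs{D^2 w}^2$ for $w\in H^{2,2}_0$ of the intersection (after the usual extension-by-zero argument, valid since $B$ is a ball and the intersection is Lipschitz), the left side is $\int|D^2(\ue-v)|^2$.

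The remaining task is therefore to absorb the gradient term $\int_{B_R(x_0)\cap B}|\grad(\ue-v)|^2dx$. The key point is that $\ue-v$ vanishes to second order on the boundary of $B_R(x_0)\cap B$, so Poincaré's inequality applies twice: $\int|\grad(\ue-v)|^2 \leq C r_0^2 \int|D^2(\ue-v)|^2$ where $r_0\asymp R$ is the relevant diameter. Hence
\[
 \Le\!\!\int\limits_{B_R(x_0)\cap B}\!\!\abs{\grad(\ue-v)}^2dx \;\leq\; C\Le R^2\!\!\int\limits_{B_R(x_0)\cap B}\!\!\abs{D^2(\ue-v)}^2dx,
\]
and for $R$ small enough (say $R\leq R_0(n,\Le)$) the factor $C\Le R^2\leq\tfrac12$ can be absorbed into the left-hand side, giving $\int|D^2(\ue-v)|^2 \leq C(\eps,n)R^n \leq C(n,\omega_0,\Le)R^{n-2+2\alpha}$ on that scale. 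For $R\geq R_0$ the estimate is trivial because the whole quantity is bounded by $2\int_B|D^2\ue|^2 + 2\int_B|D^2 v|^2 \leq C\Le$ (using minimality of $v$) and $R^{n-2+2\alpha}$ is bounded below by $R_0^{n-2+2\alpha}$; adjusting the constant handles this range.

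The main obstacle I anticipate is tracking the $\eps$-dependence and making sure the constant really depends only on $n,\omega_0,\Le$ and \emph{not} on $\eps$, as the statement demands. The crude bound $\tfrac1\eps|B_R(x_0)\cap B|$ is $\eps$-dependent and unacceptable; the fix is to use Theorem \ref{supp_bigger} more carefully — since $\abs{\O(\ue)}\geq\omega_0$, one must argue that the competitor $\hat v$ can be taken so that its added volume does not increase the penalty beyond a controlled amount, or better, to invoke an a priori bound of the form $\fe(\abs{\O(\ue)})\leq\Je(\ue)\leq C(n,\omega_0)$ coming from comparing $\ue$ with a fixed admissible function supported on a ball of volume $\omega_0$. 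This last comparison gives $\abs{\O(\ue)}\leq\omega_0+\eps C$, so the penalization contribution of the ball $B_R(x_0)$ is controlled by $\min\{\tfrac1\eps R^n, C\}$; in either regime one recovers a bound of the form $C(n,\omega_0,\Le)R^{n}$, and the proof goes through. A secondary technical point is the validity of $\|\lap w\|_{L^2}=\|D^2 w\|_{L^2}$ and of the iterated Poincaré inequality on $B_R(x_0)\cap B$ rather than on a full ball; this is standard for the intersection of a ball with a fixed ball $B$, with constants depending only on $n$, but should be stated carefully.
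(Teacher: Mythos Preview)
Your proposal has a genuine gap in removing the $\eps$-dependence, and this is precisely the point where the paper's argument differs from yours. Your claim that the penalization contribution is controlled by $\min\{\tfrac{1}{\eps}R^n,\,C\}$ and that ``in either regime one recovers a bound of the form $C(n,\omega_0,\Le)R^n$'' is false: if $\tfrac{1}{\eps}R^n\le C$ the bound is $\tfrac{1}{\eps}R^n$, which is not $\le C'R^n$ with $C'$ independent of $\eps$; if $\tfrac{1}{\eps}R^n>C$ the bound is the constant $C$, which is not $\le C'R^n$ for small $R$. In neither regime do you get what you need. The a~priori bound $\fe(\abs{\O(\ue)})\le C(n,\omega_0)$ only limits $\abs{\O(\ue)}-\omega_0$, not the \emph{increment} $\fe(\abs{\O(\hat v)})-\fe(\abs{\O(\ue)})$, which is exactly $\tfrac{1}{\eps}$ times the added volume once you are above $\omega_0$. (There is also a minor slip: your first displayed inequality ignores that $\Je$ involves the Rayleigh quotient, so the denominators $\int_B|\nabla\hat v|^2$ and $\int_B|\nabla\ue|^2$ do not simply cancel; this is repairable, but not as written.)

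The paper circumvents the penalization entirely by a scaling trick: instead of comparing $\ue$ with $\hat v$, it rescales $\hat v$ to $w(x):=\hat v(\mu x)$ with $\mu=\bigl(\abs{\O(\hat v)}/\abs{\O(\ue)}\bigr)^{1/n}\ge 1$, so that $\abs{\O(w)}=\abs{\O(\ue)}$ and the penalization terms cancel exactly. The minimality $\Je(\ue)\le\Je(w)$ then yields, after unscaling, a local inequality with an extra factor $\Le(1-\mu^{-2})$; since $\abs{\O(\ue)}\ge\omega_0$ by Theorem~\ref{supp_bigger} and $\abs{\O(\hat v)}\le\abs{\O(\ue)}+\abs{B_R}$, one gets $1-\mu^{-2}\le C(n,\omega_0)R^n$, which is $\eps$-free. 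The gradient term is then handled not by Poincar\'e-absorption but by integration by parts and Young's inequality, reducing to $\Le^2\int_{B_R\cap B}(\ue^2+v^2)\,dx\le C(\Le)R^n$ via the Sobolev embedding $H^{2,2}_0\hookrightarrow L^\infty$ in dimensions $n\in\{2,3\}$. Your Poincar\'e-absorption for the gradient term would also work at this stage, but the scaling step is the missing idea.
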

\begin{proof}
We fix $R<1$ such that $\abs{B_R(x_0)}<\omega_0$. Due to Theorem \ref{supp_bigger} this implies $\abs{B_R(x_0)}<\abs{\O(\ue)}$ and the case $\O(\ue) \subset B_R(x_0)$ is excluded. As mentioned above, we only consider the case $B_R(x_0)\cap  \O(\ue)\neq \emptyset$; otherwise, $\ue-v$ vanishes in $B_R(x_0)$. We obtain the result by comparing the $\Je$-energies of $\ue$ and $\hat{v}$, where  $\hat{v}$ is defined as in \eqref{eq:reg_1_11}.  Yet, it may occur  that $\O(\hat{v}) \supset \O(\ue)$.  This inhibits a reasonable comparison of $\Je(\ue)$ and $\Je(\hat{v})$ because of the monotonicity of the penalization term.  We circumvent this problem by scaling $\hat{v}$. This step is not necessary if there holds $\O(\hat{v})\subset \O(\ue)$. In this case, we can compare $\Je(\ue)$ and $\Je(\hat{v})$ immediately. 
 Therefore, we now concentrate on the case $\O(\hat{v}) \supset \O(\ue)$.  
Let $B^\ast \subset B$ be a ball concentric to $B$.
We define $w(x) := \hat{v}(\mu x)$ for $x \in B^\ast$. Thereby, $\mu\geq1$ is chosen such that
$\abs{\O(w)}=\abs{\O(\ue)}$.  To be precise, $B$ is thought to be a ball with radius $R_0$ and centre $x_B$. We set $B^\ast = B_{R^\ast}(x_B) $, where
\[
  R^\ast := \frac{R_0}{\mu}  \qquad \text{and} \qquad \mu :=  \left(\frac{\abs{\O(\hat{v})}}{\abs{\O(\ue)}}\right)^\frac{1}{n} >1\,.
\]
Thus, $w \in  H^{2,2}_0(B^\ast)\subset H^{2,2}_0(B)$. 
 Furthermore, there holds \\ $\O(\hat{v})\subset \left(\O(\ue)\cup B_R(x_0)\right)$ and we estimate
\[
 1 < \mu \leq \left(1+\frac{\abs{B_R}}{\abs{\O(\ue)}}\right)^\frac{1}{n}.
\]
Using Taylor's expansion and Theorem \ref{supp_bigger}  yield
\begin{equation}\label{eq:reg_1_5}
 1-\mu^{-2} \leq C(n,\omega_0)R^n.
\end{equation}
The minimality of $\ue$ for $\Je$ in $H^{2,2}_0(B)$ now implies
\begin{equation}\label{eq:reg_1_6}\notag
\Le\int_B\abs{\grad w}^2 dy \leq \int\limits_B\abs{\lap w}^2dy \; \Leftrightarrow \; \Le\mu^{-2}\int\limits_{B}\abs{\grad \hat{v}}^2dx \leq \int\limits_{B}\abs{\lap\hat{v} }^2dx\,.
\end{equation}
Rearranging terms we obtain the local inequality
\[
 \int\limits_{B_R\cap B}\abs{\lap{\ue}}^2- \abs{\lap{v}}^2dx \leq \Le\left(1-\frac{1}{\mu^2}\right) + \frac{\Le}{\mu^2}\left[\,\,\int\limits_{B_R\cap B}\abs{\grad{\ue}}^2-\abs{\grad{v}}^2dx\right],
\]
where we denote $B_R= B_R(x_0)$ for simplicity. Since $v$ is biharmonic in $B_R\cap B$ and $v-\ue \in  H^{2,2}_0(B_R\cap B)$, we obtain 
\begin{equation}\label{eq:reg_1_8}
  \int\limits_{B_R\cap B}\abs{\lap{(\ue-v)}}^2dx \,\leq\, \Le\left(1-\frac{1}{\mu^2}\right) + \frac{\Le}{\mu^2}\,\int\limits_{B_R\cap B}\grad{(\ue-v)}.\grad{(\ue+v)}\,dx. 
\end{equation}
Integration by parts and Young's inequality imply
\[
   \int\limits_{B_R\cap B}\abs{\lap{(\ue-v)}}^2\,dx \; \leq \; 2\Le\left(1-\frac{1}{\mu^2}\right) + 2\Le^2 \int\limits_{B_R\cap B}\ue^2+v^2dx\,.
\]
Since we only consider $n\in\{2,3\}$, the classical Sobolev embedding theorems imply 
\[
 \abs{v(x)},\abs{\ue(x)}\leq C\norm{\ue}_{H^{2,2}_0(B)} = C(\Le)
\]
for each $x \in B_R(x_0)\cap B$. 
Hence,  estimate \eqref{eq:reg_1_5} yields
\begin{align*}
  \int\limits_{B_R(x_0)\cap B}\abs{\lap{(\ue-v)}}^2dx \,
  &\leq \, C(n,\omega_0,\Le)R^n.
  \end{align*}
Since $v-\ue \in  H^{2,2}_0(B_R(x_0)\cap B)$ and $R<1$ the claim is proven.
\end{proof}
The next lemma is the last technical tool, which is necessary to prove the $C^{1,\alpha}$ regularity of the minimizer. For the proof we refer to \cite{giaq_mult_int}, Chapter III.
\begin{lemma}\label{reg_1_la_2}
Let $\Phi$ be a nonnegative and nondecreasing function. Suppose that 
there exist positive constants $\gamma,\alpha,\kappa,\beta$ , $\beta<\alpha$, such that 
for all $0 \leq r \leq R \leq R_0$ 
 \[
   \Phi(r) \leq \gamma\,\left[\left(\frac{r}{R}\right)^\alpha + \delta\right]\,\Phi(R) + \kappa\,R^\beta.
 \]
Then there exists a constant $\delta_0 =\delta_0(\gamma,\alpha,\beta)$ such that if $\delta<\delta_0$, for all $r<R\leq R_0$ we have
 \[
   \Phi(r) \leq c\,\left(\frac{r}{R}\right)^\beta\,[\Phi(R) + \kappa\,R^\beta] \,,
 \]
 where $c$ is a constant depending on $\alpha, \beta$ and $\gamma$\,.
\end{lemma}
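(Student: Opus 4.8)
I would recognize this as the classical Campanato--Giaquinta iteration (``hole‑filling'') lemma. The plan is to observe that one cannot iterate the hypothesis naively — the prefactor $\gamma\big[(r/R)^{\alpha}+\delta\big]$ need not be smaller than $1$ for all ratios $r/R$ — and to repair this by freezing $r/R$ at a single conveniently small geometric scale $\tau\in(0,1)$ at which the prefactor becomes strictly less than $1$, and then iterating along the radii $\tau^{k}R$. The smallness hypothesis $\delta<\delta_0$ is exactly what makes this first step possible.

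Concretely, I would first fix an intermediate exponent $\sigma$ with $\beta<\sigma<\alpha$, e.g.\ $\sigma=\tfrac{\alpha+\beta}{2}$, and then pick $\tau=\tau(\alpha,\beta,\gamma)\in(0,1)$ so small that $2\gamma\,\tau^{\alpha}\le\tau^{\sigma}$, which is possible since $\alpha-\sigma>0$. Setting $\delta_{0}:=\tau^{\sigma}/(2\gamma)$ (so $\delta_0$ depends only on $\alpha,\beta,\gamma$), the hypothesis applied with $r$ replaced by $\tau R$ gives, for every $R\le R_{0}$ when $\delta<\delta_{0}$,
\[
\Phi(\tau R)\ \le\ \gamma\big(\tau^{\alpha}+\delta\big)\,\Phi(R)+\kappa R^{\beta}\ \le\ \tau^{\sigma}\,\Phi(R)+\kappa R^{\beta}.
\]

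Next I would iterate: replacing $R$ by $\tau^{k}R$ and inducting on $k$ yields
\[
\Phi(\tau^{k}R)\ \le\ \tau^{k\sigma}\Phi(R)+\kappa R^{\beta}\sum_{i=0}^{k-1}\tau^{i\sigma+(k-1-i)\beta}\ \le\ \tau^{k\sigma}\Phi(R)+\frac{\kappa R^{\beta}}{1-\tau^{\sigma-\beta}}\,\tau^{(k-1)\beta},
\]
the geometric series converging because $\sigma>\beta$; using again $\sigma>\beta$ to bound $\tau^{k\sigma}\le\tau^{k\beta}$ this becomes $\Phi(\tau^{k}R)\le C\,\tau^{k\beta}\big[\Phi(R)+\kappa R^{\beta}\big]$ with $C=C(\alpha,\beta,\gamma)$. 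Finally, for arbitrary $0<r<R\le R_{0}$ I would choose the integer $k\ge0$ with $\tau^{k+1}R<r\le\tau^{k}R$ and exploit the monotonicity of $\Phi$:
\[
\Phi(r)\ \le\ \Phi(\tau^{k}R)\ \le\ C\,\tau^{k\beta}\big[\Phi(R)+\kappa R^{\beta}\big]\ \le\ C\,\tau^{-\beta}\Big(\tfrac{r}{R}\Big)^{\beta}\big[\Phi(R)+\kappa R^{\beta}\big],
\]
which is the claim with $c:=C\tau^{-\beta}$, depending only on $\alpha,\beta,\gamma$. The only genuinely delicate point is this opening move — selecting $\tau$ and $\delta_0$ so that $\gamma(\tau^{\alpha}+\delta)<\tau^{\sigma}<1$ simultaneously; everything afterward is the routine geometric‑series bookkeeping indicated above, and is carried out in detail in \cite{giaq_mult_int}, Ch.~III.
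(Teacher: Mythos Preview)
Your proof is correct and is precisely the standard iteration argument from Giaquinta's monograph; the paper itself does not supply an independent proof but merely refers the reader to \cite{giaq_mult_int}, Chapter~III, which is exactly the argument you have reproduced.
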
 
\begin{theorem}\label{reg_1}
 Let $\ue$ be a solution of the penalized problem \eqref{Pe}. Then $\ue \in C^{1,\alpha}(\overline{B})$ for each $\alpha \in [0,1)$.
\end{theorem}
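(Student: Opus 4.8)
The plan is to verify the hypotheses of Morrey's Dirichlet Growth Theorem (Theorem~\ref{morrey_theorem}) with $p=2$ applied to the second order derivatives $D^2\ue$, i.e. to each $\partial_{ij}\ue \in H^{1,2}_0(B)$ (after a suitable reflection/extension argument near $\partial B$). Concretely, fix $x_0\in\overline{B}$ and $0<r<R\le R_0$; the starting point is the decomposition \eqref{eq:reg_1_1}, in which $v$ is the biharmonic replacement of $\ue$ on $B_R(x_0)\cap B$. Lemma~\ref{reg_1_la_1} controls the second term $\int_{B_r\cap B}|D^2v|^2$ by $C(r/R)^n\int_{B_R\cap B}|D^2\ue|^2$, and Lemma~\ref{reg_1_la_3} controls the first term $\int_{B_R\cap B}|D^2(\ue-v)|^2$ by $C(n,\omega_0,\Le)R^{n-2+2\alpha}$ for any $\alpha\in(0,1)$.

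Combining these two estimates in \eqref{eq:reg_1_1} gives, for the nonnegative nondecreasing function $\Phi(\rho):=\int_{B_\rho(x_0)\cap B}|D^2\ue|^2\,dx$, an inequality of the shape
\[
 \Phi(r)\;\le\; C\Bigl(\tfrac{r}{R}\Bigr)^{n}\Phi(R) + C(n,\omega_0,\Le)\,R^{n-2+2\alpha}
\]
valid for all $0<r<R\le R_0$ (with $R_0$ small enough that $|B_{R_0}|<\omega_0$, as required in Lemma~\ref{reg_1_la_3}). Now I would invoke Lemma~\ref{reg_1_la_2}: here $\delta=0<\delta_0$ trivially, the exponent $\alpha$ of that lemma is $n$, the exponent $\beta$ is $n-2+2\alpha$, and since $\alpha<1$ we have $\beta=n-2+2\alpha<n$, so the hypotheses are met. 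The conclusion is
\[
 \Phi(r)\;\le\; c\Bigl(\tfrac{r}{R}\Bigr)^{n-2+2\alpha}\bigl[\Phi(R)+\kappa R^{\,n-2+2\alpha}\bigr],
\]
and taking $R=R_0$ fixed yields $\int_{B_r(x_0)\cap B}|D^2\ue|^2\,dx\le M\,r^{n-2+2\alpha}$ with $M$ independent of $x_0$ (using $\Phi(R_0)\le\|D^2\ue\|_{L^2(B)}^2<\infty$). This is exactly the Dirichlet growth condition \eqref{eq:assumption_morrey} for $\phi=\partial_{ij}\ue$ with exponent $\alpha$, so Theorem~\ref{morrey_theorem} gives $D^2\ue\in C^{0,\alpha}(\overline{B})$, hence $\ue\in C^{1,\alpha}(\overline{B})$; since $\alpha\in(0,1)$ was arbitrary and the case $\alpha=0$ is immediate from $\alpha>0$, the statement follows for all $\alpha\in[0,1)$.

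The main obstacle is the bookkeeping for balls that leave $B$: one must make sense of the biharmonic replacement on $B_R(x_0)\cap B$ together with the boundary conditions coming from $\ue\in H^{2,2}_0(B)$, and ensure that the decay estimate of Lemma~\ref{reg_1_la_1} (which rests on equation~(3.2) of \cite{GM1978}) genuinely holds up to and across $\partial B$; this is why the excerpt insists on the $\overline{B}$-version of Morrey's theorem and on restricting to balls with $B_R(x_0)\cap\O(\ue)\neq\emptyset$ and $\O(\ue)\not\subset B_R(x_0)$. A secondary point requiring care is that the constant $M$ in the Dirichlet growth bound must be uniform in $x_0\in\overline{B}$, which is guaranteed because all constants in Lemmas~\ref{reg_1_la_1} and~\ref{reg_1_la_3} are, by their statements, independent of $x_0$, $r$ and $R$. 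Once these technical matters are in place, the assembly via Lemma~\ref{reg_1_la_2} and Theorem~\ref{morrey_theorem} is routine.
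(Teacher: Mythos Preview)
Your approach is essentially identical to the paper's: combine Lemmas~\ref{reg_1_la_1} and~\ref{reg_1_la_3} in \eqref{eq:reg_1_1}, feed the resulting inequality into Lemma~\ref{reg_1_la_2}, and then invoke Morrey's theorem. However, you misapply Theorem~\ref{morrey_theorem} in the final step. The growth estimate you have established is
\[
  \int_{B_r(x_0)\cap B}|D^2\ue|^2\,dx \le M\,r^{n-2+2\alpha},
\]
and comparing with \eqref{eq:assumption_morrey} (with $p=2$) shows that this is the hypothesis for a function $\phi$ whose \emph{gradient} is a second derivative of $\ue$. Hence $\phi$ must be taken to be a \emph{first} derivative $\partial_i\ue$, not $\partial_{ij}\ue$; the paper accordingly concludes $\partial_i\ue\in C^{0,\alpha}(\overline{B})$ for each $i$, and therefore $\ue\in C^{1,\alpha}(\overline{B})$.

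Your assertion that $\partial_{ij}\ue\in H^{1,2}_0(B)$ is unjustified---from $\ue\in H^{2,2}_0(B)$ one only knows $\partial_{ij}\ue\in L^2(B)$---and the stronger conclusion $D^2\ue\in C^{0,\alpha}(\overline{B})$ would require control of $\int_{B_r}|D^3\ue|^2$, which nothing in the argument provides. With this index shift corrected (replace $\partial_{ij}\ue$ by $\partial_i\ue$ and drop the claim about $D^2\ue$), the proof coincides with the paper's.
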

\begin{proof}
We choose $x_0 \in \overline{B}$ and fix $R<1$ such that $\abs{B_R(x_0)}<\omega_0$. Now let $0<r<R$. As mentioned above, we only consider the case $B_r(x_0)\cap\O(\ue)\neq\emptyset$. Now consider the comparison function $\hat{v}$ as in \eqref{eq:reg_1_11}. 
 Due to Lemma \ref{reg_1_la_1} and Lemma \ref{reg_1_la_3}, estimate \eqref{eq:reg_1_1} becomes
\[
 \int\limits_{B_r(x_0)\cap B}\abs{D^2\ue}^2dx \leq  C\,\left(\frac{r}{R}\right)^n\int\limits_{B_R(x_0)\cap B}\abs{D^2\ue}^2dx + C(n,\omega_0,\Le)R^{n-2+2\alpha}.
\]
 and applying Lemma~\ref{reg_1_la_2} leads to
\begin{align*}
 \int\limits_{B_r(x_0)\cap B}\abs{D^2\ue}^2dx &\leq C\,\left(\frac{r}{R}\right)^{n-2+2\alpha}\left(\Le+C(n,\omega_0,\Le)R^{n-2+2\alpha}\right). 
\end{align*}
Since $R$ was fixed, for every $i=1,\ldots,n$ there holds
\begin{equation}\label{eq:reg_1_12}
 \int\limits_{B_r(x_0)\cap B}\abs{\grad \partial_i\ue}^2dx \leq C(n,\omega_0,\Le)r^{n-2+2\alpha}  
\end{equation}
for every $B_r(x_0)$  with $r<R$ and $x_0 \in \overline{B}$.
Consequently, Theorem~\ref{morrey_theorem} implies $\partial_i \ue \in C^{0,\alpha}(\overline{B})$ and we finally obtain $\ue\in C^{1,\alpha}(\overline{B})$ for each $\alpha \in [0,1)$. 
\end{proof}
The $C^{1,\alpha}$ regularity of $\ue$ allows us to split $\partial\O$ in the two parts
\[
  \Geo := \{x \in \partial\mathcal{O}: \abs{\grad\ue(x)}=0\} \; \mbox{and} \; 
 \Ge :=  \{x \in \partial\mathcal{O} : \abs{\grad\ue(x)}>0\}.
\]
Obviously, $\Ge$ is part of a nodal line of $\ue$ since $\ue \in H^{2,2}_0(B)$.
Moreover, the continuity of $\grad \ue$ implies that $\Ge$ is a nullset with respect to the $n$-dimensional Lebesgue measure.
\begin{lemma}\label{lem:nodal_nullset}
 For every $\eps >0$ there holds $\mathcal{L}^n(\Ge)$=0.
\end{lemma}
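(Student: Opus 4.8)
The plan is to show that $\Ge$ is contained in a countable union of $C^1$ hypersurfaces, which automatically forces $\mathcal{L}^n(\Ge)=0$. Recall from the preceding discussion that $\Ge = \{x\in\partial\mathcal{O}: \abs{\grad\ue(x)}>0\}$, and that $\ue\in C^{1,\alpha}(\overline B)$ by Theorem~\ref{reg_1}. The central observation is that at a point $x_0\in\Ge$ we have $\ue(x_0)=0$ (since $x_0\in\partial\O(\ue)$ and $\ue$ is continuous) but $\grad\ue(x_0)\neq 0$. Hence the hypotheses of the implicit function theorem are met for the $C^1$ function $\ue$ at $x_0$: there is a neighbourhood $U$ of $x_0$ in which the level set $\{x: \ue(x)=0\}$ is a $C^1$ hypersurface. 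In particular, $\Ge\cap U$ is contained in this hypersurface, which has vanishing $n$-dimensional Lebesgue measure.

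First I would fix $x_0\in\Ge$, choose coordinates so that $\partial_n\ue(x_0)\neq 0$, and invoke the $C^1$ implicit function theorem to write $\{\ue=0\}\cap U$ as a graph $x_n = g(x_1,\dots,x_{n-1})$ with $g\in C^1$; this is a genuine ($\mathcal{L}^n$-null) hypersurface. Next I would pass from the local statement to a global one: since $\overline B$ is compact (hence $\Ge$, being a subset of $\partial\mathcal O\subset\overline B$, is covered by countably many such neighbourhoods $U_j$), we get $\Ge\subset\bigcup_{j\in\N}\big(\{\ue=0\}\cap U_j\big)$, a countable union of $\mathcal{L}^n$-nullsets, and therefore $\mathcal{L}^n(\Ge)=0$. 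A slightly cleaner alternative, avoiding any covering argument, is to note directly that $\Ge\subset\{x\in\overline B: \grad\ue(x)\neq 0\}\cap\{x:\ue(x)=0\}$, and that the latter set — the part of the zero level set where the gradient does not vanish — is an $(n-1)$-dimensional $C^1$ submanifold of $\overline B$ (again by the implicit function theorem, applied locally), hence Lebesgue-null.

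I do not expect any serious obstacle here; this is essentially a regularity-plus-implicit-function-theorem argument and the result is stated in the paper precisely as an immediate corollary of the $C^{1,\alpha}$ regularity. The only minor point to be careful about is boundary behaviour: a point $x_0\in\Ge$ could in principle lie on $\partial B$, so one should either observe that $\partial\mathcal O\cap\partial B$ is itself contained in the smooth hypersurface $\partial B$ (and hence null), or simply extend $\ue$ by $0$ outside $B$ — which is legitimate since $\ue\in H^{2,2}_0(B)\cap C^{1}(\overline B)$ with vanishing gradient on $\partial B$ — so that the implicit function theorem applies on a full neighbourhood in $\R^n$. With that caveat handled, the conclusion $\mathcal{L}^n(\Ge)=0$ follows.
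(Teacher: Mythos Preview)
Your argument is correct and matches the paper's own proof: both use the $C^{1,\alpha}$ regularity of $\ue$ together with the implicit function theorem to exhibit $\Ge$ locally as a $C^1$ graph, whence $\mathcal{L}^n(\Ge)=0$. The paper's version is terser (it does not spell out the covering step or the boundary caveat), but the idea is identical; incidentally, since $\ue\in H^{2,2}_0(B)\cap C^{1}(\overline B)$ forces $\grad\ue=0$ on $\partial B$, the set $\Ge$ cannot meet $\partial B$ at all, so your boundary discussion, while harmless, is not actually needed.
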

\begin{proof}
Note that $\Ge$ is relativly open in $\partial\O$. Then, since $\ue \in C^{1,\alpha}$, the Implicit Function Theorem yields that $\Ge$ is (locally) a $C^{1,\alpha}$-graph. Therefore, $\mathcal{L}(\Ge)=0$. $\hfill \square$
\end{proof}
From now on, we set 
\begin{equation}\label{eq:ast}
   \Omega(\ue) := \O(\ue)\cup\Ge
\end{equation}
and call $\Geo ( = \partial\Omega(\ue))$ the free boundary.
\begin{remark}
 Note that $\Omega(\ue)$ is an open set in $\R^n$. Moreover, there holds
\begin{align*}
 \bilap\ue + \Le \lap\ue &=0 \mbox{ almost everywhere in } \Omega(\ue) \\ 
  \ue = \abs{\nabla \ue} &=0 \mbox{ in } \Ge = \partial\Omega(\ue).
\end{align*}
Furthermore, there holds $\abs{\Omega(\ue)} = \abs{\O(\ue)}$.
\end{remark}

\begin{remark}\label{est_Le}
  Note that $B$ contains a ball $B'$ with $\abs{B'}=\omega_0$. Consequently, for every $\eps>0$ there holds
\[
  \Le \leq \L(B') =: \L_{max}.
\]
Furthermore, the homothety property $t^2\L(tM)=\L(M)$ of the buckling eigenvalue implies
\[
  \L_{max} = \left(\frac{\omega_n}{\omega_0}\right)^\frac{2}{n}\L_1,
\]
where $\L_1$ is the first buckling eigenvalue of $B_1(0)\subset \R^n$ and $\omega_n := \mathcal{L}^n(B_1)$.
\end{remark}
Consequently, the $\alpha$-Hölder coefficients of $\ue$ and $\grad\ue$ are bounded independently of $\eps$, Moreover, we find that these bounds are also independent of $\alpha$. This is a consequence of Morrey's Dirichlet Growth Theorem. Hence, the Hölder coefficients of  $\ue$ and $\grad\ue$ for arbitrary $\alpha \in [0,1)$ are bounded by constants, which are independent of $\alpha$ and $\eps$, but depend on $n$ and $\omega_0$.
%%%% SECTION 3
\section{\texorpdfstring{$C^{1,1}$ Regularity of the Minimizers}{Lipschitz Regularity}}\label{ch:Reg}

In this section, we prove the $C^{1,1}$ regularity of the minimizers $\ue$ in $\overline{B}$. 
In particular, we show that  bound on the second order derivatives of a minimizer $\ue$ is independent of the parameter $\eps$.
For this purpose, we first show that $\lap\ue$ is bounded independently of $\eps$ almost everywhere in $B$. 
 In the sequel, we denote for $x \in B$
\[
\Ue (x) := \lap \ue(x)+\Le \ue(x)\,.
\]
Since $\ue$ solves \eqref{eq:var_eq}, $\Ue$ is a harmonic function almost everywhere in $\Omega(\ue)$.
\begin{theorem}\label{theorem:eli}
 Suppose $\ue \in H^{2,2}_0(B)$ is a minimizer of $\Je$. Assume $B_r(x_0) \subset \R^n$ with $x_0 \in \overline{B}$ and let 
\[
  \C_{B_r(x_0)} := \{v-\ue\in H^{2,2}_0(B_r(x_0)): \O(v)\cap B_
r(x_0)\subset\O(\ue)\cap B_r(x_0)\}\,.
\]
Then for each $v \in \C_{B_r(x_0)}$ there holds
\begin{equation}\notag
  \int\limits_{B_r(x_0)}\Ue\lap(\ue-v)\,dx \leq 0\,. 
\end{equation}
\end{theorem}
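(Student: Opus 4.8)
The plan is to test the minimality of $\ue$ against the convex combinations $u_t := (1-t)\,\ue + t\,v = \ue + t\phi$, where $\phi := v-\ue\in H^{2,2}_0(B_r(x_0))$ and $t>0$ is small, and then to differentiate the resulting inequality at $t=0^+$. The first step is to check that $u_t$ is an admissible competitor whose penalization does not exceed that of $\ue$. Since $\phi$, extended by zero, is supported in $B_r(x_0)$, we have $u_t=\ue$ outside $B_r(x_0)$; and for $x\in B_r(x_0)$ with $\ue(x)=0$ the membership $v\in\C_{B_r(x_0)}$ forces $v(x)=0$, hence $u_t(x)=0$. Therefore $\O(u_t)\subset\O(\ue)$, which both guarantees $u_t\in H^{2,2}_0(B)$ (it vanishes together with its gradient wherever $\ue$ does, in particular outside $B$) and gives $\abs{\O(u_t)}\leq\abs{\O(\ue)}$; as $\fe$ is nondecreasing, $\fe(\abs{\O(u_t)})\leq\fe(\abs{\O(\ue)})$.

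Next I would invoke the minimality $\Je(\ue)\leq\Je(u_t)$. The penalty terms then cancel in the favourable direction and leave $\J(\ue)\leq\J(u_t)$ for all small $t>0$; note $\norm{\grad u_t}_{L^2(B)}\to1$ as $t\to0^+$, so the quotient $\J(u_t)$ is well defined for small $t$. Using the normalization $\norm{\grad\ue}_{L^2(B)}=1$, so that $\Le=\int_B\abs{\lap\ue}^2dx$, and expanding the equivalent inequality $\Le\,\norm{\grad u_t}^2_{L^2(B)}\leq\norm{\lap u_t}^2_{L^2(B)}$ in powers of $t$, the constant terms cancel and we are left with
\[
 2t\,\Le\!\int_B\grad\ue.\grad\phi\,dx + t^2\Le\!\int_B\abs{\grad\phi}^2dx \;\leq\; 2t\!\int_B\lap\ue\,\lap\phi\,dx + t^2\!\int_B\abs{\lap\phi}^2dx .
\]
Dividing by $2t$ and letting $t\to0^+$ yields $\Le\int_B\grad\ue.\grad\phi\,dx\leq\int_B\lap\ue\,\lap\phi\,dx$.

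It remains to rewrite this as the asserted inequality. Since $\phi\in H^{2,2}_0(B_r(x_0))$, an integration by parts (all boundary terms vanishing) gives $\int_B\grad\ue.\grad\phi\,dx=-\int_{B_r(x_0)}\ue\,\lap\phi\,dx$, and every integral above is in fact an integral over $B_r(x_0)$. Substituting, the inequality becomes $\int_{B_r(x_0)}(\lap\ue+\Le\ue)\,\lap\phi\,dx\geq0$, that is $\int_{B_r(x_0)}\Ue\,\lap(v-\ue)\,dx\geq0$, which is exactly the claim after multiplication by $-1$. The only genuinely delicate point is the first step: the comparison class $\C_{B_r(x_0)}$ is designed precisely so that moving from $\ue$ towards $v$ cannot enlarge $\O(\ue)$, and it is this feature that makes the monotone --- but not strictly monotone --- penalization term $\fe$ harmless in the energy comparison; the rest is the routine one-sided first-variation computation for a Rayleigh-type quotient.
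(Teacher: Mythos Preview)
Your proof is correct and follows essentially the same route as the paper: test the minimality of $\ue$ against convex combinations $t\ue+(1-t)v$ (which stay in the class because $\O$ cannot grow, so the monotone penalty does no harm), and read off the one-sided first variation. The paper merely packages the comparison through an auxiliary local energy $\E(v)=\int|\lap v|^2-\Le\int|\grad v|^2$ before differentiating, but the computation and the integration by parts are identical; one cosmetic remark: your justification of $u_t\in H^{2,2}_0(B)$ should simply cite that $\phi=v-\ue\in H^{2,2}_0(B_r(x_0))$ extends by zero to $H^{2,2}_0(B)$, rather than appealing to $\O(u_t)\subset\O(\ue)$, which by itself does not control traces of the gradient.
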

\begin{proof}
   Suppose $x_0 \in\overline{B}$ and $r>0$. 
  We consider the functional $\E : \C_{B_r(x_0)} \to\R$ given by
\[
 \E(v) := \int\limits_{B_r(x_0)\cap B}\abs{\lap v}^2dx - \Le \int\limits_{B_r(x_0)\cap B}\abs{\grad v}^2dx
\]
and minimize $\E$ in $\C_{B_r(x_0)}$. Without loss of generality, we assume that $\overline{B_r(x_0)}$ intersects $\partial\Omega(\ue)$. Otherwise the claim is obvious since $\Ue$ is harmonic in $\O(\ue)$.
 Let $v\in\C_{B_r(x_0)}$ be arbitrary. We define 
\[
 \hat{v} = \begin{cases}
  v, & \text{  in } B_r(x_0)\cap B\\
 \ue, & \text{  in } B\setminus B_r(x_0)
\end{cases}\,.
\] 
Note that $\hat{v}\in H^{2,2}_0(B)$, $ \O(\hat{v}) = [\O(\ue) \setminus (\O(\ue)\cap B_r(x_0)) ]\,\dot{\cup}\, \O(v)$ and $\abs{\O(\hat{v})} \leq \abs{\O(\ue)}$.
 Thus, we obtain
\[
  \Le \int\limits_B\abs{\grad\hat{v}}^2dx \leq \int\limits_B\abs{\lap \hat{v}}^2dx \; \aq \; \E(\ue) \leq \E(v)\,.
\]
Therefore, $\ue$ minimizes $\E$ in $\C_{B_r(x_0)}$.
Since $\C_{B_r(x_0)}$ is convex, for each $v\in\C_{B_r(x_0)}$ and every $t \in [0,1]$ there holds
\[
  \min_{v\in\C_{B_r(x_0)}}\E(v)= \E(\ue) \leq \E(t\ue+(1-t)v)\,.
\]
This implies
\begin{align*}
 \frac{d}{dt}\Bigg{|}_{t=1} \E(t\ue+(1-t)v) &\leq 0 
\aq\; \int\limits_{B_r(x_0)}\Ue \lap(\ue-v)\,dx \leq 0\,.
\end{align*}
\end{proof}
Now consider a ball $B_r(x_0)$, which intersects the free boundary. We will prove that the mean-value of $\abs{\lap \ue}^2$ over this ball is bounded independently of $x_0$, $r$ and $\eps$. This boundedness will be the essential observation in proving the $C^{1,1}$ regularity of a minimizer $\ue$. 
For the time being, we choose $x_0 \in B$ close to the free boundary. By 'close' we mean $\dist(x_0,\partial\Omega(\ue))<\frac{1}{4}$ in this context. Furthermore, we consider $r>0$ such that $\overline{B_r(x_0)}$ intersects the free boundary. Thus, there holds  $0<r<\frac{1}{4}$. Now we set
\begin{equation}\label{eq:a_r}
 \alpha_r := \frac{\ln(r)-\ln(1-r)}{\ln(r)}\,.
\end{equation}
Note that there holds  $\alpha_r \in \left(\frac{\ln(3)}{2\ln(2)},1\right)$ and $r^{1-\alpha_r} = 1-r$.
The following technical lemma is cited from \cite{GG1982}.
\begin{lemma}\label{iteration-lemma}
Let $f(t)$ be a nonnegative bounded function defined for $0\leq T_0\leq t \leq T_1$. Suppose that for $T_0 \leq t < s \leq T_1$ we have
\[
  f(t) \leq A(s-t)^{-l} + B + \theta f(s) ,
\]
where $A, B \mbox{ and }\theta$ are nonnegative constants and $\theta < 1$. Then there exists a constant $\gamma>1$, depending on $l$ and $\theta$ such that for every $\rho, R$, $T_0\leq\rho<R\leq T_1$ we have
\[
  f(\rho) \leq \gamma \left(A (R-\rho)^{-l}+B\right)\,.
\]  
\end{lemma}
\begin{theorem}\label{theorem:mw_bound}
Let $\ue$ be a solution of the problem \eqref{Pe}.
Suppose $x_0 \in\overline{B}$ and $0<r<\frac{1}{4}$ such that $\overline{B_r(x_0)}\cap \partial\Omega(\ue)\neq \emptyset$. Then there exists a constant $M_0>0$ such that there holds
 \[
  \fint\limits_{B_\frac{r}{2}(x_0)}\abs{\lap \ue}^2dx \leq M_0\,.
 \]
The constant $M_0$ depends on $n$ and $\omega_0$, but in particular, not on $x_0$, $r$ or $\eps$.  
\end{theorem}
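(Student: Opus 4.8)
The plan is to establish the mean-value bound by a Caccioppoli-type estimate combined with the iteration Lemma~\ref{iteration-lemma}, exploiting Theorem~\ref{theorem:eli} with a carefully chosen comparison function. The key point is that $\Ue = \lap\ue + \Le\ue$ is harmonic in $\Omega(\ue)$ and vanishes (together with $\lap\ue$, since $\ue=0$ there) along the free boundary $\Geo$. First I would fix $x_0$ and $r$ as in the statement, and for $\frac r2 \le t < s \le r$ introduce a cut-off function $\eta \in C^\infty_c(B_s(x_0))$ with $\eta \equiv 1$ on $B_t(x_0)$, $0\le\eta\le1$, and $\abs{\grad\eta}\le C/(s-t)$, $\abs{D^2\eta}\le C/(s-t)^2$. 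The natural test function is $v = \ue - \eta^2 \ue$ (or a suitable variant, e.g. $v=\ue-\eta^4\ue$ to absorb derivative losses), which lies in $\C_{B_r(x_0)}$ since $\O(v)\cap B_r(x_0)\subset\O(\ue)\cap B_r(x_0)$ — multiplying $\ue$ by a nonnegative factor cannot enlarge its support. Plugging $\ue - v = \eta^2\ue$ into the variational inequality $\int_{B_r(x_0)} \Ue\,\lap(\ue-v)\,dx \le 0$ gives
\[
  \int_{B_r(x_0)} \Ue\,\lap(\eta^2\ue)\,dx \le 0.
\]

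Next I would expand $\lap(\eta^2\ue) = \eta^2\lap\ue + 4\eta\,\grad\eta\cdot\grad\ue + (2\eta\lap\eta + 2\abs{\grad\eta}^2)\ue$ and substitute $\lap\ue = \Ue - \Le\ue$. The leading term produces $\int \eta^2 \abs{\Ue}^2\,dx$ (after writing $\lap\ue = \Ue - \Le\ue$ once more), and everything else is either lower-order in $\ue$, $\grad\ue$ — all controlled by the already-established $C^{1,\alpha}(\overline B)$ bounds of Theorem~\ref{reg_1} and Remark~\ref{est_Le}, which are uniform in $\eps$ — or a cross term of the form $\int \eta\,\grad\eta\cdot\grad\ue\,\Ue\,dx$ that must be absorbed. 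Here I would use Young's inequality: $\abs{\int \eta\,\grad\eta\cdot\grad\ue\,\Ue} \le \delta\int\eta^2\abs{\Ue}^2 + C_\delta (s-t)^{-2}\int_{B_s} \abs{\grad\ue}^2$, and the last integral is bounded by $C\,r^n$ using the $C^{0,\alpha}$-bound on $\grad\ue$ together with the fact that $\grad\ue$ vanishes on $\Geo\cap\overline{B_r(x_0)}\neq\emptyset$ — so on $B_r(x_0)$ one has $\abs{\grad\ue}\le C\,r^\alpha$ with $C$ independent of $\eps$ (this is the uniform-in-$\alpha$, uniform-in-$\eps$ Hölder bound noted after Remark~\ref{est_Le}). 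The outcome is an inequality of the shape
\[
  \int_{B_t(x_0)} \abs{\Ue}^2\,dx \;\le\; \tfrac12 \int_{B_s(x_0)}\abs{\Ue}^2\,dx \;+\; \frac{C}{(s-t)^2}\,r^{n+2\alpha} \;+\; C\,r^n,
\]
with $C = C(n,\omega_0)$; choosing $\alpha$ close to $1$ makes the error terms of order $r^{n}$ up to the $(s-t)^{-2}$ weight. Applying Lemma~\ref{iteration-lemma} with $f(t) = \int_{B_t(x_0)}\abs{\Ue}^2$, $l=2$, $\theta=\tfrac12$, between $\rho = \tfrac r2$ and $R=r$ then yields $\int_{B_{r/2}(x_0)}\abs{\Ue}^2\,dx \le C(n,\omega_0)\,r^n$, i.e. the mean value of $\abs{\Ue}^2$ over $B_{r/2}(x_0)$ is bounded by a constant $M_0(n,\omega_0)$. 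Finally, since $\lap\ue = \Ue - \Le\ue$ and both $\Le$ (by Remark~\ref{est_Le}) and $\norm{\ue}_{L^\infty(B)}$ (by the uniform $C^{1,\alpha}$ bound) are controlled independently of $\eps$, the mean value of $\abs{\lap\ue}^2$ over $B_{r/2}(x_0)$ is bounded by a constant depending only on $n$ and $\omega_0$, which is the assertion.

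The main obstacle I anticipate is the bookkeeping around the test function: one must verify that $v = \ue - \eta^2\ue$ genuinely lies in $\C_{B_r(x_0)}$ (support containment is the easy half; that $v-\ue \in H^{2,2}_0(B_r(x_0))$ requires $\eta$ compactly supported there, which forces the two-radius set-up $t<s\le r$ rather than testing directly on $B_r$), and that after expanding $\lap(\eta^2\ue)$ every term not of the form $\eta^2\abs{\Ue}^2$ is either genuinely lower order or absorbable with a small constant. A secondary subtlety is that $\overline{B_r(x_0)}$ may stick out of $B$; but Theorem~\ref{theorem:eli} is stated for $B_r(x_0)\subset\R^n$ with $x_0\in\overline B$ and the energy comparison there already accounts for intersecting with $B$, so no extra work is needed — one simply reads all integrals as over $B_r(x_0)\cap B$ and uses that $\ue$ and its derivatives extend Hölder-continuously to $\overline B$. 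I would also double-check that the constant $\gamma$ from Lemma~\ref{iteration-lemma} depends only on $l$ and $\theta$, hence is universal, so that no hidden $\eps$- or $r$-dependence creeps in through the iteration.
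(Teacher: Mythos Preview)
Your proposal is correct and follows the same overall strategy as the paper: plug a cut-off comparison function into the variational inequality of Theorem~\ref{theorem:eli}, extract a Caccioppoli-type estimate, invoke the uniform-in-$\alpha$ H\"older bound on $\nabla\ue$ (and on $\ue$) near a free-boundary point, and close with the iteration Lemma~\ref{iteration-lemma}.

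The main difference is the test function. The paper uses the more elaborate two-cutoff construction $v=\eta(1-r\mu)\ue$ and pairs it with the explicit exponent $\alpha_r$ from~\eqref{eq:a_r}, arranged so that the factor $(1-r)$ and $r^{\alpha_r}$ combine cleanly; it then reaches the hole-filling inequality $\int_{B_t}\abs{\lap\ue}^2\le\tfrac34\int_{B_s}\abs{\lap\ue}^2+C\abs{B_r}(1+r^4(s-t)^{-4})$ for $\abs{\lap\ue}^2$ directly. Your single-cutoff choice $v=(1-\eta^2)\ue$ (or $(1-\eta^4)\ue$) is simpler and works just as well once you send $\alpha\to1$ at the end using the $\alpha$-independent H\"older constant; working with $\abs{\Ue}^2$ first and converting afterwards is also fine. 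One bookkeeping correction: the terms $\Ue\,\ue(\eta\lap\eta+\abs{\nabla\eta}^2)$ produce, after Young's inequality and the bound $\abs{\ue}\le C\,r^{1+\alpha}$, an error of order $(s-t)^{-4}r^{n+2+2\alpha}$, so the iteration should be run with $l=4$ rather than $l=2$; this does not change the outcome, since after iterating between $r/2$ and $r$ one still obtains $\int_{B_{r/2}}\abs{\Ue}^2\le C\,r^{\,n-2+2\alpha}$ and hence a bounded mean value upon letting $\alpha\to1$.
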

\begin{proof}
 Let $x_0 \in \overline{B}$ and $r\in \left(0,\nicefrac{1}{4}\right)$ such that $\overline{B_r(x_0)}$ intersects  $\partial\Omega(\ue)$.  For the sake of convenience, we consider $x_0=0$. Note that there exists at least one point $\bar{x} \in \overline{B_r(x_0)}\cap\partial\Omega(\ue)$. Now let $\frac{r}{2}\leq t<s\leq r$.  
Consider the smooth  functions 
 $\mu \in C^\infty_0(B_s(0))$ and $\eta \in C^\infty(\R^n)$ with 
\[
  0\leq\mu\leq 1  \quad \text{and} \quad \mu \equiv 1 \text{ in } B_\frac{s+t}{2}(0)
\]
and 
\[
  0\leq\eta\leq 1\,,\; \eta \equiv 1 \text{ in }\R^n\setminus B_\frac{s+t}{2}(0) \quad \text{and} \quad \eta \equiv 0 \text{ in } B_t(0)\,.
\]
Remember that for cut-off functions like these there holds 
\[
 \abs{\partial^{\beta}\eta}\,,  \abs{\partial^{\beta}\mu}\leq \frac{C}{(s-t)^{\abs{\beta}}}
\]
for multi-indices $\beta$ with $0\leq\abs{\beta}\leq2$. The constant $C$ is independent of $s$ or $t$.
For $x \in B_s(0)$ we define $v(x) = \eta(x)(1-r \mu)\ue(x)$. Then $v \in \C_{B_s(0)}$. Therefore, Theorem~\ref{theorem:eli} implies
\[
 \int\limits_{B_s(0)}\Ue \lap(\ue-v)dx \leq 0\,.
\]
Consequently, there holds
\begin{align}
&\int\limits_{B_t}\Ue\lap \ue\,dx \leq -r\int\limits_{B_s\setminus B_\frac{s+t}{2}}\Ue\lap(\ue\,\mu)dx + \int\limits_{B_\frac{s+t}{2}\setminus B_t}\Ue\lap(\ue(\eta(1-r)-1))dx\,. \label{eq:lap_u_est_1}
\end{align}
We estimate the integrals on the right hand side separately. Since $\mu\geq 0$, we obtain for the first one
\begin{align*}
-r\int\limits_{B_s\setminus B_\frac{s+t}{2}}\Ue\lap(\ue\,\mu)dx 
&\leq  -r\int\limits_{B_s\setminus B_\frac{s+t}{2}}\lap \ue (\Le \ue\mu + 2\grad \ue.\grad\mu + \ue\lap\mu)\,dx \\ &\qquad+ \Le r\int\limits_{B_s\setminus B_\frac{s+t}{2}}2\abs{\ue}\abs{\grad \ue .\grad \mu} +\ue^2\abs{\lap\mu}\,dx\,.  
\end{align*} 
Applying Cauchy's inequality, we get
\begin{align*}
&-r\int\limits_{B_s\setminus B_\frac{s+t}{2}}\Ue\lap(\ue\mu)dx \leq  \frac{3}{2}\int\limits_{B_s\setminus B_\frac{s+t}{2}}\abs{\lap \ue}^2dx + 2\Le^2\int\limits_{B_s}\ue^2dx \\ & + 8r^2\int\limits_{B_s\setminus B_\frac{s+t}{2}}\abs{\grad \ue}^2\abs{\grad \mu}^2dx +
2r^2\int\limits_{B_s\setminus B_{\frac{s+t}{2}}}\ue^2\abs{\lap\mu}^2dx
 +C\frac{r^2}{(s-t)^2}\abs{B_s}\,,
\end{align*}
 This leads to
\begin{align*}
-r\int\limits_{B_s\setminus B_\frac{s+t}{2}}\Ue\lap(\ue\mu)dx\leq  \frac{3}{2}\int\limits_{B_s\setminus B_\frac{s+t}{2}}\abs{\lap \ue}^2dx +C  \abs{B_r}\left( 1 +\frac{r^4}{(s-t)^4}\right)\,.
\end{align*}
Next, we estimate the second integral on the right hand side of \eqref{eq:lap_u_est_1}. Since $\eta (1-r) -1\leq 0$, we achieve
\begin{align*}
&\int\limits_{B_\frac{s+t}{2}\setminus B_t}\Ue\lap(\ue(\eta(1-r)-1))dx \\ 
\leq &\int\limits_{B_\frac{s+t}{2}\setminus B_t}\lap \ue \left(2(1-r)\grad \ue.\grad\eta +\Le \ue (\eta(1-r)-1)+(1-r)\ue\lap\eta\right)dx \\ \qquad & + \Le(1-r)\int\limits_{B_\frac{s+t}{2}\setminus B_t}2\ue\grad \ue.\grad\eta + \ue^2\lap\eta\, dx\,.
\end{align*}
Applying Cauchy's inequality once more leads to
\begin{align*}
&\int\limits_{B_\frac{s+t}{2}\setminus B_t}\Ue\lap(\ue(\eta(1-r)-1))dx \\
\leq &\;\frac{3}{2}\int\limits_{B_\frac{s+t}{2}\setminus B_t}\abs{\lap \ue}^2 dx +2\Le^2\int\limits_{B_\frac{s+t}{2}\setminus B_t}\ue^2dx + 8(1-r)^2\int\limits_{B_\frac{s+t}{2}\setminus B_t}\abs{\grad \ue}^2\abs{\grad \eta}^2dx 
\\&+2(1-r)^2\int\limits_{B_\frac{s+t}{2}\setminus B_t}\ue^2\abs{\lap\eta}^2dx+ 2\Le(1-r)\int\limits_{B_\frac{s+t}{2}\setminus B_t}\ue\grad \ue.\grad\eta\, dx\,.
\end{align*}
Since $\grad \ue$ is $\alpha_r$-Hölder continuous, \eqref{eq:a_r} yields
\begin{align*}
\int\limits_{B_\frac{s+t}{2}\setminus B_t}\hspace{-2mm}\Ue\lap(\ue(\eta(1-r)-1))dx 
\leq  \frac{3}{2}\hspace{-2mm}\int\limits_{B_\frac{s+t}{2}\setminus B_t}\hspace{-2mm}\abs{\lap \ue}^2 dx +C\abs{B_r} \left(1+\frac{r^4}{(s-t)^4}\right).
\end{align*} Now we go back to \eqref{eq:lap_u_est_1} and obtain
\[
 \int\limits_{B_t}\abs{\lap \ue}^2 dx \leq  \frac{3}{2}\int\limits_{B_s\setminus B_t}\abs{\lap \ue}^2dx -\Le\int\limits_{B_t}\ue\lap \ue\,dx + C\abs{B_r} \left(1+\frac{r^4}{(s-t)^4}\right),
\]
Again, we apply Cauchy's inequality and achieve
\[
\int\limits_{B_t}\abs{\lap \ue}^2\,dx \leq 3\int\limits_{B_s\setminus B_t}\abs{\lap \ue}^2 dx + C\abs{B_r}\left(1+\frac{r^4}{(s-t)^4}\right).
\]
Now we add three times the left hand side of the above inequality to both sides of the inequality. This 'fills the hole' in the domain over which the integral on the right hand side is taken. We obtain
\[
\int\limits_{B_t}\abs{\lap \ue}^2\,dx \leq \frac{3}{4}\int\limits_{B_s}\abs{\lap \ue}^2 dx + C\abs{B_r}\left(1+\frac{r^4}{(s-t)^4}\right).
\]
Thus, we may apply Lemma \ref{iteration-lemma} and  obtain
\[
\int\limits_{B_\frac{r}{2}}\abs{\lap \ue}^2dx \leq C(n,\omega_0)\abs{B_r}\,.
\]
Note that the constant $C$ is independent of $x_0$, $r$ and, in particular, of $\eps$. It only depends on $n$ and $\omega_0$.
\end{proof}
Theorem \ref{theorem:mw_bound} allows  to show that $\lap \ue$ is  bounded almost everywhere in $B$. 
To prove this, we set
\begin{equation}\label{eq:Omega_ast}
\Omega^\ast(\ue):= \left\{ x \in \Omega(\ue): \dist(x,\partial\Omega(\ue))<\frac{1}{8}\right\}.  
\end{equation}
and divide the closure of $\Omega(\ue)$ in three parts (provided that $\Omega^\ast(\ue) \neq \Omega(\ue)$): 
\begin{enumerate}
 \item the inner part $\Omega(\ue)\setminus \Omega^\ast(\ue)$, in which the distance of each point to the free boundary is 'large',
 \item the inner neighbourhood $\Omega^\ast(\ue)$ of the free boundary, which contains points with sufficiently small distance to the free boundary,
 \item the free boundary itself.
\end{enumerate}
In each of this sets we establish a bound on $\lap\ue$. 
If $\Omega^\ast(\ue) = \Omega(\ue)$, it suffices to consider $\Omega(\ue)$ and the free boundary. In this case, a separate analysis of an inner part is not necessary (see Remark~\ref{remark:3parts}).
Establishing a separate bound on the free boundary is necessary since we still do not know whether the free boundary is a nullset with respect to the $n$-dimensional Lebesgue measure or not.
Due to well-known results in the theory of Partial Differential Equations, we get an inner bound for the second order derivatives of $\ue$, which is independent of the parameter $\eps$.
 \begin{lemma}\label{la:inner_bound}
  There exists a constant $M_1=M_1(n,\omega_0)>0$ such that for each minimizer $\ue$ there holds 
  \[
   \abs{D^2\ue(x)}\leq M_1 \; \text{ for each }x \in \Omega(\ue)\setminus \Omega^\ast(\ue).
  \]
 \end{lemma}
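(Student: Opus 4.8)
The plan is to exploit the fact that on the inner part $\Omega(\ue)\setminus\Omega^\ast(\ue)$ every point has distance at least $\nicefrac{1}{8}$ from the free boundary, so that a ball of fixed radius around it stays inside $\Omega(\ue)$, where $\ue$ solves the biharmonic-type equation \eqref{eq:var_eq} and is therefore smooth. The main point is to make the resulting interior estimate quantitative and $\eps$-independent. First I would fix $x\in\Omega(\ue)\setminus\Omega^\ast(\ue)$ and set $\rho=\nicefrac{1}{16}$, so that $B_\rho(x)\subset\Omega(\ue)$ and in fact $B_{2\rho}(x)\subset\Omega(\ue)\cap B$. On this ball $\ue$ satisfies $\bilap\ue=-\Le\lap\ue$; writing $\Ue=\lap\ue+\Le\ue$ this reads $\lap\Ue=0$ in $B_\rho(x)$, i.e.\ $\Ue$ is harmonic there. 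Then interior estimates for harmonic functions bound $\sup_{B_{\rho/2}(x)}\abs{\Ue}$ (and its derivatives) by $C(n,\rho)\,\norm{\Ue}_{L^2(B_\rho(x))}$, and the latter is controlled by $\norm{\lap\ue}_{L^2(B)}+\Le\norm{\ue}_{L^2(B)}$, hence by a constant depending only on $n$ and $\omega_0$ via $\Le\leq\L_{max}$ (Remark~\ref{est_Le}) and the normalization $\norm{\grad\ue}_{L^2(B)}=1$ together with the $C^{1,\alpha}(\overline B)$ bound from Theorem~\ref{reg_1} (which is also $\eps$-independent, as noted after Remark~\ref{est_Le}).

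Once $\Ue$ is bounded on $B_{\rho/2}(x)$ by $C(n,\omega_0)$, the identity $\lap\ue=\Ue-\Le\ue$ shows $\lap\ue$ is bounded there by the same type of constant. Now the task is to upgrade a bound on $\lap\ue$ to a bound on the full Hessian $D^2\ue$. I would do this by standard Calderón–Zygmund / Schauder theory: since $\grad\ue\in C^{0,\alpha}(\overline B)$ with an $\eps$-independent Hölder seminorm, $\Ue=\lap\ue+\Le\ue$ is Hölder continuous on $B_{\rho/2}(x)$ with an $\eps$-independent $C^{0,\alpha}$ norm; viewing $\ue$ as a solution of the Poisson equation $\lap\ue=g$ with $g:=\Ue-\Le\ue\in C^{0,\alpha}$, interior Schauder estimates give $\abs{D^2\ue(x)}\leq C(n,\rho)\big(\norm{g}_{C^{0,\alpha}(B_{\rho/2}(x))}+\norm{\ue}_{L^\infty(B_{\rho/2}(x))}\big)$, and every quantity on the right is bounded by a constant depending only on $n$ and $\omega_0$. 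Setting $M_1$ equal to this constant and noting that $x$ was an arbitrary point of $\Omega(\ue)\setminus\Omega^\ast(\ue)$, and that the radius $\rho$ was chosen once and for all, finishes the argument.

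The only genuinely delicate point is bookkeeping the $\eps$-independence: one must check that every constant that enters — the $L^2$ bound on $\ue$ and $\lap\ue$, the $C^{1,\alpha}$ and $C^{0,\alpha}$ seminorms of $\ue$ and $\grad\ue$, and the coefficient $\Le$ — has already been shown to be controlled by $n$ and $\omega_0$ alone, which is exactly the content of the normalization in Theorem~\ref{existence_sobo}, of Theorem~\ref{reg_1} together with the remark following Remark~\ref{est_Le}, and of Remark~\ref{est_Le}. A minor technical caveat is that near $\partial B$ one should make sure $B_{2\rho}(x)$ still meets $\overline B$ appropriately; but since on the inner part the relevant ball lies inside $\Omega(\ue)\subset B$, no boundary-of-$B$ issue actually arises, and purely interior estimates suffice. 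I expect the harmonic-function/Schauder machinery itself to be entirely routine, so the ``hard part'' here is really just assembling the previously established uniform bounds correctly.
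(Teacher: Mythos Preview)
Your argument is correct and is precisely the ``well-known results in the theory of Partial Differential Equations'' that the paper invokes without further detail: on $\Omega(\ue)\setminus\Omega^\ast(\ue)$ every point carries a ball of fixed radius inside $\Omega(\ue)$, $\Ue$ is harmonic there, and interior elliptic estimates (with constants depending only on $n$, $\omega_0$ via $\Le\le\L_{max}$ and the normalization) yield the bound on $D^2\ue$. One small slip: the H\"older continuity of $\Ue$ on $B_{\rho/2}(x)$ does \emph{not} follow from $\grad\ue\in C^{0,\alpha}$ (that only controls first derivatives, not $\lap\ue$); it follows directly from the harmonicity of $\Ue$ via interior estimates, which you already established---so just cite that instead when you feed $g=\Ue-\Le\ue$ into Schauder.
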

Our next step is establishing a bound on $\lap \ue$ in $\Omega^\ast(\ue)$. Due to Theorem~\ref{theorem:mw_bound}, this bound is independent of  $\eps$, too.
\begin{lemma}\label{tub_bound}
 There exists a constant $M_2=M_2(n,\omega_0)>0$ such that for each minimizer $\ue$ there holds 
 \[
  \abs{\lap \ue(x)} \leq M_2  \text{ for almost every }x \in \Omega^\ast(\ue)\,.
 \]
\end{lemma}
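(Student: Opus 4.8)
\textbf{Proof proposal for Lemma~\ref{tub_bound}.}

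The plan is to combine the mean-value bound of Theorem~\ref{theorem:mw_bound} with interior estimates for the biharmonic operator, exploiting the fact that $\ue$ solves $\bilap\ue+\Le\lap\ue=0$ in $\Omega(\ue)$ and that $\Le\leq\L_{max}$ independently of $\eps$ (Remark~\ref{est_Le}). Fix a point $x\in\Omega^\ast(\ue)$ and set $d:=\dist(x,\partial\Omega(\ue))<\tfrac18$. There are two regimes. If $d$ is comparable to a fixed positive constant (say $d\geq\tfrac1{16}$), the desired bound is already contained in Lemma~\ref{la:inner_bound} after slightly enlarging the inner region, so the only real work is the case $d$ small. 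In that case pick $x_0\in\partial\Omega(\ue)$ with $|x-x_0|=d$ and apply Theorem~\ref{theorem:mw_bound} with this $x_0$ and with $r=4d<\tfrac14$ (which is admissible since $\overline{B_r(x_0)}$ then contains $x_0\in\partial\Omega(\ue)$): this gives
\[
 \fint\limits_{B_{2d}(x_0)}\abs{\lap\ue}^2\,dy\leq M_0,
 \qquad\text{hence}\qquad
 \int\limits_{B_{d}(x)}\abs{\lap\ue}^2\,dy\leq \int\limits_{B_{2d}(x_0)}\abs{\lap\ue}^2\,dy\leq M_0\,\omega_n\,(2d)^n,
\]
because $B_d(x)\subset B_{2d}(x_0)$. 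Thus the $L^2$-mean of $\lap\ue$ over $B_d(x)$ is bounded by a constant depending only on $n$ and $\omega_0$.

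Next I upgrade this $L^2$-mean bound to a pointwise bound. Write $h:=\lap\ue$; then $\lap h=\lap^2\ue=-\Le\lap\ue=-\Le h$ in $\Omega(\ue)\supset B_d(x)$, i.e. $h$ solves the Helmholtz-type equation $\lap h+\Le h=0$ there. Since $\Le$ is bounded above uniformly in $\eps$, standard interior elliptic estimates (e.g. the mean-value/Moser-type estimate, or local $L^\infty$--$L^2$ estimates for $\lap h=-\Le h$ applied on $B_{d/2}(x)\subset B_d(x)$) yield
\[
 \abs{h(x)}^2\;\leq\;\frac{C(n,\L_{max})}{d^n}\int\limits_{B_d(x)}\abs{h}^2\,dy\;\leq\;C(n,\L_{max})\,M_0\,\omega_n\,2^n\;=:\;M_2^2,
\]
where in the last step the factors of $d$ cancel — precisely the scaling matched by Theorem~\ref{theorem:mw_bound}. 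This $M_2$ depends only on $n$, $\omega_0$ (through $M_0$ and through $\L_{max}=(\omega_n/\omega_0)^{2/n}\L_1$), and not on $x$, $d$ or $\eps$. Since $\lap\ue$ is defined almost everywhere in $B$ and $\Omega^\ast(\ue)\subset\Omega(\ue)$ where the equation holds a.e., this is the claimed bound for a.e. $x\in\Omega^\ast(\ue)$.

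The main obstacle is the rigorous justification of the interior estimate for $h=\lap\ue$: a priori $h$ is only an $L^2$ function, not known to be continuous, so "$h$ solves $\lap h+\Le h=0$" must be read in the distributional sense, and one needs the $L^2$-to-$L^\infty$ local regularity for this equation together with a genuinely pointwise conclusion (valid a.e.). This is routine — $h\in W^{2,2}_{loc}(\Omega(\ue))$ by elliptic regularity bootstrapping from $\lap h=-\Le h\in L^2_{loc}$, then Sobolev embedding in dimension $n\leq3$ gives $h\in C^{0}_{loc}$, and the mean-value representation for solutions of $\lap h=-\Le h$ furnishes the explicit constant — but care is required to keep every constant independent of $\eps$, which is guaranteed only because $\Le\leq\L_{max}$. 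A secondary technical point is the book-keeping of the geometry when $d$ is close to $\tfrac18$, so that the ball $B_{4d}(x_0)$ used in Theorem~\ref{theorem:mw_bound} still has radius below $\tfrac14$; shrinking the definition of $\Omega^\ast(\ue)$ or splitting at $d=\tfrac1{16}$ as above handles this cleanly.
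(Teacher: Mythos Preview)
Your argument is correct but follows a different path from the paper's. The paper works not with $h=\lap\ue$ but with $\Ue=\lap\ue+\Le\ue$, which is \emph{exactly} harmonic in $\Omega(\ue)$; it then applies the classical mean value identity $\Ue(x_0)=\fint_{B_d(x_0)}\Ue$ with $d$ just below $\dist(x_0,\partial\Omega(\ue))$, enlarges the ball to radius $d+d^{n+1}$ so that its closure meets $\partial\Omega(\ue)$ and Theorem~\ref{theorem:mw_bound} applies, and controls the annulus correction by H\"older. The bound on $\Ue$ transfers to $\lap\ue$ because $\abs{\Le\ue}$ is uniformly bounded by Sobolev embedding in dimension $n\leq3$. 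Compared with your route, the paper's argument is more elementary: it uses only the mean value property of harmonic functions and avoids any Moser/De Giorgi machinery or the regularity bootstrap you (correctly) flag as the main obstacle. Your approach, on the other hand, is more robust---it would survive replacing the biharmonic structure by a general fourth-order operator for which no exact harmonic combination is available---and your geometric reduction (picking the nearest boundary point and applying Theorem~\ref{theorem:mw_bound} at radius $4d$) is arguably cleaner than the $d\mapsto d+d^{n+1}$ trick.
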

\begin{proof}
  Recall that $\Ue$ is harmonic almost everywhere in $\Omega(\ue)$. Choose $x_0 \in \Omega^\ast(\ue)$ and $\delta>0$ such that
 \[
  \Ue(x_0) = \fint\limits_{B_d(x_0)}\Ue(x)\,dx\,,
 \]
where $d := \dist(x_0,\partial\Omega(\ue))-\delta$. Note that $\dist(x_0,\partial\Omega(\ue))< d+ d^{n+1}<\frac{1}{4}$ for $\delta$ sufficiently small since $x_0 \in \Omega^\ast(\ue)$.
We extend the ball $B_d(x_0)$ to $B_{d+d^{n+1}}(x_0)$ and correct our error immediately. Hence,
\[
 \Ue(x_0) = \frac{1}{\abs{B_d}}\int\limits_{B_{d+d^{n+1}}(x_0)}\Ue(x)\,dx - \frac{1}{\abs{B_d}}\int\limits_{B_{d+d^{n+1}}(x_0)\setminus B_d(x_0)}\Ue(x)\,dx\,.
\]
Now we take absolute values and estimate applying Hölder's inequality. Since $d$ is bounded from above by $\frac{1}{4}$, we obtain
\[
 \abs{\Ue(x_0)} \leq C(n) \sqrt{\fint\limits_{B_{d+d^{n+1}}(x_0)}\abs{\lap \ue}^2dx + C(n,\omega_0)} + C(n,\omega_0)\,.
\]
 Thus, applying Theorem \ref{theorem:mw_bound} yields
 \begin{equation}\label{eq:tub_bound}
  \abs{\Ue(x_0)} \leq C(n,\omega_0,M_0)\leq C(n,\omega_0),
 \end{equation}
where $M_0$ is the constant introduced in Theorem~\ref{theorem:mw_bound}. Since $x_0 \in \Omega^\ast(\ue)$ was chosen arbitrarily, and the constant in \eqref{eq:tub_bound} is independent of $x_0$ and $d_0$, the claim is proven.
\end{proof}
\begin{remark}\label{remark:3parts}
 If $\Omega^\ast(\ue) = \Omega(\ue)$, the previous lemma gives an estimate for each $x \in \Omega(\ue)$. In this case, we could omit Lemma~\ref{la:inner_bound}. In the proof of Lemma~\ref{tub_bound}, we need to control the distance to the free boundary by a fixed quantity. This is why we need a separate analysis of $\Omega(\ue)\setminus\Omega^\ast(\ue)$ if $\Omega^\ast(\ue)\neq\Omega(\ue)$.
\end{remark}
Finally, yet importantly, we need a uniform bound for $\lap \ue$ on the free boundary $\partial\Omega(\ue)$. If the free boundary was a Lebesgue nullset, we obviously would not need this consideration. 
\begin{lemma}\label{b_bound}
 Suppose $\mathcal{L}^n(\partial\Omega(\ue))>0$. Then for almost every $x \in \partial \Omega(\ue)$ there holds
 \[
  \abs{\lap \ue(x)} \leq \sqrt{M_0}\,,
 \]
where $M_0$ is the constant introduced in Theorem~\ref{theorem:mw_bound}.
\end{lemma}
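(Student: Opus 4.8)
The plan is to upgrade the averaged estimate of Theorem~\ref{theorem:mw_bound} to a pointwise bound by invoking the Lebesgue differentiation theorem, exactly as one passes from an $L^1$-mean bound to a bound at Lebesgue points. First I would note that since $\ue \in H^{2,2}_0(B)$ we have $\abs{\lap\ue}^2 \in L^1(B)$, so $\mathcal{L}^n$-almost every point of $B$ is a Lebesgue point of $\abs{\lap\ue}^2$; because $\mathcal{L}^n(\partial B)=0$ and, by assumption, $\mathcal{L}^n(\partial\Omega(\ue))>0$, it follows that $\mathcal{L}^n$-almost every $x \in \partial\Omega(\ue)$ lies in the interior of $B$ and is such a Lebesgue point.

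I would then fix one such point $x \in \partial\Omega(\ue)$ and let $0<r<\frac14$ be arbitrary. Since $x \in \overline{B_r(x)} \cap \partial\Omega(\ue)$, the hypotheses of Theorem~\ref{theorem:mw_bound} hold with $x_0 = x$, which gives
\[
 \fint\limits_{B_{\frac r2}(x)}\abs{\lap\ue}^2\,dy \;\leq\; M_0
\]
with $M_0$ depending only on $n$ and $\omega_0$ and, in particular, independent of $x$, $r$ and $\eps$. Letting $r\to 0$ and using that $x$ is a Lebesgue point of $\abs{\lap\ue}^2$ then yields
\[
 \abs{\lap\ue(x)}^2 \;=\; \lim_{r\to 0}\,\fint\limits_{B_{\frac r2}(x)}\abs{\lap\ue}^2\,dy \;\leq\; M_0,
\]
and hence $\abs{\lap\ue(x)} \leq \sqrt{M_0}$ for $\mathcal{L}^n$-almost every $x \in \partial\Omega(\ue)$, which is the claim.

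The proof is short because the substantial work is already contained in Theorem~\ref{theorem:mw_bound}; the only thing to realise is that \emph{every} point of the free boundary is an admissible centre in that theorem, so the mean-value bound is available on arbitrarily small concentric balls. I do not expect a genuine obstacle here: the single point requiring a line of care is the measure-theoretic bookkeeping, namely that discarding the nullset of non-Lebesgue points still leaves $\mathcal{L}^n$-almost all of $\partial\Omega(\ue)$, which is immediate. Note also that no comparison or maximum principle enters, and the bound inherits from $M_0$ its dependence on $n$ and $\omega_0$ alone.
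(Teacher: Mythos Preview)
Your argument is correct and coincides with the paper's own proof: both observe that $\abs{\lap\ue}^2\in L^1(B)$, apply the Lebesgue differentiation theorem to obtain Lebesgue points in $\partial\Omega(\ue)$, and pass to the limit in the mean-value bound of Theorem~\ref{theorem:mw_bound} centred at such a point. The only difference is cosmetic: you keep track of the half-radius and add the remark on $\partial B$, but the substance is identical.
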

\begin{proof}
 Since $\lap \ue \in L^2(B)$ and we suppose $\mathcal{L}^n(\partial\Omega(\ue))>0$, almost every $x \in \partial \Omega(\ue)$ is a Lebesgue Point of $\abs{\lap \ue}^2$. Hence, Theorem \ref{theorem:mw_bound} implies  
 \[
  \abs{\lap \ue(x)}^2 = \lim_{r\to0}\fint\limits_{B_r(x)}\abs{\lap \ue(y)}^2dy \leq M_0 \quad \mbox{for almost every $x \in \partial\Omega(\ue)$}\,.
 \]
\end{proof}
Joining the Lemmata \ref{la:inner_bound}, \ref{tub_bound} and \ref{b_bound} we achieve the main theorem of this section.
\begin{theorem}\label{lap_u_bounded}
 There exists a constant $M>0$, depending on $n$ and $\omega_0$, such that for each minimizer $\ue$ of  the functional $\Je$ there holds $\norm{\lap \ue}_{L^\infty(B)} \leq M$. 
\end{theorem}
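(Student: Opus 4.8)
The plan is to patch together the three \emph{a priori} bounds obtained in Lemmata~\ref{la:inner_bound}, \ref{tub_bound} and \ref{b_bound}, after observing that, together with the region where $\ue$ vanishes identically, they exhaust $B$ up to a Lebesgue nullset. Concretely, I would decompose
\[
 B \;=\; \bigl(\Omega(\ue)\setminus\Omega^\ast(\ue)\bigr)\,\cup\,\Omega^\ast(\ue)\,\cup\,\partial\Omega(\ue)\,\cup\,\bigl(B\setminus\overline{\Omega(\ue)}\bigr),
\]
where the union is understood modulo $\partial B$, which is a nullset and hence irrelevant for an $L^\infty(B)$ bound. On the open set $B\setminus\overline{\Omega(\ue)}$ one has $B\setminus\overline{\Omega(\ue)}\subset\{\ue=0\}$ by \eqref{eq:ast}, so $\ue\equiv 0$ there and in particular $\lap\ue=0$.

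On the inner part $\Omega(\ue)\setminus\Omega^\ast(\ue)$, Lemma~\ref{la:inner_bound} gives $\abs{D^2\ue(x)}\leq M_1$ for every $x$, whence $\abs{\lap\ue(x)}\leq\sqrt{n}\,M_1$ pointwise. On the inner neighbourhood $\Omega^\ast(\ue)$ of the free boundary, Lemma~\ref{tub_bound} gives $\abs{\lap\ue(x)}\leq M_2$ for almost every $x$. Finally, on $\partial\Omega(\ue)=\Geo$ I would distinguish two cases: if $\mathcal{L}^n(\partial\Omega(\ue))=0$ this piece does not contribute to the essential supremum over $B$ at all; otherwise Lemma~\ref{b_bound} yields $\abs{\lap\ue(x)}\leq\sqrt{M_0}$ for almost every $x\in\partial\Omega(\ue)$. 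Combining the four estimates gives
\[
 \norm{\lap\ue}_{L^\infty(B)}\;\leq\; M \;:=\; \max\bigl\{\sqrt{n}\,M_1,\,M_2,\,\sqrt{M_0}\bigr\},
\]
and since $M_0$ (Theorem~\ref{theorem:mw_bound}), $M_1$ (Lemma~\ref{la:inner_bound}) and $M_2$ (Lemma~\ref{tub_bound}) depend only on $n$ and $\omega_0$ — and none of them on $\eps$ — the constant $M$ has the asserted form. In the degenerate situation $\Omega^\ast(\ue)=\Omega(\ue)$ the inner part is empty and the term $\sqrt{n}\,M_1$ is simply omitted, by Remark~\ref{remark:3parts}.

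There is essentially no obstacle remaining at this point: all the analytic substance is already contained in the three Lemmata, and ultimately in the mean-value estimate of Theorem~\ref{theorem:mw_bound} near the free boundary. The only two points requiring a word of care are that the displayed decomposition of $B$ really is exhaustive up to a nullset (so that the essential supremum over $B$ is controlled by the maximum of the regional bounds), and the case distinction according to whether the free boundary $\Geo$ carries positive Lebesgue measure — which is precisely the reason Lemma~\ref{b_bound} was established. Both are immediate, so the proof is purely a matter of bookkeeping.
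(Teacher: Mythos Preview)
Your proposal is correct and is precisely the paper's approach: the paper simply states that the theorem follows by joining Lemmata~\ref{la:inner_bound}, \ref{tub_bound} and \ref{b_bound}, and you have spelled out exactly that bookkeeping, including the trivial region $B\setminus\overline{\Omega(\ue)}$ and the case distinction on $\mathcal{L}^n(\partial\Omega(\ue))$.
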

To prove the $C^{1,1}$ regularity we show that the second order derivatives of $\ue$ are bounded almost everywhere in $B$. 
We again divide $\overline{\Omega(\ue)}$ in the three parts mentioned in the beginning of the previous  section (provided that $\Omega^\ast(\ue)\neq\Omega(\ue)$). 
Due to Lemma~\ref{la:inner_bound}, we already know that the second order derivatives of $\ue$ are bounded in $\Omega(\ue)\setminus\Omega^\ast(\ue)$.
Using an idea of J.~Frehse in \cite{frehse_73}, we find a uniform bound in the set $\Omega^\ast(\ue)$. In a similar way, L.\,A.~Caffarelli and A.~Friedman argue in \cite{CF_79}. 
Finally, we establish a bound on the second order derivatives on the free boundary. 
Note that the essential device for proving the next lemma is Theorem~\ref{lap_u_bounded}.
\begin{lemma}\label{tub_bound_sod}
  There exists a constant $M_3 = M_3(n,\omega_0)>0$ such that for each minimizer $\ue$ there holds
\[
  \abs{D^2\ue(x)} \leq M_3 \text{ for every } x \in \Omega^\ast(\ue)\,.
\]
\end{lemma}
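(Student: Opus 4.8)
The plan is to reduce the pointwise Hessian bound in $\Omega^\ast(\ue)$ to a quadratic growth estimate for $\ue$ along the free boundary, and then to cash in $\norm{\lap\ue}_{L^\infty(B)}\le M$ (Theorem~\ref{lap_u_bounded}) by splitting $\ue$ into a Newtonian potential plus a harmonic remainder. Fix $x_0\in\Omega^\ast(\ue)$ and put $d:=\dist(x_0,\partial\Omega(\ue))<\frac18$; for definiteness assume $B_{d}(x_0)\subset B$, the case where $x_0$ is close to $\partial B$ being handled analogously via the conditions $\ue=\grad\ue=0$ on $\partial B$. Since $x_0$ lies in the open set $\Omega(\ue)$, $\ue$ is smooth near $x_0$, so the point is only to make the bound uniform in $x_0$ and $\eps$. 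On $B_{d/2}(x_0)\subset\subset\Omega(\ue)$ I would write $\ue=N+H$, where $N$ is the Newtonian potential of $\chi_{B_{d/2}(x_0)}\lap\ue$ (so $\lap N=\lap\ue$ there) and $H:=\ue-N$ is harmonic. One has $\norm{N}_{L^\infty(B_{d/2}(x_0))}\le C(n)M d^2$. Moreover $\bilap\ue+\Le\lap\ue=0$ in $\Omega(\ue)$ shows that $\lap\ue$ solves a Helmholtz equation there, hence is real-analytic with interior estimates; together with $\abs{\lap\ue}\le M$ and $\Le\le\L_{max}$ this yields $[\lap\ue]_{C^{0,1/2}(\overline{B_{d/2}(x_0)})}\le C(n,\omega_0)M d^{-1/2}$, so the classical pointwise Schauder bound for second derivatives of a Newtonian potential gives $\abs{D^2 N(x_0)}\le C(n,\omega_0)$. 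For the harmonic part the interior second-derivative estimate gives $\abs{D^2 H(x_0)}\le C(n)d^{-2}\big(\norm{\ue}_{L^\infty(B_{d/2}(x_0))}+\norm{N}_{L^\infty}\big)$. Everything thus comes down to the quadratic decay $\sup_{B_{d/2}(x_0)}\abs{\ue}\le C(n,\omega_0)d^2$.

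To prove this I would establish, for every $y_0\in\Geo$ and every small $\rho$, the estimate $\sup_{B_\rho(y_0)}\abs{\ue}\le C(n,\omega_0)\rho^2$; the lemma follows because $B_{d/2}(x_0)\subset B_{2d}(y_0)$ for the point $y_0\in\Geo$ closest to $x_0$. Following the idea of J.~Frehse \cite{frehse_73} (see also L.\,A.~Caffarelli and A.~Friedman \cite{CF_79}), split $\ue=N_\rho+H_\rho$ on $B_\rho(y_0)$ with $N_\rho$ the Newtonian potential of $\chi_{B_\rho(y_0)}\lap\ue$ and $H_\rho$ harmonic. Since $\ue(y_0)=0$ and $\grad\ue(y_0)=0$ by the very definition of $\Geo$, and since $\norm{N_\rho}_{L^\infty}\le C M\rho^2$, $\norm{\grad N_\rho}_{L^\infty}\le C M\rho$, the whole first jet of $H_\rho$ at $y_0$ is bounded by $C M\rho^2$ and $C M\rho$. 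Comparing $\sup\abs{\ue}$ on $B_{\theta\rho}(y_0)$ with $\sup\abs{\ue}$ on $B_\rho(y_0)$ via the interior gradient estimate for the harmonic function $H_\rho$ and absorbing the contribution of $N_\rho$ produces, for a suitable $\theta\in(0,1)$, a decay inequality for $\rho\mapsto\sup_{B_\rho(y_0)}\abs{\ue}$ of the kind handled by the iteration Lemma~\ref{iteration-lemma} (or its Giaquinta--Modica variant Lemma~\ref{reg_1_la_2}); this yields the desired quadratic bound, with a constant that is uniform in $\eps$ since it only involves $M$, $\L_{max}$ and the Sobolev bound $\norm{\ue}_{L^\infty(B)}\le C(\L_{max})$.

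Plugging the quadratic decay back into the decomposition of the first step gives $\abs{D^2 H(x_0)}\le C(n)d^{-2}\big(C(n,\omega_0)d^2+C(n)M d^2\big)\le C(n,\omega_0)$, and therefore
\[
  \abs{D^2\ue(x_0)}\le\abs{D^2 N(x_0)}+\abs{D^2 H(x_0)}\le C(n,\omega_0)=:M_3 ,
\]
which is the assertion; the constant depends on $n$ and $\omega_0$ only, not on $\eps$, $x_0$ or $d$.

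The main obstacle is precisely the quadratic decay $\sup_{B_\rho(y_0)}\abs{\ue}\le C\rho^2$. It does \emph{not} follow from the $C^{1,\alpha}$ regularity of Theorem~\ref{reg_1}: that regularity only gives $\abs{\ue(x)}\le C_\alpha\dist(x,\Geo)^{1+\alpha}$ with $C_\alpha\to\infty$ as $\alpha\to1$, which is exactly too weak to be inserted into the Hessian estimate. Attaining the borderline exponent $2$ --- equivalently, the linear non-degeneration of $\grad\ue$ at $\Geo$ --- is the delicate free-boundary $C^{1,1}$ statement, and it is here that the Frehse / Caffarelli--Friedman comparison technique (comparison with (poly)harmonic functions on balls touching the free boundary, together with the double boundary condition $\ue=\grad\ue=0$ on $\Geo$ and, crucially, with $\norm{\lap\ue}_{L^\infty(B)}\le M$) must be carried through carefully so that the iteration closes at exponent $2$ rather than $2-\delta$.
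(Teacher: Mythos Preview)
Your reduction to the quadratic decay
\(
  \sup_{B_\rho(y_0)}|\ue|\le C(n,\omega_0)\rho^2
\)
for every $y_0\in\Geo$ is correct in that this estimate together with your Schauder bound on $D^2N$ would indeed give the lemma. The gap is in the proof of the quadratic decay itself. The iteration you sketch, based on the Newtonian split $\ue=N_\rho+H_\rho$ and the interior derivative estimates for the harmonic part $H_\rho$, yields a relation of the form $S(\theta\rho)\le C_n\theta^2\,S(\rho)+CM\rho^2$ with $S(\rho)=\sup_{B_\rho(y_0)}|\ue|$; iterating this gives $S(\rho)\le C\rho^{\,2-\delta}$ only, because the dimensional constant $C_n$ in the harmonic Hessian estimate is in general $\ge 1$, so one cannot force $C_n\theta^2<\theta^2$. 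This is not a technicality that ``carrying through carefully'' can fix: the hypotheses you invoke --- $\ue\in H^{2,2}$, $\norm{\lap\ue}_{L^\infty}\le M$, and $\ue(y_0)=|\nabla\ue(y_0)|=0$ --- are genuinely insufficient for the quadratic bound. In $\R^2$ the function $u(x)=x_1x_2\log\log(1/|x|)$ lies in $H^{2,2}_{loc}$, has $u(0)=|\nabla u(0)|=0$ and bounded Laplacian, yet $|u(x)|/|x|^2\to\infty$ and $D^2u$ is unbounded at the origin. Since your ball $B_\rho(y_0)$ straddles the free boundary, you cannot use the equation $\bilap\ue+\Le\lap\ue=0$ there, and without it no further information is available. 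Thus the quadratic decay you need is essentially equivalent to the $C^{1,1}$ statement you are trying to prove.

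The paper avoids this circularity by a genuinely different mechanism: it never leaves $\Omega(\ue)$. One takes $r=\tfrac12\dist(x_0,\partial\Omega(\ue))$, writes the \emph{biharmonic} representation $\ue(x)=\int_{B_r(x_0)}G_n(x-y)\,\bilap(\ue\zeta)\,dy$ with a carefully designed cut-off $\zeta=\eta(1-r\mu)$, and applies the operator $L_{kl}=\partial_k\partial_l-\tfrac12\delta_{kl}\lap$. The point of Frehse's trick is that $L_{kl}G_n(z)=c(n)\,z_kz_l/|z|^n$ is homogeneous of degree~$0$, hence \emph{bounded}; combined with $\bilap\ue=-\Le\lap\ue$ and $|\lap\ue|\le M$ inside $B_r(x_0)\subset\Omega(\ue)$, the resulting integrals are bounded by $C(n,\omega_0)$ uniformly in $r$. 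Since $\partial_k\partial_l\ue=L_{kl}\ue+\tfrac12\delta_{kl}\lap\ue$ and $|\lap\ue|\le M$, the Hessian bound follows without ever appealing to a quadratic decay of $\ue$ at $\Geo$. If you wish to salvage your outline, you would need to replace the Newtonian/harmonic split on balls crossing $\Geo$ by this biharmonic representation on balls contained in $\Omega(\ue)$.
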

\begin{proof}
 Let $G_n: \R^n \to \R$ be the biharmonic fundamental solution, i.e.
\begin{equation}\notag
  G_n(x) := \begin{cases}
 \abs{x}^2\left(\ln(x)-1\right), & n=2  \\
 -\abs{x}, & n=3
\end{cases}\,. 
\end{equation}
Now we choose $x_0 \in \Omega^\ast(\ue)$ and set $r:=  \frac{1}{2}\dist(x_0,\partial\Omega(\ue))$.
We consider the cut-off functions $\eta \in C^\infty_0(B_r(x_0))$ and $\mu \in C^\infty(B_r(x_0))$ satisfying 
\[\begin{split}
  &0\leq \eta, \mu \leq 1\,,\quad \eta \equiv 1 \text{ in } B_\frac{r}{2}(x_0), \\ 
& \mu \equiv 1 \text{ in } B_r\setminus B_\frac{r}{2}(x_0),\quad \text{ and } \mu \equiv 0 \text{ in } B_\frac{r}{4}(x_0)\,.
\end{split}
\]
Setting $\zeta := \eta(1-r\mu)$, for each $x \in B_\frac{r}{8}(x_0)$ there holds
\begin{equation}\label{eq:sod_1}
  \ue(x) = \int\limits_{B_r(x_0)}G_n(x-y)\bilap(\ue\zeta)(y)\,dy \,. 
\end{equation}
For $1\leq k,l\leq n$ we define 
\[
  L_{kl} := \partial_{x_k}\partial_{x_l} - \frac{1}{2}\delta_{kl}\lap_x\,.
\] 
Straightforward computation shows that 
\begin{equation}\label{eq:sod_2}
  L_{kl}G_n(x) = c(n)\frac{x_kx_l}{\abs{x}^n} \quad \text{and} \quad \abs{\grad L_{kl}G_n(x)}\leq \frac{c(n)}{\abs{x}^{n-1}}\,.
\end{equation}
We apply the operator $L_{kl}$ on both sides of \eqref{eq:sod_1}. On the right hand side, this yields
\begin{equation}\label{eq:sod_3}
\begin{split}
  \int\limits_{B_r(x_0)}L_{kl}G_n(x-y)\bilap(\ue\zeta)(y)\,dy = \int\limits_{B_\frac{r}{4}(x_0)}L_{kl}G_n(x-y)\bilap \ue(y)dy \\
 +\int\limits_{B_\frac{r}{2}\setminus B_\frac{r}{4}(x_0)}L_{kl}G_n(x-y)\bilap(\ue(y)(1-r\mu(y)))\,dy \\
+(1-r)
\int\limits_{B_r\setminus B_\frac{r}{2}(x_0)}L_{kl}G_n(x-y)\bilap(\ue\eta)(y)\,dy  \,.
\end{split}\end{equation}
Following the lines of Frehse \cite{frehse_73}, we estimate the integrals on the right hand side seperately. Thereby, we make use of the partial differential equation the minimizer $\ue$ satisfies almost everywhere and apply Theorem \ref{lap_u_bounded}. After some straight forward computations we obtain 
\[
 \abs{\int\limits_{B_r(x_0)}L_{kl}G_n(x-y)\bilap(\ue\zeta)(y)dy} \leq C(n,\omega_0).
\]
Let us emphasize once more that the constant $C$ is independent of $l,\,k,\,x,\,x_0$, and $r$. Hence, applying $L_{kl}$ on both sides of \eqref{eq:sod_1} leads to
\[
 \abs{L_{kl} \ue(x)} \leq C(n,\omega_0).
\]
This proves the claim.
\end{proof}
It remains to  establish a bound on the second order derivatives of $\ue$ on the free boundary, provided that $\mathcal{L}^n(\partial\Omega(\ue))>0$. 
For this purpose, we have to show that the mean-value of $\abs{D^2\ue}$  over balls with their centre in  the free boundary is uniformly bounded. This is done using the same construction as in the proof of Theorem \ref{theorem:mw_bound}. Then a Lebesgue Point argument as in Lemma \ref{b_bound} leads to a uniform bound on $\abs{D^2\ue}$ on $\partial\Omega(\ue)$. Thus, we leave the proof to the reader.
\begin{lemma}\label{b_bound_sod1}
 Let $\ue$ be a minimizer of $\Je$. Suppose $x_0\in\partial\Omega(\ue)$ and $r>0$ such that $B_r(x_0)\subset \Omega^{\ast}(\ue)$. Then there exists a constant $M_4 = M_4(n,\omega_0)$ such that 
 \[
  \fint\limits_{B_r(x_0)}\abs{D^2\ue}^2dx \leq M_4.
 \]
\end{lemma}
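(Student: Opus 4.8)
The plan is to follow the construction in the proof of Theorem~\ref{theorem:mw_bound}, but to extract the full Hessian $\abs{D^2\ue}^2$ instead of only $\abs{\lap\ue}^2$, feeding in the a priori bound $\norm{\lap\ue}_{L^\infty(B)}\leq M$ of Theorem~\ref{lap_u_bounded} wherever a Laplacian occurs. Fix $x_0\in\partial\Omega(\ue)$; only small balls matter, and since bounding $\fint_{B_{r/2}(x_0)}\abs{D^2\ue}^2\,dx$ for all small $r$ yields the stated estimate (apply it to $2r$), it is enough to control $\fint_{B_{r/2}(x_0)}$. Translate so that $x_0=0$. Because $x_0\in\partial\Omega(\ue)=\Geo$ we have $\ue(0)=\abs{\grad\ue(0)}=0$. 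Combining this with the Hölder bounds for $\ue$ and $\grad\ue$ from the remark after Theorem~\ref{reg_1} -- which hold with a constant \emph{independent of the exponent} $\alpha$ -- and evaluating them at the exponent $\alpha_r$ of \eqref{eq:a_r} (for which $r^{\alpha_r}=\tfrac{r}{1-r}\leq 2r$), one obtains the decay estimates
\[
 \abs{\grad\ue(x)}\leq C(n,\omega_0)\,r \qquad\text{and}\qquad \abs{\ue(x)}\leq C(n,\omega_0)\,r^2 \qquad\text{for all }x\in\overline{B_r(0)}.
\]
This is the decisive step: a \emph{fixed} exponent $\alpha<1$ would only give $\abs{\grad\ue}=O(r^\alpha)$, leaving an uncontrollable factor $r^{2\alpha-2}$ in the iteration below, whereas the $r$-dependent choice $\alpha=\alpha_r$ turns it into $O(r)$. (Alternatively, the same decay follows from the $C^{1,1}$ bound of Lemma~\ref{tub_bound_sod} on $\Omega^\ast(\ue)$, since $\ue$ and $\grad\ue$ also vanish identically on $B\setminus\Omega(\ue)$.)

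Next, for $\tfrac r2\leq t<s\leq r$ I would take the cut-off functions $\mu\in C^\infty_0(B_s)$ and $\eta\in C^\infty(\R^n)$ from the proof of Theorem~\ref{theorem:mw_bound}, put $v:=\eta(1-r\mu)\ue$, note that $v\in\C_{B_s(0)}$, and apply Theorem~\ref{theorem:eli} to get $\int_{B_s}\Ue\,\lap(\ue-v)\,dx\leq0$. Writing $w:=\ue-v=\psi\ue$ with $\psi:=1-\eta(1-r\mu)$, so that $0\leq\psi\leq1$, $\psi\equiv1$ on $B_t$, and $\partial^\beta\psi$ is supported in $B_s\setminus B_t$ with $\abs{\partial^\beta\psi}\leq C(s-t)^{-\abs{\beta}}$ for $\abs{\beta}\leq2$, and using the identity $\int_{B_s}\lap\ue\,\lap w\,dx=\int_{B_s}\sum_{i,j}\partial_{ij}\ue\,\partial_{ij}w\,dx$ (valid since $w\in H^{2,2}_0(B_s)$), the variational inequality rearranges into
\[
 \int_{B_t}\abs{D^2\ue}^2\,dx \;\leq\; \int_{B_s}\psi\,\abs{D^2\ue}^2\,dx \;\leq\; -\int_{B_s}\sum_{i,j}\partial_{ij}\ue\,\bigl(\partial_{ij}(\psi\ue)-\psi\,\partial_{ij}\ue\bigr)\,dx \;-\;\Le\int_{B_s}\ue\,\lap(\psi\ue)\,dx .
\]
Every term on the right involves only $D^2\ue$, $\grad\ue$, $\ue$ and derivatives of $\psi$; all of them are supported in the annulus $B_s\setminus B_t$ except the single term $-\Le\int_{B_s}\psi\,\ue\,\lap\ue\,dx$, which lives on all of $B_s$ but is harmless because $\abs{\ue}\leq Cr^2$ and $\abs{\lap\ue}\leq M$. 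Applying Cauchy's inequality to absorb a small multiple of $\int_{B_s\setminus B_t}\abs{D^2\ue}^2\,dx$, and using $\abs{\lap\ue}\leq M$ (Theorem~\ref{lap_u_bounded}), $\abs{\Le}\leq\L_{max}$ (Remark~\ref{est_Le}) and the decay estimates on $\overline{B_r(0)}$, one is left with
\[
 \int_{B_t}\abs{D^2\ue}^2\,dx \;\leq\; \tfrac12\int_{B_s\setminus B_t}\abs{D^2\ue}^2\,dx \;+\; C(n,\omega_0)\,\abs{B_r}\Bigl(1+\frac{r^4}{(s-t)^4}\Bigr).
\]

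Now the hole is filled in the usual way -- add $\tfrac12\int_{B_t}\abs{D^2\ue}^2\,dx$ to both sides, so the multiplier of the integral over $B_s$ becomes $\tfrac13<1$ -- and Lemma~\ref{iteration-lemma} (with $l=4$, $\theta=\tfrac13$, on the interval $[\tfrac r2,r]$) gives $\int_{B_{r/2}(0)}\abs{D^2\ue}^2\,dx\leq C(n,\omega_0)\,\abs{B_r}$. Running this with $2r$ in place of $r$ yields $\fint_{B_r(x_0)}\abs{D^2\ue}^2\,dx\leq M_4(n,\omega_0)$, with a constant independent of $x_0$, $r$ and $\eps$, as desired. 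I expect the only genuine obstacle to be the decay estimates of the first paragraph: the $\alpha$-uniform Hölder bound (equivalently, Lemma~\ref{tub_bound_sod} together with the vanishing of $\ue$, $\grad\ue$ on $B\setminus\Omega(\ue)$) is indispensable, since the bare $C^{1,\alpha}$ regularity of Theorem~\ref{reg_1} with a fixed $\alpha<1$ is too weak; everything after that is the routine bookkeeping of Theorem~\ref{theorem:mw_bound}, and a subsequent Lebesgue point argument as in Lemma~\ref{b_bound} then converts this mean value bound into a pointwise bound on $\abs{D^2\ue}$ on $\partial\Omega(\ue)$.
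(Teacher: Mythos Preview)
Your proposal is correct and follows precisely the route the paper indicates: the paper explicitly says ``This is done using the same construction as in the proof of Theorem~\ref{theorem:mw_bound}'' and leaves the details to the reader, and you have supplied exactly those details, including the one genuinely new ingredient---rewriting $\int_{B_s}\lap\ue\,\lap w\,dx$ as $\int_{B_s}\sum_{i,j}\partial_{ij}\ue\,\partial_{ij}w\,dx$ for $w\in H^{2,2}_0(B_s)$ so that the full Hessian appears---together with the decay $\abs{\grad\ue}\leq Cr$, $\abs{\ue}\leq Cr^2$ near a free boundary point (obtained, as you note, either via the $\alpha_r$-trick from Theorem~\ref{theorem:mw_bound} or directly from Lemma~\ref{tub_bound_sod}). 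The hole-filling and the appeal to Lemma~\ref{iteration-lemma} are then identical to Theorem~\ref{theorem:mw_bound}.
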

The boundedness of $\abs{D^2\ue}$ on the free boundary now follows analogously to Lemma~\ref{b_bound}. 
\begin{corollary}\label{b_bound_sod}
Let $\ue$ be a minimizer of the functional $\Je$ and  suppose \\$\mathcal{L}^n(\partial\Omega(\ue))>0$. Then  for almost every $x \in\partial\Omega(\ue)$ there holds $\abs{D^2\ue(x)}\leq \sqrt{M_4}.$
\end{corollary}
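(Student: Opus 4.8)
The plan is to mimic exactly the Lebesgue point argument already used in Lemma~\ref{b_bound}, now applied to the second order derivatives rather than to $\lap\ue$. The key input is Lemma~\ref{b_bound_sod1}, which provides a uniform bound on the mean value $\fint_{B_r(x_0)}\abs{D^2\ue}^2\,dx \le M_4$ for balls centred on the free boundary (and sufficiently small so that $B_r(x_0)\subset\Omega^\ast(\ue)$). Note that each second order partial derivative $\partial_i\partial_j\ue$ lies in $L^2(B)$, since $\ue\in H^{2,2}_0(B)$, so $\abs{D^2\ue}^2 \in L^1(B)$.

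First I would invoke the Lebesgue differentiation theorem: since $\abs{D^2\ue}^2\in L^1(B)$ and we are assuming $\mathcal{L}^n(\partial\Omega(\ue))>0$, almost every point $x\in\partial\Omega(\ue)$ is a Lebesgue point of $\abs{D^2\ue}^2$. For such a point $x$, one has
\[
 \abs{D^2\ue(x)}^2 \;=\; \lim_{r\to0}\fint\limits_{B_r(x)}\abs{D^2\ue(y)}^2\,dy.
\]
For $r$ small enough the ball $B_r(x)$ is contained in $\Omega^\ast(\ue)$ (because $x\in\partial\Omega(\ue)$ and $\Omega^\ast(\ue)$ is a fixed-width inner neighbourhood of the free boundary), so Lemma~\ref{b_bound_sod1} applies with $x_0=x$ and gives $\fint_{B_r(x)}\abs{D^2\ue}^2\,dy \le M_4$ uniformly in $r$. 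Passing to the limit yields $\abs{D^2\ue(x)}^2\le M_4$, i.e. $\abs{D^2\ue(x)}\le\sqrt{M_4}$, for almost every $x\in\partial\Omega(\ue)$. Since $M_4=M_4(n,\omega_0)$ is independent of $\eps$, so is the resulting bound.

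There is essentially no obstacle here: all the real work has been done in Lemma~\ref{b_bound_sod1} (whose proof in turn rests on Theorem~\ref{lap_u_bounded} and the Frehse-type representation used in Lemma~\ref{tub_bound_sod}), and the present corollary is the routine Lebesgue-point extraction already carried out verbatim for $\lap\ue$ in Lemma~\ref{b_bound}. The only minor point to be careful about is ensuring that for a.e.\ free boundary point the small balls $B_r(x)$ genuinely fit inside $\Omega^\ast(\ue)$ so that the hypothesis of Lemma~\ref{b_bound_sod1} is met; this is immediate from the definition \eqref{eq:Omega_ast} of $\Omega^\ast(\ue)$ as the set of points within distance $\tfrac18$ of $\partial\Omega(\ue)$, since a boundary point has distance $0$ to itself. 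Hence the statement follows at once.
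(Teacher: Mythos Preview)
Your argument is correct and is exactly the approach the paper indicates: it explicitly says the corollary ``follows analogously to Lemma~\ref{b_bound}'', i.e.\ by the Lebesgue point argument applied to $\abs{D^2\ue}^2\in L^1(B)$ together with the mean-value bound of Lemma~\ref{b_bound_sod1}. The only cosmetic wrinkle is that $\Omega^\ast(\ue)$ in \eqref{eq:Omega_ast} is literally defined as a subset of $\Omega(\ue)$, so a free-boundary point is not in it; but the intended reading of the hypothesis in Lemma~\ref{b_bound_sod1} is simply that $r$ be small (say $r<\tfrac18$), and with that reading your verification goes through verbatim.
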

The main theorem of this section now follows from Lemma~\ref{la:inner_bound}, Lemma~\ref{tub_bound_sod}, and Corollary~\ref{b_bound_sod}\,.
\begin{theorem}
 Suppose $\ue \in H^{2,2}_0(B)$ is a minimizer of the functional $\Je$. Then $\ue~\in~C^{1,1}(\overline{B})$. 
\end{theorem}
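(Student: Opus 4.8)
The plan is to reduce the claim to a uniform $L^\infty$-bound on the second derivatives of $\ue$ and then to obtain that bound by pasting together the three localized estimates established in this section. By Theorem~\ref{reg_1} we already have $\ue\in C^{1,\alpha}(\overline B)$ for every $\alpha\in[0,1)$; and since $B$ is a ball, hence convex, a function whose distributional second derivatives all lie in $L^\infty(B)$ has Lipschitz continuous first derivatives on $\overline B$, i.e.\ belongs to $C^{1,1}(\overline B)$. Thus it suffices to prove that $\norm{D^2\ue}_{L^\infty(B)}$ is bounded by a constant depending only on $n$ and $\omega_0$.

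First I would dispose of the trivial region: on $B\setminus\overline{\Omega(\ue)}$ one has $\ue\equiv 0$ and $\grad\ue\equiv 0$, so $D^2\ue=0$ there. It remains to bound $D^2\ue$ on the disjoint union $\overline{\Omega(\ue)}=\Omega(\ue)\,\dot{\cup}\,\partial\Omega(\ue)$, which I split into the three pieces used repeatedly above (under the assumption $\Omega^\ast(\ue)\neq\Omega(\ue)$): on the inner part $\Omega(\ue)\setminus\Omega^\ast(\ue)$, where $\ue$ solves the elliptic equation \eqref{eq:var_eq} and is therefore smooth, Lemma~\ref{la:inner_bound} gives $\abs{D^2\ue}\leq M_1$; on the inner neighbourhood $\Omega^\ast(\ue)$ of the free boundary, Lemma~\ref{tub_bound_sod} gives $\abs{D^2\ue}\leq M_3$; and on the free boundary $\partial\Omega(\ue)$ itself, Corollary~\ref{b_bound_sod} gives $\abs{D^2\ue(x)}\leq\sqrt{M_4}$ for almost every $x$ (this set being irrelevant if $\mathcal{L}^n(\partial\Omega(\ue))=0$). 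Taking $M:=\max\{M_1,M_3,\sqrt{M_4}\}$, which depends only on $n$ and $\omega_0$, we get $\abs{D^2\ue}\leq M$ almost everywhere in $B$. If instead $\Omega^\ast(\ue)=\Omega(\ue)$, then by Remark~\ref{remark:3parts} Lemma~\ref{tub_bound_sod} already covers all of $\Omega(\ue)$, and only the free-boundary bound of Corollary~\ref{b_bound_sod} need be added; the conclusion is the same. In every case $D^2\ue\in L^\infty(B)$ with a bound independent of $\eps$, and the theorem follows.

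I do not expect this final assembly to be where the real difficulty lies; the work is concentrated in the ingredient lemmas. The subtle point that makes the three-part split necessary is that at this stage one still does not know that $\partial\Omega(\ue)$ is $\mathcal{L}^n$-null — that is only obtained later, in Lemma~\ref{lem:nullset} — so its contribution cannot simply be discarded. Everything ultimately rests on the scale-invariant mean-value estimate $\fint_{B_{r/2}(x_0)}\abs{\lap\ue}^2\,dx\leq M_0$ of Theorem~\ref{theorem:mw_bound}, with $M_0$ independent of $x_0$, $r$ and $\eps$: it yields the global bound on $\lap\ue$ of Theorem~\ref{lap_u_bounded} that drives the Frehse-type argument in Lemma~\ref{tub_bound_sod}, and, via a Lebesgue-point argument at almost every boundary point, the free-boundary estimate of Corollary~\ref{b_bound_sod}. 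Securing that mean-value estimate with constants independent of $\eps$ is the genuine crux; the present theorem is essentially bookkeeping around it.
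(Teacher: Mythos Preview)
Your proposal is correct and follows exactly the paper's own argument: the theorem is stated to follow immediately from Lemma~\ref{la:inner_bound}, Lemma~\ref{tub_bound_sod}, and Corollary~\ref{b_bound_sod}, which is precisely the three-part decomposition you assemble. Your added explanation of why the free-boundary piece cannot yet be discarded and of the role of Theorem~\ref{theorem:mw_bound} is accurate commentary, but the proof itself is the same bookkeeping.
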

\begin{remark}\label{remark:bound_indep_eps}
Let us emphasize once more that the second order derivatives of $\ue$ are bounded by a constant which is independent of the parameter $\eps$. This is the fundamental observation for proving the equivalence of the penalized and the original problem.
\end{remark}
%%%%
%%%% SECTION 4
\section{Equivalence of the Penalized and the Original Problem}\label{ch:prob_equiv}

In this section, we achieve a critical parameter $\eps_0>0$ such that $\ue$ solves the original problem \eqref{P} if we choose $\eps<\eps_0$. In this way, the problems \eqref{Pe} and \eqref{P} can be treated as equivalent. The uniform bound on the second order derivatives of $\ue$, which we established in the previous section, is crucial for proving the equivalence of the problems \eqref{Pe} and \eqref{P}. 
In the end of this section, we compute the first variation of the functional $\J$.

Following the lines  of T.~Stepanov and P.~Tilli in \cite{SteTil02}, we establish an Euler-type equation for $\ue$. This Euler-type equation helps to quantify the critical value $\eps_0$. 
\begin{lemma}\label{ETE}
  Let $\phi \in H^{1,2}_0(\R^n)\cap H^{2,2}(\R^n)$ and $g \in C^{1,1}(\R)$ with $g(0)=0$. Assume $\ue \in H^{2,2}_0(B)$ is a minimizer of $\Je$. Then there holds
\[
 \int\limits_{B} \lap \ue\,\lap(\phi\,g(\ue)) - \Le \grad \ue.\grad(\phi\,g(\ue))\,dx = 0.
\]
\end{lemma}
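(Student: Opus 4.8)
The plan is to use the minimality of $\ue$ by testing the functional $\Je$ against a one-parameter family of perturbations of the form $\ue + t\,\phi\, g(\ue)$ and differentiating at $t=0$. The key point that makes this admissible is that the perturbation does not enlarge the positivity set: since $g(0)=0$, we have $\phi\, g(\ue) = 0$ wherever $\ue = 0$, so $\O(\ue + t\,\phi\,g(\ue)) \subset \O(\ue)$ for all $t$, at least for small $|t|$ (one must check that the factor $1 + t\,\phi\, g'$-type terms do not vanish, but since $g \in C^{1,1}$ and $\phi \in H^{2,2}$ the perturbation stays controlled and for $|t|$ small enough the zero set is unchanged up to a nullset). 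Consequently the penalization term satisfies $\fe(\abs{\O(\ue + t\,\phi\,g(\ue))}) \leq \fe(\abs{\O(\ue)})$, so $\Je(\ue + t\,\phi\,g(\ue)) \geq \Je(\ue)$ reduces, after dropping the (favourably-signed) penalization contribution, to the statement that $t \mapsto \J(\ue + t\,\phi\,g(\ue))$ has a minimum at $t=0$ \emph{from the side where the quotient is what controls things} — more precisely, writing $w := \phi\,g(\ue)$, minimality of $\ue$ for $\Je$ gives
\[
  \frac{\int_B \abs{\lap(\ue + t w)}^2\,dx}{\int_B \abs{\grad(\ue + t w)}^2\,dx} \;\geq\; \Le
\]
for all sufficiently small $t$ (both signs), because the numerator of $\Je$ can only decrease relative to $\Le$ times the denominator and the penalization only helps. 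Cross-multiplying and using $\int_B \abs{\lap\ue}^2 = \Le \int_B \abs{\grad\ue}^2$, the $O(1)$ terms cancel and we are left with
\[
  2t\left( \int_B \lap\ue\,\lap w - \Le \grad\ue.\grad w\,dx \right) + t^2\left( \int_B \abs{\lap w}^2 - \Le\abs{\grad w}^2\,dx\right) \;\geq\; 0
\]
for all small $t$ of either sign, which forces the coefficient of $t$ to vanish. That coefficient is exactly the claimed identity.

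First I would verify that $w = \phi\, g(\ue) \in H^{2,2}_0(B)$: since $\ue \in C^{1,1}(\overline B) \subset H^{2,2}_0(B) \cap W^{1,\infty}$ and $g \in C^{1,1}(\R)$ with $g(0)=0$, the chain rule gives $g(\ue) \in H^{2,2}(B) \cap W^{1,\infty}(B)$ with $g(\ue) = 0$ on $\partial B$ (as $\ue = 0$ there and $g(0)=0$); multiplying by $\phi \in H^{1,2}_0(\R^n)\cap H^{2,2}(\R^n)$ keeps it in $H^{2,2}_0(B)$ by the product rule for Sobolev functions, using that both factors are bounded with bounded gradients. Here the $C^{1,1}$ regularity of $\ue$ established in the previous section is essential — it is what guarantees $\grad w \in L^\infty$ and $D^2 w \in L^2$, so the integrals in the statement make sense. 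Then I would carefully justify the containment $\O(\ue + t w) \subset \O(\ue)$ up to the small-$t$ threshold and the resulting inequality $\Je(\ue + tw) \geq \Je(\ue)$, and finally expand as above.

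The main obstacle I anticipate is making the variational inequality rigorous in two respects: first, ensuring that for \emph{both} signs of small $t$ the perturbed positivity set is genuinely contained in $\O(\ue)$ (equivalently, that no new zeros with positive measure are created or, more to the point, that $\fe(\abs{\O(\ue+tw)})=\fe(\abs{\O(\ue)})$ or is smaller) — this needs $\abs{\{\ue = 0, w \ne 0\}}$-type considerations together with the fact that on $\{\ue \ne 0\}$ the factor governing the sign of $\ue + tw$ is close to $1$ for $|t|$ small; and second, handling the denominator, namely confirming $\int_B \abs{\grad(\ue + tw)}^2\,dx > 0$ for small $t$ so that the quotient is well-defined — this is immediate since $\grad\ue \not\equiv 0$ and the map $t \mapsto \int_B \abs{\grad(\ue+tw)}^2$ is continuous. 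Once these technical points are settled, the conclusion is just the vanishing of the first-order term in a Taylor expansion, so the substance of the argument is entirely in setting up the admissible perturbation.
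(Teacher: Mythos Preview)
Your proposal is correct and follows exactly the paper's approach: set $\Psi_t := \ue + t\,\phi\,g(\ue)$, observe that $g(0)=0$ forces $\O(\Psi_t)\subset\O(\ue)$ so the penalization term can only decrease, and then the minimality of $\ue$ for $\Je$ yields $\J(\ue)\le\J(\Psi_t)$ for all $t$, whence the first-order coefficient vanishes. One small simplification: the containment $\O(\Psi_t)\subset\O(\ue)$ holds for \emph{all} $t$, not just small $|t|$, since $w=\phi\,g(\ue)$ vanishes identically on $\{\ue=0\}$; your worry about ``new zeros'' is immaterial because only the inequality $\abs{\O(\Psi_t)}\le\abs{\O(\ue)}$ is needed.
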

\begin{proof}
We set $\Psi_\delta(x) := \ue(x) + \delta \, \phi(x)g(\ue(x))$, where $\delta$ is arbitrary. Note that $\Psi_\delta \in H^{2,2}_0(B)$ and there holds 
$\O(\Psi_\delta)\subset\O(\ue) $. This yields 
$f_\epsilon(|\O(\Psi_\delta)|) \leq f_\epsilon(\abs{\O(\ue)})$
and comparing the $\Je$-energies of $\ue$ and $\Psi_\delta$ proves the claim.
\end{proof}
\begin{theorem}\label{theorem:vol_satis}
 Suppose $\ue\in H^{2,2}_0(B)$ is a minimizer of $\Je$. Then there exists an $\epsilon_0>0$ such that for $\epsilon<\epsilon_0$ there holds $\abs{\O(\ue)} =  \omega_0$\,.
\end{theorem}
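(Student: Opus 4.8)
The plan is to argue by contradiction: suppose that for a sequence $\eps_j\to 0$ there are minimizers $u_{\eps_j}$ of $\J_{\eps_j}$ with $|\O(u_{\eps_j})|>\omega_0$ (by Theorem~\ref{supp_bigger} the measure can never fall below $\omega_0$, so strict excess is the only alternative to equality). The strategy, following Stepanov--Tilli, is to use a carefully chosen test function in the Euler-type equation of Lemma~\ref{ETE} to \emph{shrink} the positivity set by a controlled amount while estimating the resulting change in $\J$; if the gain in the penalization term $\fe$ exceeds the loss in the Dirichlet-type quotient, minimality is violated. Concretely, I would pick a point $x_0$ on the free boundary $\Geo$ and, for small $\rho$, choose $\phi$ to be a cutoff supported near $x_0$ together with $g(s)=s$ (or a truncation thereof) so that $\Psi_\delta=\ue+\delta\phi g(\ue)$ has, for suitable $\delta=\delta(\rho)$, strictly smaller positivity set: $|\O(\Psi_\delta)|\le |\O(\ue)| - c\,\rho^n$ for a dimensional constant $c>0$, provided $x_0$ is a density point of $\{\ue=0\}$ in an appropriate sense (here the $C^{1,1}$ bound on $\ue$, uniform in $\eps$ by Remark~\ref{remark:bound_indep_eps}, is what makes $\delta(\rho)$ and the volume decrement controllable).

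The key quantitative step is to bound the change in the numerator and denominator of $\J$. Expanding $\J(\Psi_\delta)-\J(\ue)$ to first order in $\delta$ and using Lemma~\ref{ETE} to kill the linear term, the leading contribution is $O(\delta^2)$ times integrals of $|\lap\ue|^2$, $|\grad\ue|^2$ over $B_\rho(x_0)$; using Theorem~\ref{lap_u_bounded} ($\norm{\lap\ue}_{L^\infty}\le M$) and the $C^{1,1}$ bound, these are $O(\rho^n)$, and one gets $\J(\Psi_\delta)\le \J(\ue)+C\,\delta^2\rho^n$ with $C=C(n,\omega_0)$ independent of $\eps$. Meanwhile $\fe(|\O(\Psi_\delta)|)\le \fe(|\O(\ue)|) - \tfrac{1}{\eps}\,c\,\rho^n$ once $|\O(\ue)|>\omega_0$ and $\rho$ is small enough that the decrement keeps us in the linear branch of $\fe$. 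Comparing, $\J_\eps(\Psi_\delta)-\J_\eps(\ue)\le C\delta^2\rho^n - \tfrac{c}{\eps}\rho^n$, which is negative as soon as $\eps < \eps_0 := c/(C\delta^2)$ — contradicting minimality. The subtlety is that $\delta$ itself may depend on $\rho$; one must check that the scaling works out, i.e. that $\delta(\rho)$ stays bounded (or that the ratio of the quadratic error to the volume decrement stays bounded) as $\rho\to 0$, which is exactly where the uniform second-derivative bound enters.

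I expect the main obstacle to be the construction of the shrinking perturbation with a genuine, quantitatively controlled volume decrement: one needs that near a free boundary point the zero set of $\ue$ occupies a definite proportion of a small ball, so that multiplying $\ue$ by $(1-\delta\phi)$ (or adding $\delta\phi g(\ue)$ with the right sign) actually removes a set of measure $\gtrsim\rho^n$ rather than merely a lower-order set. In the Alt--Caffarelli setting this follows from nondegeneracy of the minimizer, which here is only established later in the paper; so the argument must instead exploit that $|\O(\ue)|>\omega_0$ \emph{strictly} to find a ball $B_\rho(x_0)\subset\O(\ue)$ with $\ue$ small (hence cheaply removable) — essentially shaving mass off the interior near the free boundary rather than off the boundary itself. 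Making that step rigorous, and tracking that the constants $C$, $c$, $\delta$ are all independent of $\eps$ (so that a single threshold $\eps_0$ works for every minimizer), is the crux; everything else is a routine expansion using Lemma~\ref{ETE} and the uniform $C^{1,1}$ bounds.
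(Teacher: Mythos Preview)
Your final paragraph lands on exactly the paper's construction, so the overall strategy is right; but the framing in your first two paragraphs contains a genuine confusion that would derail the argument if executed as written. A perturbation of the form $\Psi_\delta=(1+\delta\phi)\ue$ with $\delta$ small does \emph{not} shrink the support at all: $(1+\delta\phi)>0$ everywhere for $|\delta|<1/\|\phi\|_\infty$, so $\O(\Psi_\delta)=\O(\ue)$ and there is no volume decrement to trade against the penalty. To remove a set of measure $\sim\rho^n$ you must take $\delta$ of order $-1$, i.e.\ set $v=(1-\eta)\ue$ with $\eta\equiv1$ on an inner ball. The error in $\J$ is then controlled not by any smallness of $\delta$ but by the fact that Lemma~\ref{ETE} (with $\phi=\eta$, $g(s)=s$) kills the cross term \emph{exactly}, leaving only $\int|\lap(\eta\ue)|^2-\Le\int|\grad(\eta\ue)|^2$; and this quadratic remainder is $O(\rho^n)$ because $\ue$ and $\grad\ue$ are small near the free boundary via the uniform $C^{1,1}$ bound---not because of any $O(\delta^2)$ Taylor estimate.

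The paper's precise choice (which resolves the obstacle you correctly identify) is: pick $x_\eps\in\O(\ue)$ interior with $r_\eps:=\dist(x_\eps,\partial\O(\ue))$ small enough that $|\O(\ue)|-|B_{r_\eps}(x_\eps)|>\omega_0$, and take $\eta\in C_c^\infty(B_{2r_\eps}(x_\eps))$ with $\eta\equiv1$ on $B_{r_\eps}(x_\eps)$. Then (i) $B_{r_\eps}(x_\eps)\subset\O(\ue)$, so the volume decrement is the \emph{full} $|B_{r_\eps}|$ (no density estimate needed---this is exactly the point of Remark~\ref{remark:why}); and (ii) $\partial B_{r_\eps}(x_\eps)$ touches $\partial\Omega(\ue)$, so the Lipschitz bound on $\grad\ue$ gives $|\grad\ue|\le C r_\eps$ and $|\ue|\le C r_\eps^2$ on $B_{2r_\eps}(x_\eps)$, which together with $\|\lap\ue\|_{L^\infty}\le M$ forces $\int_{B_{2r_\eps}}|\lap(\eta\ue)|^2\le C(n,\omega_0)|B_{r_\eps}|$. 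The comparison $\Je(\ue)\le\Je(v_\eps)$ then yields $\tfrac{1}{2\eps}|B_{r_\eps}|\le C(n,\omega_0)|B_{2r_\eps}|$, which is impossible once $\eps<\eps_0:=(2^{n+1}C)^{-1}$. (Minor slip: for the decrement you need the \emph{nonzero} set, not the zero set, to have definite density in the ball; the interior placement sidesteps this entirely.)
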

\begin{proof}
 From Theorem~\ref{supp_bigger} we know that $\abs{\O(\ue)}\geq\omega_0$ holds for each $\eps>0$. Hence, we need to disprove $\abs{\O(\ue)}>\omega_0$ for $\eps$ sufficiently small. Therefore, let us assume that there is a minimizer $\ue$ with $\abs{\O(\ue)}>\omega_0$. 
Consequently, we can choose an $x_\eps \in \O(\ue)$ such that $r_\eps:= \dist(x_\eps,\partial\O(\ue)) = \dist(x_\eps,\partial\Omega(\ue))$ and 
\[
   \abs{\O(\ue)}-\abs{B_{r_\eps}(x_\eps)} >\omega_0.
\]
We construct a comparison function $v_\eps \in H^{2,2}_0(B)$, which is equal to $\ue$ outside of $B_{2r_\eps}(x_\eps)$ and vanishes in $B_{r_\eps}(x_\eps)$. For this reason,  
 let $\eta \in C^\infty_c(B_{2r_\eps}(x_\eps))$ be a cut-off function with $0\leq \eta\leq 1$ and $\eta \equiv 1 $ in $B_{r_\eps}(x_ \eps)$. Defining $v_\eps := \ue-\eta\ue $, we obtain the desired comparison function. Moreover, we find
\[
 \abs{\O(v_\eps)}=\abs{\O(\ue)}-\abs{B_{r_\eps}(x_\eps)}>\omega_0.
\]
Subsequently, the monotonicity of the penalization term $\fe$ implies
\[
 \fe(\abs{\O(v_\eps)}) - \fe(\abs{\O(\ue)}) = -\frac{1}{\eps}\abs{B_{r_\eps}(x_\eps)}.
\]
Now comparing the $\Je$-energies of $\ue$ and $v_\eps$ and applying Lemma~\ref{ETE} leads to
\begin{equation}\label{eq:prob_0}
 \frac{\abs{B_{r_\eps}(x_\eps)}}{\eps}\int\limits_{B}\abs{\grad v_\eps}^2dx \leq \Le \int\limits_{B_{r_\eps(x_\eps)}}\abs{\grad \ue}^2dx + \int\limits_{B_{2r_\eps}\setminus B_{r_\eps}(x_\eps)}\abs{\lap(\eta\ue)}^2dx.
\end{equation}
In the following, we will show that there exists a constant $C(n,\omega_0)$ such that 
\begin{equation}\label{eq:prob_1}
 \int\limits_{B_{2r_\eps}\setminus B_{r_\eps}(x_\eps)}\abs{\lap(\eta\ue)}^2dx \leq C(n,\omega_0)\abs{B_{2r_\eps}\setminus B_{r_\eps}(x_\eps)}.
\end{equation}
In addition, we prove that 
\begin{equation}\label{eq:prob_2}
\int\limits_B\abs{\grad v_\eps}^2dx \geq \frac{1}{2} \quad  \text{for $r_\eps$ sufficiently small.}
\end{equation}
Combining these two estimates with \eqref{eq:prob_0} yields
\begin{align*}
   \frac{1}{2\eps}\abs{B_{r_{\eps}}(x_\eps)} \leq \Le \int\limits_{B_{r_\eps(x_\eps)}}\abs{\grad \ue}^2dx + C(n,\omega_0) \abs{B_{2r_\eps}\setminus B_{r_\eps}(x_\eps)}.
\end{align*}
Due to the Lipschitz continuity of $\grad \ue$ and Remark~\ref{est_Le} we obtain
\begin{align}\label{eq:equiv}
  \frac{1}{2\eps}\abs{B_{r_{\eps}}(x_\eps)} \leq C(n,\omega_0) \abs{B_{2r_\eps}(x_\eps)}.
\end{align}
Hence, if $\abs{\Omega(\ue)}>\omega_0$, then
\[
  \eps_0:= \frac{1}{C(n,\omega_0)} \leq \eps\,.
\]
In other words, if we choose $\eps<\eps_0$, then \eqref{eq:equiv} cannot hold true and we obtain $\O(\ue)=\omega_0$.
Thus, to finish the proof of this theorem, it remains to establish the estimates \eqref{eq:prob_1} and \eqref{eq:prob_2}.
We start by proving the estimate \eqref{eq:prob_1}. Applying Cauchy's inequality we obtain
\[
  \int\limits_{B_{2r_\eps}\setminus B_{r_\eps}(x_\eps)}\abs{\lap(\eta\ue)}^2dx \leq C  \int\limits_{B_{2r_\eps}\setminus B_{r_\eps}(x_\eps)} \abs{\lap \ue}^2\eta^2 + \abs{\grad\ue}^2\abs{\grad\eta}^2+\ue^2\abs{\lap\eta}^2dx.
\]
Now note that Lemma~\ref{b_bound_sod1}, together with Theorem~\ref{morrey_theorem}, implies
\[
 \sup_{B_{2r_\eps}(x_\eps)}\abs{\grad \ue}\leq C(n,\omega_0,M_4)\,r_\eps
\quad\mbox{and } 
 \sup_{B_{2r_\eps}(x_\eps)}\abs{ \ue}\leq C(n,\omega_0,M_4)\,r_\eps^2.
\]
Furthermore, $ \lap \ue$ is bounded independently of $\eps$.
Subsequently, we obtain
\begin{align*}
 \int\limits_{B_{2r_\eps}\setminus B_{r_\eps}(x_\eps)}\abs{\lap(\eta\ue)}^2dx &\leq C(n,\omega_0)\abs{B_{2r_\eps}(x_\eps)\setminus B_{r_\eps}(x_\eps)}.
\end{align*}
This proves \eqref{eq:prob_1}.
In the same way as above, we estimate 
\[
  \int\limits_B\abs{\grad v_\eps}^2dx \geq 1- C(n,\omega_0) r_\eps^{n+2}.
\]
Thus,  for $r_\eps$ sufficiently small we obtain \eqref{eq:prob_2}.
\end{proof}
\begin{remark}\label{remark:why}
 Note that it is essential to choose an $x_\eps \in \O(\ue)$ instead of $x_\eps \in \partial\O(\ue)$ in the previous proof. If we assumed $x_\eps\in\partial\O(\ue)$, we would not obtain the estimate~(\ref{eq:equiv}), but
 \[
  \frac{1}{2\eps}\abs{B_{r_\eps}(x_\eps)\cap\O(\ue)} \leq C(n,\omega_0)\abs{B_{2r_\eps}(x_\eps)\cap\O(\ue)}.
 \]
However, we do not know if this inequality is contradictory for small $\eps$. If $x_\eps$ is located in a very thin part of $\O(\ue)$, e.\,g. a thin cusp, the above estimate might even be true for every $\eps$. 
In the previous proof, we could avoid this difficulty by choosing an $x_\eps \in \O(\ue)$. Thus, on the left hand side of inequality~(\ref{eq:equiv}) the full measure of the ball $B_{r_\eps}$ occurs. Consequently, the radius $r_\eps$ and the centre $x_\eps$ cancel and the only dependence on $\eps$ is contained in the factor $\nicefrac{1}{2\eps}$. In this way, we obtain the desired contradiction.
\end{remark}
In the sequel, we always consider $\eps<\eps_0$. Consequently, $\O(\ue)$ fulfils the volume condition, i.\,e. $\abs{\O(\ue)}=\abs{\Omega(\ue)}=\omega_0$ and $\ue$ is a solution of the original problem $\eqref{P}$. In this way, we can treat the penalized problem \eqref{Pe} and the original problem \eqref{P} as equivalent. 
For this reason, we omit the index $\eps$ and write $u$ instead of $\ue$ and $\L$ instead of $\Le$. 
\begin{remark}\label{remark:min}
 Note that $u$ minimizes the functional $\J$ not only among all $v\in H^{2,2}_0(B)$ with $\abs{\O(v)}~=~\omega_0$, but among all $v~\in~H^{2,2}_0(B)$ with $\abs{\O(v)}~\leq~ \omega_0$, as well. 
\end{remark}

\begin{remark}
Since
\[
\Le = \L(\Omega(\ue)) = \min\{\J(v): v \in H^{2,2}_0(\Omega(\ue))\},
\]
$\Le$ is the buckling eigenvalue of the domain $\Omega(\ue)$.
\end{remark}

At this point, we can make a first statement regarding the shape of $\Omega(u)$. 
\vspace{.3cm}
\begin{corollary}\label{corollary:connected}
 The optimal domain $\Omega(u)$ 
is connected. 
\end{corollary}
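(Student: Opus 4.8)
The plan is to argue by contradiction: suppose $\Omega(u)$ is disconnected. Since $\Omega(u)$ is open, it decomposes into at least two disjoint nonempty open sets, so we may write $\Omega(u) = A \,\dot\cup\, D$ with $A, D$ open, disjoint, and nonempty. Because $u \in C^{1,1}(\overline B)$ and $u = |\nabla u| = 0$ on $\partial\Omega(u)$, the restrictions $u|_A$ and $u|_D$ (extended by zero) both lie in $H^{2,2}_0(B)$, and $u = u\chi_A + u\chi_D$ with the two summands having disjoint supports. Hence
\[
  \int_B |\lap u|^2\,dx = \int_A |\lap u|^2\,dx + \int_D |\lap u|^2\,dx, \qquad
  \int_B |\grad u|^2\,dx = \int_A |\grad u|^2\,dx + \int_D |\grad u|^2\,dx.
\]
Now I would use a standard Rayleigh-quotient splitting argument: if $a := \int_A|\lap u|^2$, $b := \int_D|\lap u|^2$, $p := \int_A|\grad u|^2$, $q := \int_D|\grad u|^2$, then $\J(u) = (a+b)/(p+q)$, while the two candidate functions $u\chi_A$ and $u\chi_D$ have quotients $a/p$ and $b/q$ (both well-defined, since at least one of $u|_A$, $u|_D$ is not identically zero, and if one of them \emph{is} zero we already have a contradiction with $u$ not vanishing on either piece of $\partial\Omega$ — actually if $u|_D \equiv 0$ then $D$ would not meet $\O(u)$, contradicting $D \subset \Omega(u)$ being built from $\O(u)$). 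The elementary mediant inequality gives $\min\{a/p, b/q\} \leq (a+b)/(p+q)$, so one of the two pieces, say $A$, satisfies $\J(u\chi_A) \leq \J(u)$, with equality only if $a/p = b/q$.

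The key point is then that $u\chi_A$ is a competitor with strictly smaller support: $|\O(u\chi_A)| = |A \cap \O(u)| < |\O(u)| = \omega_0$, since $D \cap \O(u)$ is nonempty and open hence of positive measure. By Remark~\ref{remark:min}, $u$ minimizes $\J$ among all $v \in H^{2,2}_0(B)$ with $|\O(v)| \leq \omega_0$, so $\J(u) \leq \J(u\chi_A)$, forcing $\J(u\chi_A) = \J(u)$, i.e. $u\chi_A$ is also a minimizer with support of measure strictly less than $\omega_0$. But this contradicts Theorem~\ref{supp_bigger}, which asserts that every minimizer of $\Je$ (for $\eps < \eps_0$, hence every minimizer of the original problem via the equivalence) has $|\O(\cdot)| \geq \omega_0$. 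Alternatively, and perhaps more cleanly, one observes that $u\chi_A$ being a minimizer means $\L(\Omega(u\chi_A)) = \Le$ with $|\Omega(u\chi_A)| < \omega_0$; but rescaling $A$ up to volume $\omega_0$ strictly decreases the buckling eigenvalue by the homothety property $t^2\L(tM) = \L(M)$ used in Remark~\ref{est_Le}, contradicting minimality of $\L(\Omega(u))$ among domains of volume $\omega_0$.

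The main obstacle is making precise the claim that both $u\chi_A$ and $u\chi_D$ genuinely lie in $H^{2,2}_0(B)$ and that the energy really splits additively; this is where the $C^{1,1}$ regularity (equivalently, $u = |\nabla u| = 0$ on $\partial\Omega(u)$) is essential, since for a merely $H^{2,2}$ function a restriction to a subdomain need not have zero trace and zero normal derivative. Once the splitting is justified, the rest is the elementary mediant inequality plus an appeal to either Theorem~\ref{supp_bigger} or the strict monotonicity of $\L$ under scaling. A minor technical point to dispatch is the degenerate case where one of $A$, $D$ carries zero Dirichlet energy — but $|\grad u|^2 = 0$ on an open piece of $\Omega(u)$ would force $u$ constant hence zero there (as $u = 0$ on the boundary of that piece), again contradicting that the piece was taken to contain points of $\O(u)$.
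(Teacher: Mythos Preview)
Your argument is correct and takes essentially the same approach as the paper: split $\Omega(u)$ into two disjoint open pieces, use the additive decomposition of the numerator and denominator of $\J$, and obtain a competitor supported on one piece whose Rayleigh quotient is $\leq \L$ but whose support has measure $<\omega_0$, contradicting Theorem~\ref{supp_bigger} (equivalently, the strict scaling monotonicity of $\L$). The only cosmetic difference is that the paper exploits the normalization $\|\nabla u\|_{L^2(B)}=1$ to turn the strict inequality $\J(u_1)>\L$ directly into $\J(u_2)<\L$, whereas you phrase the same step via the mediant inequality.
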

\begin{proof}
 Let us assume that $\Omega(u)$ is the union of two disjoint sets $\Omega_1$
and $\Omega_2$. Let $u$ be the corresponding eigenfunction on $\Omega(u)$. Then according to the previous results there holds $u \in  H^{2,2}_0(\Omega_1)$ and $u\in H^{2,2}_0(\Omega_2)$. Furthermore, $\abs{\Omega_1},\,\abs{\Omega_2}< \omega_0$. Now we define 
\[
 u_1 := \begin{cases}
             u,& \text{in } \Omega_1 \\
             0, &\text{in } B\setminus \Omega_1 
            \end{cases} 
\quad \text {and} \quad 
u_2 := \begin{cases}
             u,& \text{in } \Omega_2 \\
             0, &\text{in } B\setminus \Omega_2 
            \end{cases} \,.
\]
Then $u_1,\,u_2 \in H^{2,2}_0(B)$. Since $\abs{\Omega(u_1)}<\omega_0$, Remark \ref{remark:min} and Theorem \ref{theorem:vol_satis} imply
\[
\L \int\limits_B\abs{\grad u_1}^2dx < \int\limits_{B}\abs{\lap u_1}^2 dx.
\]
Due to the normalization $\norm{\grad u}_{L^2(B)}=1$ this is equivalent to
\[
\int\limits_B\abs{\lap u_2}^2dx -\L \int\limits_B\abs{\grad u_2}^2dx<0\,. 
\]
This means $\J(u_2)<\L$, which is contradictory to the minimality of $\L$.
\end{proof}
Next, we compute the first variation of the functional $\Je$ in 
\[
  \Omega^+(u) := \interior\{x\in B: u(x)\geq 0\} \quad \text{and} \quad \Omega^-(u) := \interior\{x\in B: u(x)\leq 0\}\,.
\]
Crucial for this computation is the $C^{1,1}$ regularity of a minimizer $u$ and the embedding theorem \ref{theorem:embed}. For the proof of this theorem we refer to \cite{hitch} or \cite{Kn13}, Chapter A.1. 
\begin{theorem}\label{theorem:embed}
 Let $v\geq 0$ be in $C^{1,1}(B)\cap H^{2,2}_0(B)$. Furthermore, let $p\geq 4$, $B_R(x_0) \subset B$ for some $x_0 \in B$ and $R>0$. Then $\sqrt{v} \in H^{1,p}(B_R(x_0))$ and the following estimate holds
  \[
    \int\limits_{B_R(x_0)} \abs{\grad{\sqrt{v}}}^p\,dx \; \leq \; C\,\norm{v}^\frac{p}{2}_{C^{1,1}(B)}\,,
  \]
  where $C$ does not depend on $v$.
\end{theorem}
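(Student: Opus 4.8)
\emph{Proof idea.} The natural starting point is the formal identity $\grad{\sqrt v}=\grad v/(2\sqrt v)$ on $\{v>0\}$, and the plan is to reduce the desired $L^p$-bound for $\grad{\sqrt v}$ to a \emph{pointwise} estimate, which follows from a Glaeser-type inequality for nonnegative $C^{1,1}$ functions. The one genuine subtlety — the existence of the weak derivative of $\sqrt v$ near the zero set $\{v=0\}$ — is then handled by a standard approximation.

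\textbf{Step 1 (a pointwise gradient estimate).} Since $v\in H^{2,2}_0(B)$, the functions $v$ and $\grad v$ vanish on $\partial B$; hence I would extend $v$ by zero to all of $\R^n$ and observe that the extension (still denoted $v$) lies in $C^{1,1}(\R^n)$ with $v\geq 0$ everywhere and $\norm{D^2 v}_{L^\infty(\R^n)}\leq C\norm{v}_{C^{1,1}(B)}$. I then claim
\[
 \abs{\grad v(x)}^2 \;\leq\; 2\,\norm{D^2 v}_{L^\infty}\,v(x) \qquad \text{for every } x\in\R^n.
\]
To prove this, fix $x$ with $\grad v(x)\neq 0$, set $e:=-\grad v(x)/\abs{\grad v(x)}$ and $\phi(t):=v(x+te)$. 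Then $\phi(0)=v(x)$, $\phi'(0)=-\abs{\grad v(x)}$ and $\abs{\phi''}\leq L:=\norm{D^2 v}_{L^\infty}$ a.e., so Taylor's formula gives $\phi(t)\leq v(x)-\abs{\grad v(x)}\,t+\tfrac{L}{2}t^2$ for $t\geq0$; evaluating at $t^\ast:=\abs{\grad v(x)}/L$ and using $\phi(t^\ast)=v(x+t^\ast e)\geq0$ yields the inequality. (The zero extension is exactly what makes the ray $t\mapsto x+te$ stay in the region $\{v\geq0\}$.)

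\textbf{Step 2 (pointwise bound for $\grad{\sqrt v}$).} For $\delta>0$ the function $\sqrt{v+\delta}$ is $C^{1,1}$ as a composition of the smooth map $s\mapsto\sqrt{s}$ on $[\delta,\infty)$ with $v$, and $\grad{\sqrt{v+\delta}}=\grad v/(2\sqrt{v+\delta})$. Combining this with Step 1,
\[
 \abs{\grad{\sqrt{v+\delta}}(x)}^p \;=\; \frac{\abs{\grad v(x)}^p}{2^p(v(x)+\delta)^{p/2}} \;\leq\; \frac{\big(2L\,v(x)\big)^{p/2}}{2^p(v(x)+\delta)^{p/2}} \;\leq\; \Big(\frac{L}{2}\Big)^{p/2}
\]
for a.e.\ $x$, uniformly in $\delta$.

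\textbf{Step 3 (passage to the limit).} Since $0\leq\sqrt{v+\delta}-\sqrt v\leq\sqrt\delta$, one has $\sqrt{v+\delta}-\sqrt\delta\to\sqrt v$ uniformly, hence in $L^p(B_R(x_0))$. By Step 2 the gradients $\grad{\sqrt{v+\delta}}$ are bounded in $L^p(B_R(x_0))$, so (as $p>1$) a subsequence converges weakly in $L^p$, and the limit is the weak gradient of $\sqrt v$; thus $\sqrt v\in H^{1,p}(B_R(x_0))$. Weak lower semicontinuity of the $L^p$-norm together with Step 2 gives
\[
 \int\limits_{B_R(x_0)}\abs{\grad{\sqrt v}}^p\,dx \;\leq\; \liminf_{\delta\to0}\int\limits_{B_R(x_0)}\abs{\grad{\sqrt{v+\delta}}}^p\,dx \;\leq\; \abs{B_R(x_0)}\Big(\frac{L}{2}\Big)^{p/2} \;\leq\; C\,\norm{v}_{C^{1,1}(B)}^{p/2},
\]
with $C=C(n,p,R)$ independent of $v$.

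\textbf{Main obstacle.} The crux is Step 1, the Glaeser-type inequality $\abs{\grad v}^2\leq 2\norm{D^2v}_{L^\infty}v$, together with the verification that the zero extension of $v$ is indeed $C^{1,1}(\R^n)$ (so the minimizing ray never leaves $\{v\geq0\}$); once this is available, Steps 2 and 3 are routine. Note the argument actually works for every $p\geq1$, so the hypothesis $p\geq4$ is needed only for the applications of the theorem.
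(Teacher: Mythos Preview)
Your proof is correct. The paper does not actually supply a proof of this theorem; it refers the reader to \cite{hitch} and to Chapter~A.1 of \cite{Kn13}. Your argument is a clean self-contained substitute: the key Step~1 is the classical Glaeser inequality $\abs{\grad v}^2\le 2\norm{D^2v}_{L^\infty}\,v$ for nonnegative $C^{1,1}$ functions, and your zero-extension trick (which uses $v\in H^{2,2}_0(B)\cap C^{1,1}(B)$, hence $v=\abs{\grad v}=0$ on $\partial B$, so the extended gradient is globally Lipschitz with the same constant) is exactly what ensures the one-dimensional Taylor ray stays in $\{v\ge0\}$. Steps~2 and~3 (the regularization $\sqrt{v+\delta}$ and weak compactness) are standard and correctly executed. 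Two minor remarks: (i) your pointwise bound in Step~2 in fact shows $\sqrt v$ is Lipschitz, so $\sqrt v\in W^{1,\infty}(B_R(x_0))$ and the result holds for every $p\ge1$, confirming your final observation that the hypothesis $p\ge4$ is only used in the subsequent application (Theorem~\ref{ELI_nonneg}); (ii) the constant $C$ you obtain depends on $\abs{B_R(x_0)}$, which is consistent with the statement since the theorem only asserts independence from $v$.
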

\begin{theorem}\label{ELI_nonneg}
 Suppose $B_r(x_0) \subset\Omega^+(u)$. Then for each $\phi \in C^\infty_{c}(B_r(x_0))$ with $\phi\geq 0$ there holds
 \[
  \int\limits_{B_r(x_0)\cap\{u>0\}}\U\lap\phi\,dx \leq 0\,.
 \]
\end{theorem}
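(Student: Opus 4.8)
The plan is to obtain the inequality as the first-order condition of a one-sided (obstacle-type) minimization, mimicking the argument of Theorem~\ref{theorem:eli} but now working on the positive phase only and using the embedding Theorem~\ref{theorem:embed} to handle perturbations of the form $u - t\,\phi\,\sqrt{\cdot}$ that keep the admissible set inside $\{u>0\}$. Concretely, fix $B_r(x_0)\subset\Omega^+(u)$ and a nonnegative $\phi\in C^\infty_c(B_r(x_0))$. For $t>0$ small I would like to compare $u$ with a competitor $v_t$ which agrees with $u$ outside $B_r(x_0)$, satisfies $0\le v_t\le u$ inside, and is designed so that $\O(v_t)\subset\O(u)$ (so the penalization term does not increase and, by Remark~\ref{remark:min}, comparing $\J$-energies is legitimate). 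A natural choice is $v_t := u - t\,\phi$ where one must be careful that $v_t$ stays nonnegative; since $u$ may vanish to second order along $\partial\{u>0\}$, the correct perturbation is through $\sqrt{u}$: set $v_t := \big(\sqrt{u}-t\,\phi\big)_+^2$ on $B_r(x_0)$ and $v_t:=u$ elsewhere. Then $0\le v_t\le u$, $\O(v_t)\subset\O(u)\cap\{u>0\}$, and Theorem~\ref{theorem:embed} guarantees $\sqrt{u}\in H^{1,p}$ for $p\ge4$, which is exactly what is needed to ensure $v_t\in H^{2,2}_0(B)$ with controlled norms.

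The next step is to expand $\J_\eps(v_t)-\J_\eps(u)\ge 0$, equivalently (using $f_\eps(\abs{\O(v_t)})\le f_\eps(\abs{\O(u)})$ and the normalization) the local inequality
\[
 \int_{B_r(x_0)}\abs{\lap v_t}^2 - \abs{\lap u}^2\,dx \;\ge\; \L\int_{B_r(x_0)}\abs{\grad v_t}^2-\abs{\grad u}^2\,dx,
\]
divide by $t$, and let $t\to0^+$. The derivative at $t=0$ of the right-hand side produces a term $-2\L\int \grad u.\grad(\text{first variation})$, and of the left-hand side a term $2\int \lap u\,\lap(\text{first variation})$. Writing $\delta := \frac{d}{dt}\big|_{t=0}v_t$, formal differentiation of $(\sqrt{u}-t\phi)_+^2$ gives $\delta = -2\phi\sqrt{u}$ on $\{u>0\}$, so after an integration by parts (using $u$ solves $\bilap u + \L\lap u=0$ in $\Omega(u)$, hence on $B_r(x_0)\cap\{u>0\}$, and the fact that $\U=\lap u+\L u$ is harmonic there) the bilinear terms collapse to $\int_{B_r(x_0)\cap\{u>0\}}\U\,\lap\delta\,dx\le 0$; one then checks that the square term $\int\abs{\lap(t\cdot)}^2=O(t^2)$ and the $\abs{\grad}^2$ square term are likewise $o(t)$, so only the linear term survives. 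Replacing $\delta$ by a harmless positive multiple of $\phi\sqrt{u}$ and then absorbing the $\sqrt{u}$ factor — or, more cleanly, running the whole argument directly with competitor $v_t=u-t\phi$ on the open subset $\{u>0\}\cap B_r(x_0)$ where $u$ is bounded away from $0$ on the support of $\phi$ — yields $\int_{B_r(x_0)\cap\{u>0\}}\U\lap\phi\,dx\le0$. Since $\phi\ge0$ is arbitrary in $C^\infty_c(B_r(x_0))$, the claim follows.

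The main obstacle I expect is precisely the regularity bookkeeping in the perturbation near $\partial\{u>0\}$: a perturbation $u-t\phi$ need not be nonnegative (it can push $u$ below zero where $u$ is small but $\phi>0$), so one genuinely needs the $\sqrt{u}$-device of Theorem~\ref{theorem:embed} to produce an admissible one-sided competitor, and then one must verify that $v_t=(\sqrt u - t\phi)_+^2$ really lies in $H^{2,2}_0(B)$ with the right $t\to0$ asymptotics — this is where $C^{1,1}$ regularity of $u$ (boundedness of $\lap u$, Theorem~\ref{lap_u_bounded}) and the $H^{1,p}$ bound on $\grad\sqrt u$ for $p\ge4$ are both essential, since they control $\lap v_t$ in $L^2$ uniformly in $t$. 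A secondary technical point is justifying the passage of the difference quotient to the limit (dominated convergence for the quadratic integrands), which is routine once the uniform $H^{2,2}$ bound on $v_t$ is in hand. The sign condition $\phi\ge0$ enters exactly once, through the sign of $f_\eps(\abs{\O(v_t)})-f_\eps(\abs{\O(u)})\le0$ and the inclusion $\O(v_t)\subset\O(u)$, which is why the conclusion is an inequality rather than an equality — this is the one-phase analogue of the variational inequality in Theorem~\ref{theorem:eli}.
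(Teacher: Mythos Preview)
Your competitor $v_t=(\sqrt{u}-t\phi)_+^2$ produces the wrong first variation. Differentiating at $t=0$ gives $\delta=-2\phi\sqrt{u}$, so the limiting inequality you obtain is
\[
\int_{B_r(x_0)\cap\{u>0\}}\U\,\lap(\phi\sqrt{u})\,dx\;\le\;0,
\]
not the claimed inequality for $\lap\phi$. There is no honest way to ``absorb the $\sqrt{u}$ factor'': replacing $\phi$ by $\psi/\sqrt{u}$ would require $\psi/\sqrt{u}\in C^\infty_c(B_r(x_0))$, which fails whenever $\operatorname{supp}\psi$ meets $\partial\{u>0\}$, and an approximation of $\phi$ by test functions supported in $\{u>0\}$ cannot be carried out in $C^2$ (the cut-offs produce Laplacians of order $\rho^{-2}$ on strips of width $\rho$, while $\U$ need not vanish on the free boundary). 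Your fallback --- running the argument with $v_t=u-t\phi$ where ``$u$ is bounded away from $0$ on the support of $\phi$'' --- is precisely the case with no content: the theorem is only interesting when $B_r(x_0)$ meets $\partial\{u>0\}$, and then $u$ is \emph{not} bounded away from zero on $\operatorname{supp}\phi$, so $u-t\phi$ becomes negative for every $t>0$ and is inadmissible.

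What the paper does instead is regularize $(u-\delta\phi)_+$ directly, via a map $h_\delta$ (depending pointwise on $u$) that interpolates between $0$ for $s\le0$ and the identity for $s\ge u$, and is designed so that $h'_\delta=1+O(\delta)$ and $(h''_\delta(s))^2s^2=O(\delta^2)$ on the transition layer. The competitor $h_\delta(u-\delta\phi)$ then agrees with $u-\delta\phi$ on $\{u>\delta\phi\}$ up to errors that are $o(\delta)$ in the relevant energies, so the first variation really is $-\phi$ and one lands on $\int\U\lap\phi\le0$ after dividing by $\delta$. Theorem~\ref{theorem:embed} enters only to bound the quadratic remainder coming from $h''_\delta$: one writes $(h''_\delta)^2|\nabla(u-\delta\phi)|^4=16\,(h''_\delta)^2(u-\delta\phi)^2\,|\nabla\sqrt{u-\delta\phi}|^4$ and uses $\nabla\sqrt{u-\delta\phi}\in L^4$ together with the $O(\delta^2)$ bound on $(h''_\delta)^2(u-\delta\phi)^2$. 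So your instinct that $C^{1,1}$ regularity and the $\sqrt{\cdot}$-embedding are the two key tools is correct; what is missing is a competitor whose linearization is $-\phi$ rather than $-2\phi\sqrt{u}$.
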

\begin{proof}
 Since $\U$ is harmonic almost everywhere in $\Omega(u)$, it is sufficient to assume $B_r(x_0)\cap \partial\Omega(u)\neq \emptyset$. The main difficulty in this proof is the choice of an appropriate test function. We require a perturbation $v$ of $u$ with $v\in H^{2,2}_0(B)$, which fulfils $\abs{\O(v)}\leq\abs{\O(u)}$. A perturbation, which enlarges the support, inhibits a reasonable comparison of the $\J$-energies because of the monotonicity of the penalization term. 
Let us consider $\phi\in C^\infty_c(B_r(x_0))$ with $\phi\geq 0$. The natural way to choose a test function, which decreases the Lebesgue measure of $\O(u)$, would be $(u- \delta\phi)_+$ for some small positive $\delta$. However, in general this function is not in $H^{2,2}(B_r(x_0))$. Hence, we need a regularization of the positive part. 
Therefore, we define for $\delta > 0$ 
\[
  h_\delta(s) := \begin{cases}
  0, & s\leq 0 \\
  -\frac{\delta}{1-\delta}\frac{s^2}{u}+\frac{s^{1+\delta}}{u^\delta(1-\delta)}, &0<s<u \\
 s, & s\geq u
\end{cases}.
\]
Now consider $\phi\in C^\infty_c(B_r(x_0))$ with $\phi\geq 0$. Then $h_\delta(u-\delta\phi) \in H^{2,2}_0(B)$ and $\O(h_\delta(u-\delta\phi)) \subset \O(u)$. 
Note that in $\{0< u-\delta\phi < u\}$ there holds  
\begin{equation}\label{eq:h'}
  h'_\delta(u-\delta\phi) = 1 + O(\delta).
\end{equation}
Thus, 
\begin{equation}\label{eq:grad_h}
 \abs{\grad h_\delta(u-\delta\phi)}^2 =  \abs{\grad (u-\delta\phi)}^2 + O(\delta),
\end{equation}
where $O(\delta)$ collects all terms, which vanish in the limit as $\delta$ tends to zero. Furthermore, in $\{0< u-\delta\phi < u\}$ we find that 
\begin{equation}\label{eq:h''}
 (u-\delta\phi)^2(h''_\delta(u-\delta\phi))^2 
 \leq 50 \delta^2
\end{equation}
if we choose $0 <\delta < \frac{1}{2}$.
Now we compare the $\J$-energy of $h_\delta(u-\delta\phi)$ with $\L$. The minimality of $u$ implies
\[
\L\int\limits_B\abs{\grad h_\delta(u-\delta\phi)}^2dx \leq \int\limits_B \abs{\lap h_\delta(u-\delta\phi)}^2 dx.
\]
Consequently, we get the local estimate
\begin{equation}\label{eq:local}
 \L \int\limits_{B_r(x_0)} \abs{\grad h_\delta(u-\delta\phi)}^2 - \abs{\grad u}^2dx \leq \int\limits_{B_r(x_0)}\abs{\lap h_\delta(u-\delta\phi)}^2 - \abs{\lap u}^2dx.
\end{equation}
For the sake of convenience, we write $B_r$ instead of $B_r(x_0)$ and $h_\delta$ instead of $h_\delta(u-\delta\phi)$. The first integral on the left hand side of the above inequality can be reformulated as
\begin{align*}
\int\limits_{B_r}\abs{\grad h_\delta}^2dx 
=\hspace{-4mm}\int\limits_{B_r\cap\{0<u-\delta\phi<u\}}\hspace{-4mm}\left(h'^2_\delta-1\right)\abs{\grad(u-\delta\phi)}^2dx +\hspace{-4mm} \int\limits_{B_r\cap\{0<u-\delta\phi\}}\hspace{-4mm}\abs{\grad(u-\delta\phi)}^2dx 
\end{align*}
Since $\abs{\{0< u-\delta\phi < u\}}$ tends to zero as $\delta$ tends to zero, identity \eqref{eq:grad_h} implies
\begin{equation}\label{eq:left}
\int\limits_{B_r}\abs{\grad h_\delta}^2dx = \int\limits_{B_r\cap\{0<u-\delta\phi\}}\hspace{-4mm}\abs{\grad(u-\delta\phi)}^2dx  +o(\delta),
\end{equation}
where $o(\delta)$ collects all terms such that $\frac{o(\delta)}{\delta}\to 0$ as $\delta \to 0$.  It remains to study
\begin{align*}
 \int\limits_{B_r}\abs{\lap h_\delta}^2dx
= \hspace{-4mm}\int\limits_{B_r\cap\{0<u-\delta\phi<u\}}\hspace{-4mm}\abs{\lap h_\delta}^2-\abs{\lap (u-\delta\phi)}^2dx +\hspace{-7mm} \int\limits_{B_r\cap\{0<u-\delta\phi\}}\hspace{-4mm}\abs{\lap (u-\delta\phi)}^2dx
\end{align*}
This will be more challenging than study the integral before. Note that there holds
\begin{align*}
 &\int\limits_{B_r\cap\{0<u-\delta\phi<u\}}\hspace{-4mm}\abs{\lap h_\delta}^2-\abs{\lap (u-\delta\phi)}^2dx \\
\leq\quad & \int\limits_{B_r\cap\{0<u-\delta\phi<u\}}\hspace{-4mm}h''^2_\delta\abs{\grad(u-\delta\phi)}^4 + \left(h'^2_\delta-1\right)\abs{\lap(u-\delta\phi)}^2dx \\&+ C\sqrt{\abs{\{0<u-\delta\phi<u\}}}\sqrt{\int\limits_{B_r\cap\{0<u-\delta\phi<u\}}\hspace{-4mm} h''^2_\delta\abs{\grad(u-\delta\phi)}^4dx},
\end{align*}
where we used the $C^{1,1}$ regularity of $u$.  In order to apply estimate \eqref{eq:h''}, we use the identity
\[
   \abs{\grad(u-\delta\phi)}^4 = 16(u-\delta\phi)^2\abs{\grad\sqrt{u-\delta\phi}}^4.
\] 
Subsequently, applying \eqref{eq:h'} and \eqref{eq:h''}, we can proceeed to
\begin{align*}
 &\int\limits_{B_r\cap\{0<u-\delta\phi<u\}}\hspace{-4mm}\abs{\lap h_\delta}^2-\abs{\lap (u-\delta\phi)}^2dx 
\leq C\delta^2\hspace{-6mm}\int\limits_{B_r\cap\{0<u-\delta\phi<u\}}\hspace{-4mm}\abs{\grad\sqrt{u-\delta\phi}}^4 dx\\ \;+ & C\delta \sqrt{\abs{\{0<u-\delta\phi<u\}}}\sqrt{\int\limits_{B_r\cap\{0<u-\delta\phi<u\}}\hspace{-4mm} \abs{\grad\sqrt{u-\delta\phi}}^4dx} + o(\delta).
\end{align*}
Now Theorem \ref{theorem:embed} implies 
\[
\int\limits_{B_r\cap\{0<u-\delta\phi<u\}}\abs{\lap h_\delta}^2-\abs{\lap (u-\delta\phi)}^2dx = o(\delta)
\]
and, subsequently, 
\begin{equation}\label{eq:right}
\int\limits_{B_r}\abs{\lap h_\delta}^2dx = \int\limits_{B_r\cap\{0<u-\delta\phi\}}\abs{\lap(u-\delta\phi)}^2dx +o(\delta).
\end{equation}
Going back to \eqref{eq:local} and using \eqref{eq:left} and \eqref{eq:right} we achieve
\begin{align}\label{eq:fv_1}
 2\delta\int\limits_{B_r\cap\{0 < u-\delta\phi\}}\U\lap\phi\,dx \leq \L\int\limits_{B_r\cap\{0< u\leq\delta\phi\}}\abs{\grad u}^2dx + o(\delta)\,.
\end{align}
For the last integral on the right hand side we obtain
\begin{align*}
 \int\limits_{B_r\cap\{0< u\leq\delta\phi\}} \abs{\grad u}^2dx&= -\int\limits_{B_r\cap\{0< u\leq\delta\phi\}}u\lap udx +\delta \int\limits_{\partial(B_r\cap\{0< u\leq\delta\phi\})}\phi\partial_\nu u\,dS \\&= o(\delta)
\end{align*}
since $\abs{\{0< u\leq\delta\phi\}}$ tends to zero as $\delta$ tends to zero. Note that we are allowed to integrate by parts since almost every level set $\{u = \delta\phi\}$ is smooth.
Thus,  \eqref{eq:fv_1} simplifies to
\[
 2\delta\int\limits_{B_r\cap\{0 < u-\delta\phi\}}\U\lap\phi\,dx \leq o(\delta). 
\]
Dividing by $\delta$ and letting $\delta$ tend to zero proves the claim.
\end{proof}
With some obvious changes, we can also prove the following corollary.
\begin{corollary}\label{ELI_nonpos}
  Suppose $B_r(x_0)\subset \Omega^-(u)$. Then for each $\phi \in C^\infty_{c}(B_r(x_0))$ with $\phi\geq 0$ there holds
 \[
  \int\limits_{B_r(x_0)\cap\{u<0\}}\U\lap\phi\,dx \geq 0\,.
 \]
\end{corollary}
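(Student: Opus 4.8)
The plan is to deduce Corollary~\ref{ELI_nonpos} directly from Theorem~\ref{ELI_nonneg} by the substitution $w := -u$, which is exactly the ``obvious change'' referred to in the statement and which spares us from repeating the delicate estimates in that theorem's proof. First I would observe that $w$ inherits every property of $u$ that was used there. Since $\J$ depends on its argument only through $\abs{\lap\cdot}^2$ and $\abs{\grad\cdot}^2$, we have $\J(w)=\J(u)$ and $\O(w)=\O(u)$, hence $\Je(w)=\Je(u)$ and $w$ is again a minimizer of $\Je$; in particular (for $\eps<\eps_0$) all results of the preceding sections apply to $w$ — it lies in $C^{1,1}(\overline{B})$, it solves $\bilap w + \L\lap w = 0$ almost everywhere in $\Omega(w)=\Omega(u)$, and its associated harmonic function is
\[
  \lap w + \L w \;=\; -(\lap u + \L u) \;=\; -\U \qquad\text{a.e. in }\Omega(u).
\]
Next I would match the hypotheses: since $\Omega^+(w)=\interior\{x\in B: u(x)\leq 0\}=\Omega^-(u)$ and $\{w>0\}=\{u<0\}$, the assumption $B_r(x_0)\subset\Omega^-(u)$ is precisely the hypothesis of Theorem~\ref{ELI_nonneg} applied to the minimizer $w$.

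With this preparation the conclusion follows in one line. Theorem~\ref{ELI_nonneg} applied to $w$ gives, for every $\phi\in C^\infty_c(B_r(x_0))$ with $\phi\geq 0$,
\[
  \int\limits_{B_r(x_0)\cap\{w>0\}} (\lap w + \L w)\,\lap\phi\,dx \;\leq\; 0,
\]
and inserting $\lap w + \L w = -\U$ together with $\{w>0\}=\{u<0\}$, then multiplying by $-1$, yields exactly the asserted inequality $\int_{B_r(x_0)\cap\{u<0\}}\U\lap\phi\,dx\geq 0$.

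Should one insist on a direct proof instead, I would mimic the proof of Theorem~\ref{ELI_nonneg} with the sign-reflected regularization $s\mapsto -h_\delta(-s)$ of $\min\{s,0\}$ and test against the perturbation that raises $u$ by $\delta\phi$ on the thin layer where $u$ is slightly negative; the comparison function still satisfies $\O(\cdot)\subset\O(u)$, so the monotone penalization term does not grow, and the comparison of $\J$-energies proceeds exactly as before. The step I expect to be the main obstacle in that route is, just as in the original argument, controlling the Laplace terms over the transition layer $\{u < u-\delta\phi < 0\}$: one rewrites $\abs{\grad(u-\delta\phi)}^4 = 16(u-\delta\phi)^2\abs{\grad\sqrt{-(u-\delta\phi)}}^4$ and applies Theorem~\ref{theorem:embed} to $-u\geq 0$, together with the $\eps$-independent $C^{1,1}$ bound on $u$, to see that this contribution is $o(\delta)$. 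Since the substitution $w=-u$ hides all of this inside the already-established Theorem~\ref{ELI_nonneg}, I would present the substitution argument as the actual proof and mention the direct version only in passing.
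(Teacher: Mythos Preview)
Your proposal is correct and in the spirit of the paper, which merely writes ``With some obvious changes, we can also prove the following corollary'' and gives no further details. Your substitution $w:=-u$ is a clean realization of those ``obvious changes'': since $\J$ and $\O$ are even in their argument, $w$ is again a minimizer with the same $C^{1,1}$ regularity, $\Omega^+(w)=\Omega^-(u)$, $\{w>0\}=\{u<0\}$, and $\lap w+\L w=-\U$, so Theorem~\ref{ELI_nonneg} applied verbatim to $w$ yields the desired inequality after multiplying by $-1$. The direct variant you sketch (replacing $h_\delta$ by $s\mapsto -h_\delta(-s)$ and perturbing with $u+\delta\phi$) is presumably what the author had in mind by ``obvious changes''; your reduction via $w=-u$ packages the same computation without repeating it.
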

%%%%
%%%% SECTION 5
\section{The Free Boundary }\label{ch:fb}
The crucial step for all further results concerning the free boundary is the nondegeneracy of a minimizer $u$ along the free boundary
 We roughly follow the idea of C. Bandle and A. Wagner in \cite{BaWa09}. Note that in this paper, the authors chose a penalization term like $\hat{f_\eps}$ as defined in Remark \ref{rem:penal_term}. 
In summary, the crucial point in that proof is the strictly monotone penalization term and the fact that the minimizer satisfies the volume condition. \\
As described in Remark \ref{rem:penal_term}, with a strictly monotone penalization term we would not have been able to show that a minimizer $u$ satisfies the volume condition in our case.  However, we now know that assuming $\eps \leq \eps_0$ each minimizer $\ue$ of $\J_\eps$ satisfies $\abs{\Omega(\ue)}=\omega_0$. Hence, for $\eps\leq\eps_0$ and $\ue$ a minimizer of $\Je$ in $H^{2,2}_0(B)$ there holds
\[
   \L = \L(\Omega(\ue)) = \Je(\ue) = \min\{\L(\Omega) : \Omega \subset B, \; \abs{\Omega}\leq \omega_0 \}.
\]
In the sequel, we show that for $\eps$ sufficiently small the minimizer $\ue$ of $\Je$ is also a minimizer of the functional $\Ie : H^{2,2}_0(B) \to \R$ defined by
\[
   \Ie(v) := \J(v) + \hat{f}_\eps(\abs{\O(v)}).
\]
Thereby, the new term $\hat{f}_\eps$ is defined as in Remark \ref{rem:penal_term}.

\begin{theorem}\label{theo:u_min_I}
 There exists a parameter $\eps_1>0$ such that if $\eps \leq \min\{\eps_0,\eps_1\}$,  then for each $v \in H^{2,2}_0(B)$ there holds
\[
 \mathcal{I}_\eps(u) \leq \mathcal{I}_\eps(v),
\]
where $u = \ue$ is a minimizer of $\Je$.
\end{theorem}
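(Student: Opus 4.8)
The plan is to show directly that $u=\ue$ attains the minimum of $\Ie$ over $H^{2,2}_0(B)$. First I would evaluate $\Ie(u)$: since $\eps\le\eps_0$, Theorem~\ref{theorem:vol_satis} gives $\abs{\O(u)}=\omega_0$, hence $\hat f_\eps(\abs{\O(u)})=0$ and
\[
  \Ie(u)=\J(u)=\Le=:\L .
\]
I would also recall, from the remarks following Theorem~\ref{theorem:vol_satis}, that this number $\L$ equals $\min\{\L(\Omega):\Omega\subset B,\ \abs{\Omega}\le\omega_0\}$, is independent of $\eps$, and satisfies $0<\L\le\L_{max}$ by Remark~\ref{est_Le}. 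It then suffices to verify $\Ie(v)\ge\L$ for every $v\in H^{2,2}_0(B)$, which I would split according to the size of $\abs{\O(v)}$.

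If $\abs{\O(v)}\ge\omega_0$, the functions $\hat f_\eps$ and $\fe$ agree at $\abs{\O(v)}$, so $\Ie(v)=\Je(v)\ge\Je(u)=\L$ by the minimality of $u$ for $\Je$; no restriction on $\eps$ is needed here. The substantial case is $\omega:=\abs{\O(v)}<\omega_0$, where $\hat f_\eps(\omega)=-\eps(\omega_0-\omega)$ and I must upgrade the trivial bound $\J(v)\ge\L$ (Remark~\ref{remark:min}) to the quantitative estimate
\[
  \J(v)\ \ge\ \L+\tfrac{2\L}{n\omega_0}\,(\omega_0-\omega).
\]
To obtain it I would rescale: set $\mu:=(\omega/\omega_0)^{1/n}<1$ and $\tilde v(x):=v(\mu x)$, so that $\abs{\O(\tilde v)}=\mu^{-n}\omega=\omega_0$ and, by the homothety $\J(v(\mu\,\cdot))=\mu^2\J(v)$, $\J(\tilde v)=\mu^2\J(v)$. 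Provided the dilated support $\O(\tilde v)=\mu^{-1}\O(v)$ still lies in $B$, Remark~\ref{remark:min} yields $\J(\tilde v)\ge\L$, i.e. $\J(v)\ge(\omega_0/\omega)^{2/n}\L$; then the elementary inequality $\tau^{2/n}-1\ge\tfrac2n(1-\tau^{-1})$ for $\tau\ge1$ (valid because $\tau\mapsto\tau^{(n+2)/n}-(1+\tfrac2n)\tau$ is nondecreasing for $\tau\ge1$), applied with $\tau=\omega_0/\omega$ and $1-\tau^{-1}=(\omega_0-\omega)/\omega_0$, gives exactly the displayed bound.

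Consequently, setting $\eps_1:=\dfrac{2\L}{n\,\omega_0}>0$ (a fixed positive number, since $\L$ does not depend on $\eps$), for $\eps\le\min\{\eps_0,\eps_1\}$ we obtain in the remaining case
\[
  \Ie(v)=\J(v)-\eps(\omega_0-\omega)\ \ge\ \L+\bigl(\tfrac{2\L}{n\omega_0}-\eps\bigr)(\omega_0-\omega)\ \ge\ \L=\Ie(u),
\]
which together with the first case proves the theorem. (Equivalently, one may first produce by the direct method, as in Theorem~\ref{existence_sobo}, a minimizer $v_\eps$ of $\Ie$, note by a local perturbation as in the proof of Theorem~\ref{theorem:vol_satis} that $\abs{\O(v_\eps)}\le\omega_0$, and then run the same rescaling on $v_\eps$ to conclude $\Ie(v_\eps)\ge\L$, so that $u$, being a competitor with $\Ie(u)=\L$, minimizes $\Ie$.)

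The step I expect to be the real obstacle is the claim that the dilated support $\mu^{-1}\O(v)$ is again contained in $B$: a dilation by a factor larger than $1$ of a set that nearly spans $B$ need not fit. I would deal with this by observing that the asserted inequality is only at stake for competitors $v$ with $\Ie(v)\le\L$, for which $\J(v)=\Ie(v)+\eps(\omega_0-\omega)\le\L+\eps\omega_0$ is bounded; combining this with the isoperimetric-type estimate $\L(\Omega)\abs{\Omega}^{2/n}\ge c_n>0$ (a consequence of $\L(\Omega)\ge\lambda_1(\Omega)$ and Faber--Krahn) forces $\abs{\O(v)}$ to be bounded below, so $\mu^{-1}$ stays in a fixed bounded range, and the hypothesis $\omega_0\ll\abs{B}$ — after discarding, if necessary, a low-volume far-flung portion of $\O(v)$ — then ensures that $\mu^{-1}\O(v)$ embeds into $B$ up to a translation. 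The other ingredients (the value $\Ie(u)=\L$, the case distinction, the homothety of $\J$, and the elementary inequality for $\tau^{2/n}$) are routine.
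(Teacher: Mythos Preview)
Your proposal follows essentially the same strategy as the paper: compute $\Ie(u)=\L$, dispose of competitors with $\abs{\O(v)}\ge\omega_0$ via the $\Je$-minimality of $u$, and for $\abs{\O(v)}<\omega_0$ use the homothety of the buckling eigenvalue to obtain $\J(v)\ge(\omega_0/\abs{\O(v)})^{2/n}\L$ and then an elementary inequality to absorb the reward term. Your single inequality $\tau^{2/n}-1\ge\tfrac{2}{n}(1-\tau^{-1})$ is a tidy unification of the paper's two sub-cases ($\abs{\O(v_\eps)}<\omega_0/2$ versus $\omega_0/2<\abs{\O(v_\eps)}<\omega_0$) and produces the same threshold $\eps_1=\tfrac{2\L}{n\omega_0}$; you also flag explicitly the question of whether the dilated competitor $\mu^{-1}\O(v)$ still lies in $B$, which the paper passes over without comment, so your argument is at least as complete as the paper's on that point.
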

\begin{proof}
Assume that there exists a $v_\eps \in H^{2,2}_0(B)$ with 
\[
   \mathcal{I}_\eps(v_\eps) < \mathcal{I}_\eps(u) = \Lambda.
\]
Note that assuming $\vert\O(v_\eps)\vert\geq\omega_0$ leads to a contradicition since $u$ minimizes $\Je$. Thus, we assume $\vert\O(v_\eps)\vert<\omega_0$ and distinguish  two cases. \\
{\bf a)}\; Let $\vert\O(v_\eps)\vert<\frac{\omega_0}{2} $. 
The homothety of $\Lambda$ implies
\begin{equation}\label{eq:homothety}
  \Lambda(\O(v_\eps)) = \left(\frac{\omega_0}{\vert\O(v_\eps)\vert}\right)^\frac{2}{n}\Lambda\left(\left(\frac{\omega_0}{\vert\O(v_\eps)\vert}\right)^\frac{1}{n}\O(v_\eps)\right) \geq \left(\frac{\omega_0}{\vert\O(v_\eps)\vert}\right)^\frac{2}{n}\Lambda.
\end{equation}
As a consequence, $\mathcal{I}_\eps(v_\eps) < \mathcal{I}_\eps(u) $ implies
\begin{align*}
\left( \left(\frac{\omega_0}{\vert \O(v_\eps)\vert}\right)^\frac{2}{n}-1  \right) \Lambda < \eps(\omega_0 - \vert\O(v_\eps)\vert).
\end{align*}
Due to the assumption on $\vert\O(v_\eps)\vert$ we find
  $\frac{2^\frac{2}{n}-1}{\omega_0}\Lambda < \eps$.
\\
{\bf b)} \; Let $\frac{\omega_0}{2}<\vert\O(v_\eps)\vert < \omega_0$. Then we may write $\vert\O(v_\eps)\vert = \omega_0-\kappa$
with $0<\kappa<\frac{\omega_0}{2}$. In view of \eqref{eq:homothety} $\mathcal{I}_\eps(v_\eps) < \mathcal{I}_\eps(u) $ implies
\begin{align*}
\frac{2}{n\, \omega_0}\Lambda < \eps.
\end{align*}
Thus, the claim follows setting $\eps_1 = \min\left\{\frac{2}{n\, \omega_0}\Lambda, \frac{2^\frac{2}{n}-1}{\omega_0}\Lambda \right\}.$
\end{proof}
The previous theorem shows that, for $\eps \leq \min\{\eps_0, \eps_1\}$, the domain $\Omega(\ue)$ stays optimal even if we reward the Rayleigh quotient $\J$ while concerning a domain with smaller Lebesgue measure. From now on we always consider $\eps  = \min\{\eps_0, \eps_1\}$ and denote $\ue = u$. Thus, every solution $u$ of problem \eqref{P} is a minimizer of $\Ie$, too.\\
The next theorem shows the nondegeneracy of $u$ along the free boundary. 

\begin{theorem}\label{theorem:nondeg}
 There exists a positive constant $c_0$ and a critical radius $r_0 = r_0(n,\omega_0)$ such that for each ball $B_r(x_0)$ with $x_0 \in \partial\Omega(u)$ and $r \leq r_0$ there holds
\[
   c_0\,r \leq \sup_{\overline{B_r(x_0)}}\vert\nabla u\vert.
\] 
The constant $c_0$ is independent of $x_0$ and $r$.
\end{theorem}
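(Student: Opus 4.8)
The plan is a proof by contradiction that exploits the decisive fact, just established in Theorem~\ref{theo:u_min_I}, that $u$ minimizes the functional $\Ie$ whose penalization term $\hat{f}_\eps$ \emph{rewards} a reduction of the support. Suppose the assertion fails: then for every $c_0>0$ and $r_0>0$ there exist $x_0\in\partial\Omega(u)$ and $0<r\le r_0$ with $\sup_{\overline{B_r(x_0)}}\abs{\grad u}<c_0 r$. I would fix such a pair, the constants $c_0,r_0$ being chosen small in terms of $n$ and $\omega_0$ only at the very end. Since $u\in C^{1,1}(\overline B)$ and $u(x_0)=\abs{\grad u(x_0)}=0$ (recall $\partial\Omega(u)=\Geo\subset\{u=0,\ \abs{\grad u}=0\}$), integrating $\grad u$ along radial segments from $x_0$ yields the pointwise bounds $\abs{\grad u}<c_0 r$ and $\abs{u}<c_0 r^2$ throughout $B_r(x_0)$.

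The first and technically central step is a Caccioppoli-type estimate,
\[
 \int\limits_{B_{3r/4}(x_0)}\abs{\lap u}^2\,dx \;\le\; C(n,\omega_0)\,c_0^2\,r^n .
\]
To prove it I would first note that $u$ and $\grad u$ vanish almost everywhere on $B\setminus\Omega(u)\subset\{u=0\}$, so by the standard fact that the second derivatives of a $W^{2,2}_{\mathrm{loc}}$ function vanish a.e.\ on the zero set of its gradient, $\lap u=0$ a.e.\ on $B\setminus\Omega(u)$; hence $\int_{B_r(x_0)}\abs{\lap u}^2=\int_{B_r(x_0)\cap\Omega(u)}\abs{\lap u}^2$, so the free boundary contributes nothing. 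On $\Omega(u)$ the equation $\bilap u+\L\lap u=0$ holds, and I would test its weak form with $\eta^4 u$, where $\eta\in C^\infty_c(B_r(x_0))$, $\eta\equiv1$ on $B_{3r/4}(x_0)$ and $\abs{\partial^\beta\eta}\le C r^{-\abs\beta}$ for $\abs\beta\le2$; this test function is admissible precisely because $u$ and $\grad u$ vanish on $\partial\Omega(u)$. Young's inequality together with the pointwise bounds above, $\L\le\L_{max}$ and $r<1$ then give the estimate after absorbing $\tfrac14\int\eta^4\abs{\lap u}^2$ into the left-hand side.

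Next I would build a support-decreasing competitor. With $\eta$ as above set $v:=(1-\eta)u\in H^{2,2}_0(B)$, so that $v\equiv0$ on $B_{r/2}(x_0)$, $v\equiv u$ outside $B_{3r/4}(x_0)$ and $\O(v)\subset\O(u)\setminus B_{r/2}(x_0)$; in particular $\abs{\O(v)}\le\omega_0$ and $\omega_0-\abs{\O(v)}\ge\abs{\O(u)\cap B_{r/2}(x_0)}$. Since $\Ie(u)\le\Ie(v)$ by Theorem~\ref{theo:u_min_I}, and $\hat{f}_\eps(\abs{\O(u)})=\hat{f}_\eps(\omega_0)=0$ while $\hat{f}_\eps(\abs{\O(v)})=-\eps(\omega_0-\abs{\O(v)})$, this reads
\[
 \J(v)-\L \;\ge\; \eps\bigl(\omega_0-\abs{\O(v)}\bigr)\;\ge\; \eps\,\abs{\O(u)\cap B_{r/2}(x_0)}\;\ge\;0 .
\]
Conversely, all relevant differences are supported in $B_{3r/4}(x_0)$: using the pointwise bounds, $\abs{\partial^\beta\eta}\le C r^{-\abs\beta}$ and the Caccioppoli estimate to control the transition annulus (where $\abs{\lap v}\le\abs{\lap u}+C c_0$ and $\abs{\grad v}\le C c_0 r$), together with $\norm{\grad u}_{L^2(B)}=1$ and $\int_B\abs{\lap u}^2=\L$, I obtain $\norm{\grad v}_{L^2(B)}^2\ge\tfrac12$ for $c_0 r_0$ small and
\[
 \bigl(\J(v)-\L\bigr)\int\limits_B\abs{\grad v}^2\,dx\;=\;\int\limits_B\abs{\lap v}^2\,dx-\L\int\limits_B\abs{\grad v}^2\,dx\;\le\; C(n,\omega_0)\,c_0^2\,r^n .
\]
Combining the last two displays (and $\eps=\eps(n,\omega_0)$) gives $\abs{\O(u)\cap B_{r/2}(x_0)}\le C'(n,\omega_0)\,c_0^2\,r^n$; so by choosing $c_0$ small depending only on $n$ and $\omega_0$, the Lebesgue density of $\O(u)$ in $B_{r/2}(x_0)$ can be made as small as we please.

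It remains to turn this into a contradiction with $x_0\in\partial\Omega(u)$. Since $\Ge$ is a Lebesgue nullset with empty interior (Lemma~\ref{lem:nodal_nullset}) while $\Omega(u)$ is open, $\abs{\O(u)\cap B_\rho(x_0)}>0$ for every $\rho$, so $x_0$ is an accumulation point of the zero set $\{u=0\}$; combined with the fact that $u$, $\grad u$ and $\lap u$ are all small on $B_{r/2}(x_0)$, a further comparison of $u$ with its biharmonic replacement on a small ball about $x_0$ — in the spirit of the argument at the end of the proof of Theorem~\ref{supp_bigger}, using the analyticity of biharmonic functions — forces $u\equiv0$ in a neighbourhood of $x_0$, which is the desired contradiction. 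I expect the two genuinely delicate points to be the Caccioppoli estimate — in particular, justifying the integration by parts on the \emph{a priori} irregular open set $\Omega(u)$ and that $\partial\Omega(u)$ carries no $\abs{\lap u}^2$-mass — and this final unique-continuation step; everything else is bookkeeping with the $C^{1,1}$ bounds, which are uniform in $\eps$ by Theorem~\ref{lap_u_bounded}.
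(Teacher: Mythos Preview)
Your overall strategy—contradiction, then comparison with a cutoff competitor using that $u$ minimizes $\Ie$ (Theorem~\ref{theo:u_min_I})—is exactly the paper's. The construction of $v=(1-\eta)u$ and the bound $\norm{\grad v}_{L^2(B)}^2\ge\tfrac12$ also match. The divergence is at the very end, and it is a real gap.

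From the $\Ie$-comparison you arrive, correctly, at
\[
  \eps\,\abs{\O(u)\cap B_{r/2}(x_0)} \;\le\; \J(v)-\L \;\le\; C(n,\omega_0)\,c_0^2\,r^n,
\]
i.e.\ the density of $\O(u)$ in $B_{r/2}(x_0)$ is small. You then try to upgrade this to $u\equiv0$ near $x_0$ by a biharmonic-replacement/analyticity argument ``in the spirit of Theorem~\ref{supp_bigger}''. That step does not go through. The comparison in Theorem~\ref{supp_bigger} worked only because one assumed $\abs{\O(u)}<\omega_0$, so the replacement did not trigger the penalty; here $\abs{\O(u)}=\omega_0$ and the biharmonic extension enlarges the support, so you cannot conclude $\J(u)\le\J(\hat v)$. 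And even if you could, ``$u$, $\grad u$, $\lap u$ are small on $B_{r/2}$'' is very far from ``$u$ coincides with an analytic function on $B_{r/2}$''; the identity theorem for power series has nothing to bite on. In short, small density plus small derivatives does not force $u\equiv0$ in any neighbourhood, and no unique-continuation principle available at this regularity yields that conclusion.

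The paper avoids this detour entirely. Instead of first proving a Caccioppoli estimate and then trying to pass from ``small density'' to a contradiction, it keeps the $\abs{\lap u}^2$–term in the comparison and uses a hole-filling step: writing the local inequality coming from $\Ie(u)\le\Ie(v_k)$ and estimating $\int_{B_{r_k}\setminus B_{r_k/2}}\abs{\lap(\eta_k u_k)}^2\le\int_{B_{r_k}\setminus B_{r_k/2}}\abs{\lap u_k}^2+C\,\xi_k\,\abs{B_{r_k}}$ (via the $C^{1,1}$ bounds), the annular $\abs{\lap u_k}^2$–integral cancels against the left-hand side and one is left with
\[
 0\;\le\;\int_{B_{r_k/2}}\abs{\lap u_k}^2\,dx\;\le\;\Bigl(C(n,\omega_0)\,\xi_k-\tfrac{\eps}{2}\Bigr)\abs{B_{r_k}},
\]
which is negative once $\xi_k$ is small. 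This gives the contradiction in one stroke; no Caccioppoli inequality and no unique continuation are needed. Your separate Caccioppoli step is not wrong (it can be made rigorous via Lemma~\ref{ETE} with $g(s)=s$, $\phi=\eta^4$, which handles the boundary issues you flag), but it is unnecessary, and it leads you into the dead end of trying to extract a contradiction from a mere smallness-of-density statement.
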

\begin{proof}
 We prove the claim by contradiction. Assume that there is a sequence of minimizers $(u_k)_k$, a sequence  $(x_k)_k$ with $x_k\in\partial\Omega(u_k)$ and a sequence of radii $(r_k)_k$ with $r_k\leq r_0$ such that 
\begin{equation}\label{eq:xi_k}
   \xi_k := \frac{1}{r_k}\sup_{\overline{B_{r_k}(x_k)}}\vert\nabla u_k\vert \stackrel{k\to\infty}{\longrightarrow}0. 
\end{equation}
This assumption immediately implies
\[
   \frac{1}{r^2_k}\sup_{\overline{B_{r_k}(x_k)}}\vert u_k\vert \leq 2 \xi_k \stackrel{k\to\infty}{\longrightarrow}0. 
\]
For each $k\in \N$ we choose  $\eta_k \in C^\infty(B_{r_k}(x_k))$ with  $0\leq \eta_k\leq 1$, $\eta_k \equiv 1 $ in $\R^n\setminus B_{r_k}(x_k)$ and $\eta_k \equiv 0$ in $B_\frac{r_k}{2}(x_k)$. 
Then there holds $v_k := \eta_ku_k \in H^{2,2}_0(B)$ and 
\begin{equation}\label{eq:volineq}
  \vert\O(v_k)\vert = \omega_0 - \vert B_\frac{r_k}{2}(x_k)\cap\O_k\vert \geq \omega_0 -\vert B_{r_k}(x_k)\cap\O_k\vert \geq \omega_0 - \vert B_{r_k}(x_k)\vert.
\end{equation}
Recall that we set $\eps:= \min\{\eps_0,\eps_1\}$. Then for each $k\in\N$ Theorem \ref{theo:u_min_I} implies
\[
  \mathcal{I}_\eps(u_k) \leq \mathcal{I}_\eps(v_k).
\]
By definition of $v_k$ and estimate \eqref{eq:volineq} we get the local inequality
\begin{equation}\label{eq:est1}\begin{split}
  &\int\limits_{B_{r_k}(x_k)}\vert\Delta u_k\vert^2\:dx \\ \leq &\int\limits_{B_{r_k}\setminus B_\frac{r_k}{2}(x_k)}\vert\Delta(\eta_ku_k)\vert^2\:dx + \Lambda\int\limits_{B_{r_k}(x_k)}\vert\nabla u_k\vert^2\:dx -\eps\vert B_{r_k}(x_k)\vert \int\limits_B\vert\nabla v_k\vert^2\:dx
\end{split}\end{equation}
We estimate the integrals on the right hand side separately. 
The $C^{1,1}$ regularity of $u_k$ and assumption \eqref{eq:xi_k} imply 
\begin{align*}
  \int\limits_{B_{r_k}\setminus B_\frac{r_k}{2}(x_k)}\vert\Delta (\eta_ku_k)\vert^2\:dx 
&\leq   \int\limits_{B_{r_k}\setminus B_\frac{r_k}{2}(x_k)}\vert\Delta u_k\vert^2\:dx + C(n,\omega_0) \xi_k \vert B_{r_k}(x_k)\vert.
\end{align*}
For the second integral on the right hand side of \eqref{eq:est1} we use assumption \eqref{eq:xi_k} and obtain
\[
  \int\limits_{B_{r_k}(x_k)}\vert\nabla u_k\vert^2\:dx \leq \sup_{\overline{B_{r_k}(x_k)}}\vert \nabla u_k\vert^2\vert B_{r_k}(x_k)\vert \leq \xi_k \vert B_{r_k}(x_k)\vert. 
\]
The last integral in \eqref{eq:est1} can be estimated analougously to estimate \eqref{eq:prob_2} in the proof of Theorem \ref{theorem:vol_satis}. We obtain 
\begin{align*}
  \int\limits_B\vert \nabla v_k\vert^2\:dx&\geq 1 - C(n,\omega_0)r_k^{n+2}
\geq 1 - C(n,\omega_0)r_0^{n+2} \geq \frac{1}{2}.
\end{align*}
The last inequality fixes the critical radius $r_0$. Joining the previous results, we obtain
\[
   0 \leq \int\limits_{B_\frac{r_k}{2}(x_k)}\vert\Delta u_k\vert^2\:dx \leq \left( C(n,\omega_0) \xi_k - \frac{\eps}{2} \right) \vert B_{r_k}(x_k)\vert.
\]
Since we assume $\xi_k$ to converge to zero as $k$ tends to infinity, there exists a $k_0 \in \N$ such that $C(n,\omega_0)\xi_k < \frac{\eps}{2}$ for each $k\leq k_0$.  Thus, for $k = k_0$ we find
\[
  0 \leq \left(C(n,\omega_0) \xi_{k_0}-\frac{\eps}{2}\right) \vert B_{r_{k_0}}(x_{k_0})\vert < 0. 
\]
Obviously, this is contradictory.
\end{proof}

Following the lines of 
\cite{BaWa09}, we derive a positive lower bound on the density of the free boundary. 
\begin{lemma}\label{lem:density}
There exists a positive constant $c_1$ such that for each solution $u$ of the problem~\eqref{P} there holds  
\begin{equation}\label{eq:dens_bound}
 \frac{\abs{B_r(x_0)\cap\Omega(u)}}{\abs{B_r(x_0)}}\geq c_1 \quad \mbox{for all $x_0 \in \partial\Omega(u)$ and $r>0$. }
\end{equation}
\end{lemma}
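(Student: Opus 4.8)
The plan is to extract the density bound from the nondegeneracy of $u$ along the free boundary (Theorem~\ref{theorem:nondeg}) together with the $C^{1,1}$ regularity of $u$, whose gradient is Lipschitz on $\overline{B}$ with a constant $L=L(n,\omega_0)$ independent of $\eps$ (Section~\ref{ch:Reg} and Remark~\ref{remark:bound_indep_eps}). Without loss of generality assume $\kappa := c_0/L \le 1$, where $c_0$ is the nondegeneracy constant of Theorem~\ref{theorem:nondeg}. It suffices to prove \eqref{eq:dens_bound} for $r \le r_0$, with $r_0=r_0(n,\omega_0)$ the critical radius of that theorem; for larger $r$ (which is anyway effectively bounded, since $\Omega(u)\subset B$ and $x_0\in\overline{B}$) one simply invokes $B_{r_0}(x_0)\subset B_r(x_0)$, at the cost of shrinking $c_1$.

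First I would fix $x_0\in\partial\Omega(u)$ and $r\le r_0$. By Theorem~\ref{theorem:nondeg} there is a point $y\in\overline{B_r(x_0)}$ with $\abs{\grad u(y)}\ge c_0 r$. Since $\grad u\equiv 0$ on $\partial\Omega(u)=\Geo$ and $\grad u$ is $L$-Lipschitz, for every $z\in\partial\Omega(u)$ we get $c_0 r\le\abs{\grad u(y)-\grad u(z)}\le L\abs{y-z}$, hence
\[
  \dist(y,\partial\Omega(u)) \;\ge\; \frac{c_0}{L}\,r \;=\;\kappa r .
\]
Next I would check that $B_{\kappa r}(y)\subset\Omega(u)$. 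Because $\abs{\grad u(y)}>0$, the point $y$ lies neither in the interior of the zero set $B\setminus\overline{\O(u)}$ (where $\grad u\equiv 0$) nor on $\Geo$; since $\partial\O(u)=\Geo\,\dot{\cup}\,\Ge$ this forces $y\in\O(u)\cup\Ge=\Omega(u)$. As $\Omega(u)$ is open, the connected set $B_{\kappa r}(y)$ is disjoint from $\partial\Omega(u)$ and meets $\Omega(u)$, so it is contained in $\Omega(u)$.

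It then remains a purely geometric observation: since $\abs{y-x_0}\le r$ and $\kappa\le 1$, the set $B_r(x_0)\cap B_{\kappa r}(y)$ contains a ball of radius $\kappa r/2$ — namely $B_{\kappa r/2}(x_0)$ if $\abs{y-x_0}\le \kappa r/2$, and otherwise the ball of radius $\kappa r/2$ centred at the point of the segment $[x_0,y]$ at distance $\kappa r/2$ from $y$. Consequently
\[
  \abs{B_r(x_0)\cap\Omega(u)} \;\ge\; \abs{B_{\kappa r/2}} \;=\;\Big(\frac{\kappa}{2}\Big)^{\!n}\abs{B_r(x_0)},
\]
so \eqref{eq:dens_bound} holds with $c_1=(c_0/2L)^n$. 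The substantive input is the nondegeneracy theorem, which is already available; the only points demanding a little care are the verification that $y$ actually lies in the open set $\Omega(u)$ (so that the ball around it can be certified to stay inside $\Omega(u)$) and — if one insists on literally all $r>0$ — the harmless bookkeeping for large radii.
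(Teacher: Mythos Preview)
Your proof is correct and follows essentially the same approach as the paper: nondegeneracy produces a point $y\in\overline{B_r(x_0)}$ with $\abs{\grad u(y)}\ge c_0 r$, and the Lipschitz bound on $\grad u$ then forces a ball of radius comparable to $r$ around $y$ to lie in $\Omega(u)$. The paper shortcuts your final geometric step by simply noting $B_\rho(y)\subset B_{2r}(x_0)$ (with $\rho=\dist(y,\partial\Omega(u))\le\abs{y-x_0}\le r$) and reading off the bound for $B_{2r}$ instead of $B_r$; conversely, you are more careful than the paper in checking that $y$ actually belongs to the open set $\Omega(u)$ and in handling radii $r>r_0$.
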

\begin{proof}
   Let $x_0 \in \partial\Omega(u)$ and $r>0$. Due to Theorem \ref{theorem:nondeg}, there exists an $y \in \overline{B_r(x_0)}\cap\Omega(u)$ such that 
\[
  c_0\,r \leq \sup_{\overline{B_r(x_0)}}\abs{\grad u} = \abs{\grad u(y)}\,.
\]
Now let $\rho \leq r$ be the smallest radius such that $\partial B_\rho(y)\cap\partial\Omega(u)\neq \emptyset$. Hence, there exists an $z \in \partial B_\rho(y)\cap\partial\Omega(u)$ with 
\[
 c_0\,r \leq  \abs{\grad u(y)-\grad u(z)} \leq C(n,\omega_0)\,\rho\,.
\]
This implies immediately 
\[
 \left( \frac{c_0}{C(n,\omega_0)}\right)^n \leq \frac{\rho^n}{(2r)^n} = \frac{\abs{B_\rho(y)}}{\abs{B_{2r}(x_0)}}\leq \frac{\abs{B_{2r}(x_0)\cap\Omega(u)}}{\abs{B_{2r}(x_0)}}.
\]
\end{proof}
 As a direct consequence of the density bound \eqref{eq:dens_bound}, we find that $\partial\Omega(u)$ is a nullset with respect to the $n$-dimensional Lebesgue measure. 
\begin{lemma}\label{lem:nullset}
 Suppose $u\in H^{2,2}_0(B)$ is a solution of the problem \eqref{P}. Then there holds 
\[
  \mathcal{L}^n(\partial\Omega(u)) =0\,.
\]
\end{lemma}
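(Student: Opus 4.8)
The plan is to deduce the claim directly from the uniform density bound of Lemma~\ref{lem:density} together with the Lebesgue density theorem; no further energy estimates are needed. The one structural fact I would use is that $\Omega(u)$ is open (as noted above), so that $\partial\Omega(u)\cap\Omega(u)=\emptyset$ and therefore
\[
 \frac{\abs{B_r(x_0)\cap\partial\Omega(u)}}{\abs{B_r(x_0)}}+\frac{\abs{B_r(x_0)\cap\Omega(u)}}{\abs{B_r(x_0)}}\leq 1
\]
for every $x_0\in B$ and every $r>0$.

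I would then argue by contradiction. Assume $\mathcal{L}^n(\partial\Omega(u))>0$. Since $\partial\Omega(u)$ is closed, hence Lebesgue measurable, the Lebesgue density theorem guarantees a point $x_0\in\partial\Omega(u)$ at which $\partial\Omega(u)$ has density one, i.e.
\[
 \lim_{r\to0}\frac{\abs{B_r(x_0)\cap\partial\Omega(u)}}{\abs{B_r(x_0)}}=1.
\]
On the other hand, because $x_0\in\partial\Omega(u)$, Lemma~\ref{lem:density} yields $\abs{B_r(x_0)\cap\Omega(u)}\geq c_1\abs{B_r(x_0)}$ for all $r>0$. Substituting both relations into the displayed disjointness inequality and letting $r\to0$ gives $1\leq 1-c_1$, which contradicts $c_1>0$. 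Consequently $\mathcal{L}^n(\partial\Omega(u))=0$.

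There is essentially no hard step here; the statement is a short consequence of the nondegeneracy-driven density estimate. The only points that merit a word of care are the measurability of $\partial\Omega(u)$ (immediate, as it is a topological boundary, hence closed) and the fact that the density estimate of Lemma~\ref{lem:density} is valid down to arbitrarily small radii — which is exactly what is stated there, so that it can be combined with the $r\to0$ limit coming from the density theorem.
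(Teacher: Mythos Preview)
Your proof is correct and follows essentially the same approach as the paper: both combine the density estimate of Lemma~\ref{lem:density} with the Lebesgue density/differentiation theorem to rule out points of $\partial\Omega(u)$ having full measure nearby. The only cosmetic difference is that the paper phrases it via Lebesgue points of $\chi_{\Omega(u)}$ (no boundary point can be a Lebesgue point, hence the boundary has measure zero), whereas you apply the density theorem directly to $\chi_{\partial\Omega(u)}$; these are equivalent formulations of the same argument.
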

\begin{proof}
  Recall that since $\chi_{\Omega(u)} \in L^1(B)$, almost every $x\in B$ is a Lebesgue Point of $\chi_{\Omega(u)}$. 
Consider a Lebesgue Point $x_0\in \partial\Omega(u)$. Thus, Lemma~\ref{lem:density} implies
\begin{align*}
 0 = \chi_{\Omega(u)}(x_0) &= \lim_{r\to0}\fint\limits_{B_r(x_0)}\chi_{\Omega(u)}\,dx 
= \lim_{r\to0}\frac{\abs{B_r(x_0)\cap\Omega(u)}}{\abs{B_r(x_0)}} \geq c_1>0\,.
\end{align*}
Hence, the free boundary $\partial\Omega(u)$ does not contain any Lebesgue Point of $\chi_{\Omega(u)}$ and therefore $\mathcal{L}^n(\partial\Omega(u))~=~0$.
\end{proof}
In addition, the density estimate \eqref{eq:dens_bound} enables us to derive some more properties of the free boundary.  
The proof of the next lemma follows exaclty as in \cite{BaWa09}. 
\begin{lemma}\label{lem:R_0}
Let $u \in H^{2,2}_0(B)$ be a solution of the problem \eqref{P} and let us assume that the centre of the ball $B$ is contained in $\Omega(u)$. Then  $\partial\Omega(u)\cap\partial B = \emptyset$ if the radius $R_0$ of $B$ is sufficiently large.
\end{lemma}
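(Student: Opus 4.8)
The plan is to argue by contradiction using the density estimate~\eqref{eq:dens_bound} together with the optimality of $\Omega(u)$ and the homothety property of $\L$. Suppose the centre $x_B$ of $B$ lies in $\Omega(u)$ but $\partial\Omega(u)\cap\partial B\neq\emptyset$, so that there is a point $x_0\in\partial\Omega(u)\cap\partial B$. The first step is to localise near $x_0$: for a small radius $r$ (independent of $R_0$), Lemma~\ref{lem:density} forces a definite chunk of $\Omega(u)$ to sit inside $B_r(x_0)$, i.e. $\abs{B_r(x_0)\cap\Omega(u)}\geq c_1\abs{B_r(x_0)}$. Since $x_0\in\partial B$, roughly half of $B_r(x_0)$ lies outside $B$; hence a fixed fraction of the mass of $\Omega(u)$ that the density estimate localises at $x_0$ is forced into the thin spherical shell of $B$ near $\partial B$, where $\Omega(u)$ is squeezed between the clamped boundary $\partial B$ and the interior.

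The second step is to exploit the fact that $B$ is a huge ball, so that the part of $\Omega(u)$ lying within distance $r$ of $\partial B$ is, after a rigid motion, essentially a subset of a flat slab of fixed width — in particular it is contained in a half-space-like region whose geometry does not improve as $R_0\to\infty$. Because $u$ must vanish to first order on $\partial B$ (as $u\in H^{2,2}_0(B)$) and is nondegenerate along $\partial\Omega(u)$ by Theorem~\ref{theorem:nondeg}, one can construct an explicit competitor: cut off $u$ near $x_0$ by a cutoff $\eta$ supported in $B_{r}(x_0)$, removing the near-boundary sliver, and compare $\J$-energies exactly as in the proof of Theorem~\ref{theorem:vol_satis} and Theorem~\ref{theorem:nondeg}. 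The Lipschitz bound on $\grad u$ (the $C^{1,1}$ regularity, with $\eps$-independent constants) gives $\abs{u}\leq Cr^2$ and $\abs{\grad u}\leq Cr$ in $B_r(x_0)$, so deleting this piece costs at most $C(n,\omega_0)\abs{B_r(x_0)}$ in Dirichlet energy while shrinking the volume by a comparable amount; by Remark~\ref{remark:min} and Theorem~\ref{theo:u_min_I}, a strict volume decrease can only help the (rewarded) functional $\Ie$, and one derives an inequality of the form $\eps\abs{B_r(x_0)}\leq C(n,\omega_0)\abs{B_r(x_0)}$ — contradictory once $r$ is small and $\eps$ is fixed. Alternatively, and more in the spirit of \cite{BaWa09}: translate the whole optimal configuration inward by the vector pointing from $x_0$ towards $x_B$, a distance $\sim r$; since $R_0$ is large this translate still fits inside $B$, it has the same measure $\omega_0$ and the same value $\L$, but now $x_0$ has been pushed into the interior, so the boundary contact is removed — and a short argument (using that a genuinely interior optimal domain can be perturbed outward to strictly decrease $\J$ while staying in $B$, contradicting minimality, or directly that the contact point cannot recur) finishes the claim.

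The key steps, in order, are: (i) pick $x_0\in\partial\Omega(u)\cap\partial B$ and apply the density lower bound Lemma~\ref{lem:density} at $x_0$ to trap a fixed fraction of $\Omega(u)$ in a small fixed-size ball $B_r(x_0)$; (ii) use $\abs{B}\gg\omega_0$ so that $B_r(x_0)$ straddles $\partial B$ and the trapped mass is forced into a thin near-boundary region; (iii) build the cutoff competitor $v=\eta u$ removing the sliver, using the $\eps$-independent $C^{1,1}$ bounds to control $\abs{\lap(\eta u)}$, $\abs{\grad u}$, $\abs{u}$ on $B_r(x_0)$; (iv) compare $\Ie$-energies via Theorem~\ref{theo:u_min_I} and Remark~\ref{remark:min} to reach $\frac{\eps}{2}\abs{B_r(x_0)}\leq C(n,\omega_0)\abs{B_r(x_0)}$; (v) conclude a contradiction by choosing $r$ small (equivalently, observe $R_0$ must be bounded, contrary to hypothesis). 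The main obstacle is step~(ii)–(iii): one must be careful that the near-boundary geometry of $\partial B$ — its curvature $\sim 1/R_0$ — genuinely does not help the minimizer, i.e. that the competitor estimates remain uniform as $R_0\to\infty$; this is where the assumption that $x_B\in\Omega(u)$ (so the ``inside'' direction is unambiguous) and the largeness of $R_0$ are used, and it is the only place where the argument is more than a routine transcription of the estimates already appearing in Theorems~\ref{theorem:vol_satis}, \ref{theo:u_min_I} and \ref{theorem:nondeg}.
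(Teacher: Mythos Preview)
Your proposal has a genuine gap: the competitor argument in steps~(iii)--(v) does not produce a contradiction, and the radius $R_0$ of $B$ never actually enters your estimates. If you carry out the cutoff comparison exactly as in Theorem~\ref{theorem:nondeg}, you arrive (at best) at an inequality of the form
\[
   \frac{\eps}{2}\,\abs{B_r(x_0)} \;\leq\; C(n,\omega_0)\,\abs{B_r(x_0)},
\]
which simplifies to $\eps\leq 2C(n,\omega_0)$ --- a fixed numerical inequality, independent of $r$ and of $R_0$. Shrinking $r$ does not help, and the largeness of $R_0$ is irrelevant to it. In Theorem~\ref{theorem:nondeg} the same local computation \emph{does} yield a contradiction, but only because of the extra hypothesis $\xi_k\to 0$, which kills the right-hand side; at a generic free boundary point $x_0\in\partial\Omega(u)\cap\partial B$ there is no such smallness, and nondegeneracy in fact says the opposite. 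Your alternative translation argument is also inconclusive: translating $\Omega(u)$ inward produces another minimizer with the same $\L$ and the same volume, and you have not explained how this leads to a contradiction (for instance, $\partial\Omega(u)$ could touch $\partial B$ at several points, so a single translation need not remove all contact).

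The argument the paper has in mind (from \cite{BaWa09}) uses the density estimate in a global, not local, way and crucially exploits the connectedness of $\Omega(u)$ (Corollary~\ref{corollary:connected}) together with $\abs{\Omega(u)}=\omega_0$. Since $x_B\in\Omega(u)$ and $x_0\in\partial\Omega(u)\cap\partial B$ with $\abs{x_0-x_B}=R_0$, connectedness forces $\Omega(u)$ to meet every sphere $\partial B_\rho(x_B)$ for $0<\rho<R_0$. For $\rho$ larger than a fixed value depending only on $\omega_0$, such a sphere cannot lie entirely in $\Omega(u)$ (by the volume constraint), so it meets $\partial\Omega(u)$ at some point $z_\rho$. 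Choosing radii $\rho_k = k\,r_0$ for $k=K,K+1,\ldots,\lfloor R_0/r_0\rfloor$ yields free boundary points $z_k$ with pairwise distance at least $r_0$, hence the balls $B_{r_0/2}(z_k)$ are disjoint. Lemma~\ref{lem:density} then gives
\[
   \omega_0 \;=\; \abs{\Omega(u)} \;\geq\; \sum_k \abs{B_{r_0/2}(z_k)\cap\Omega(u)} \;\geq\; c_1\,\abs{B_{r_0/2}}\cdot\bigl(\lfloor R_0/r_0\rfloor - K\bigr),
\]
which is impossible once $R_0$ is large enough. The missing idea in your proposal is precisely this chain-of-balls construction: the density bound must be applied at \emph{many} free boundary points spread over a distance $\sim R_0$, not at the single contact point $x_0$.
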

As a consequence, we may let the radius of $B$ tend to infinity without affecting $\Omega(u)$, provided the centre of $B$ is contained in $\Omega(u)$. In particular, the optimal domain cannot form thin tentacles but remains a bounded domain.
The density estimate \eqref{eq:dens_bound} carries even more information about the shape of the free boundary. 
\begin{remark}\label{remark:corners}
 Let $x_0 \in \partial \Omega(u)$ be the top of a corner on the free boundary.
We denote by $\theta$ the opening angle in $x_0$. Depending on $n$, there holds
\[
 \abs{B_r(x_0)\cap\Omega(u)} = \begin{cases}
  \frac{1}{2}r^2\theta\,, & n=2  \\
 \frac{2}{3} \pi r^3\left(1-\cos\left(\frac{\theta}{2}\right)\right)\,, & n=3
\end{cases}.
\]
Then the lower bound on the density immediately implies
\[
  \theta \geq \begin{cases}
   2\pi c_1 \,, &n=2 \\
 2\arccos (1-2 c_1) \,, &n=3
\end{cases}.
\]
Hence, the opening angle $\theta$ is bounded from below. 
\end{remark}
At this point, we should emphasize that we gained the previous results although we cannot exclude that there are branch points on the free boundary. 

Next, we show the existence of a representative $\W$ of $\U$, which is superharmonic in the nonnegative phase and subharmonic in the nonpositive phase. For this purpose, we need the following definition, which is mainly cited from  \cite{MaZi97}. 
\begin{definition}\label{def:superharm}
 Let $\Omega \subset \R^n$ be an open set. Suppose $w \in L^1_{loc}(\Omega)$. 
Then $w$ is called superharmonic (subharmonic) if $w$ is lower (upper) semicontinuous and 
 \[
  w(x_0) \geq (\leq) \fint\limits_{B_r(x_0)}w(x)\,dx 
 \]
 for each ball $B_r(x_0)\subset\subset \Omega$.
\end{definition}
We now combine ideas of  \cite{CF_79} and  \cite{MaZi97} to gain  the representative $\W$.
\begin{theorem}
  There exists a function $\W: \Omega^+(u)\cup \Omega^-(u) \to\R$ such that 
\begin{itemize}
\item[(a)] $\W = \U$ almost everywhere in $\Omega^+(u)\cup \Omega^-(u)$, 
\item[(b)] $\W$ is superharmonic  in $\Omega^+$ and subharmonic in $\Omega^-$ in the sense of Definition~\ref{def:superharm}\,.
\end{itemize}
\end{theorem}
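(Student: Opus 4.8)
The plan is to build $\W$ locally by averaging $\U$ in a way that makes the mean-value inequality automatic, and then to patch the local pieces together using uniqueness of the superharmonic representative. The starting point is Theorem~\ref{ELI_nonneg}: for every ball $B_r(x_0)\subset\Omega^+(u)$ and every nonnegative $\phi\in C^\infty_c(B_r(x_0))$ we have $\int_{B_r(x_0)\cap\{u>0\}}\U\lap\phi\,dx\leq 0$. Since $\U$ is harmonic almost everywhere in $\Omega(u)$ and $\mathcal{L}^n(\partial\Omega(u))=0$ by Lemma~\ref{lem:nullset}, the set $\{u>0\}$ and $\Omega^+(u)$ differ only by a nullset, so the inequality reads $\int_{B_r(x_0)}\U\lap\phi\,dx\leq 0$ for all $0\leq\phi\in C^\infty_c(B_r(x_0))$. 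In distributional language this says $\lap\U\leq 0$ as a distribution (a nonpositive Radon measure) on $\Omega^+(u)$; by Corollary~\ref{ELI_nonpos}, $\lap\U\geq 0$ on $\Omega^-(u)$.

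First I would invoke the classical Riesz representation / regularization theorem for distributions with signed Laplacian: if $v\in L^1_{loc}(\Omega)$ satisfies $\lap v\leq 0$ in $\mathcal{D}'(\Omega)$, then $v$ agrees almost everywhere with a superharmonic function $\tilde v$ on $\Omega$, namely the pointwise limit (which exists and is lower semicontinuous) of the mollifications $v_\rho = v*\varphi_\rho$, which are monotone decreasing in $\rho$ on compactly contained subsets because $\lap v_\rho\leq 0$. Concretely, on $\Omega^+(u)$ one sets $\W(x):=\lim_{\rho\to 0} (\U*\varphi_\rho)(x)$ for $x$ with $\dist(x,\partial\Omega^+(u))>\rho$; the superharmonicity of each $\U*\varphi_\rho$ (it is smooth with nonpositive Laplacian, hence satisfies the super-mean-value property) passes to the limit since the limit is monotone. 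This yields a function that is superharmonic in the sense of Definition~\ref{def:superharm} and equals $\U$ a.e.\ on $\Omega^+(u)$. The same construction with reversed sign, or equivalently applied to $-\U$, produces the subharmonic representative on $\Omega^-(u)$. The two phases $\Omega^+(u)$ and $\Omega^-(u)$ are disjoint open sets (interiors of $\{u\geq 0\}$ and $\{u\leq 0\}$), so there is no compatibility condition to check on an overlap, and $\W$ is simply defined piecewise: $\W=\W^+$ on $\Omega^+(u)$ and $\W=\W^-$ on $\Omega^-(u)$. This gives both (a) and (b).

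The main obstacle is making rigorous the step that $\lap\U\leq 0$ as a distribution follows from the variational inequality of Theorem~\ref{ELI_nonneg}, together with the passage from $\{u>0\}$ to the open set $\Omega^+(u)$. The first part is essentially a density argument: an arbitrary $0\leq\phi\in C^\infty_c(\Omega^+(u))$ has support covered by finitely many balls $B_{r_i}(x_i)\subset\Omega^+(u)$, and a partition of unity subordinate to this cover writes $\phi=\sum\phi_i$ with each $\phi_i\geq 0$ and $\operatorname{supp}\phi_i\subset B_{r_i}(x_i)$, so Theorem~\ref{ELI_nonneg} applied to each $\phi_i$ and summed gives $\int\U\lap\phi\,dx\leq 0$. (One must be slightly careful that $\phi_i$ need not itself be nonnegative after multiplying by a cutoff; the cleaner route is to note that Theorem~\ref{ELI_nonneg}, being a local statement valid on every subball, is equivalent to $\lap\U\leq 0$ in $\mathcal D'$ on $\Omega^+(u)$ directly, because the test-function class $\{0\le\phi\in C_c^\infty(B_r(x_0)): B_r(x_0)\subset\Omega^+(u)\}$ already separates nonpositive from non-nonpositive distributions.) The second part uses $\mathcal{L}^n(\partial\Omega(u))=0$ from Lemma~\ref{lem:nullset} and $\mathcal{L}^n(\Ge)=0$ from Lemma~\ref{lem:nodal_nullset}, so that $\Omega^+(u)\setminus\{u>0\}$ is a nullset and the $L^2$ function $\U$ is unaffected; harmonicity of $\U$ a.e.\ in $\Omega(u)$ (stated after \eqref{eq:ast}) then ensures the mollifications behave well up to $\partial\Omega^+(u)$ from inside. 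I would also record, for use in later sections, that $\W$ is finite a.e., belongs to $L^1_{loc}$, and that on $\Omega^+(u)\cap\Omega(u)$ (where $\U$ is actually harmonic) $\W$ is simply the continuous harmonic representative, so the only genuinely new behaviour of $\W$ occurs along the free boundary $\partial\Omega(u)$.
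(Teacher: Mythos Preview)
Your approach is correct in spirit and reaches the same conclusion, but by a different route than the paper. The paper does not invoke the abstract Riesz--Weyl theorem on superharmonic representatives; instead it constructs $\W$ directly by showing that the ball averages $r\mapsto \abs{B_r}^{-1}\int_{B_r(x_0)\cap\{u>0\}}\U\,dx$ are monotone nondecreasing as $r\searrow 0$. This monotonicity is obtained by producing, following \cite{MaZi97}, explicit nonnegative test functions $\phi_k\in C^\infty_c(B_R(x_0))$ whose Laplacians converge in $L^2$ to $\psi_s-\psi_r$ (the difference of two normalized ball indicators), inserting them into Theorem~\ref{ELI_nonneg}, and passing to the limit. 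The pointwise limit of these averages is then $\W$, and the super-mean-value property follows immediately from the monotonicity together with Lemma~\ref{lem:nullset}. Your route is more economical if one is willing to quote the classical representation theorem; the paper's route is self-contained and sidesteps your partition-of-unity worry, since it never needs the distributional inequality on all of $\Omega^+(u)$ but only the ball-by-ball statement that Theorem~\ref{ELI_nonneg} already provides.

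Two points in your write-up need repair, though neither breaks the argument. First, $\Omega^+(u)$ and $\Omega^-(u)$ are in general \emph{not} disjoint: any open set on which $u\equiv 0$ (for instance the interior of $B\setminus\overline{\O(u)}$) lies in both. So there \emph{is} a compatibility condition on the overlap; it is resolved not by disjointness but by the observation that on this overlap $u\equiv 0$, hence $\U=\lap u+\L u=0$, and both the super- and the subharmonic representative of the zero function are zero. Second, and for the same reason, the claim that $\{u>0\}$ and $\Omega^+(u)$ differ only by a nullset is false. What is true is that $\Omega^+(u)\setminus\{u>0\}=\Omega^+(u)\cap\{u=0\}$ consists, up to the nullset $\partial\Omega(u)$ from Lemma~\ref{lem:nullset}, of points in the interior of $\{u=0\}$, where $\U=0$; hence $\int_{B_r(x_0)}\U\lap\phi\,dx=\int_{B_r(x_0)\cap\{u>0\}}\U\lap\phi\,dx$ as you need, but the justification is that $\U$ vanishes on the extra set, not that the extra set is null.
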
 
\begin{proof}
We restrict ourselves to prove the assertions only in $\Omega^+$. The changes one has to make for proving the other case are obvious.  
Consider $x_0 \in \Omega^+$ and $R>0$ such that $B_R(x_0)\subset \Omega^+$. 
We choose $0<r<s<R$  and set
\[
  \psi_t(x) := \begin{cases}
                      \frac{1}{\omega_n t^n}, & \abs{x-x_0}< t \\
                      0, & \text{otherwise}
                \end{cases}\,.
\]
Following the lines of \cite{MaZi97}, Theorem 2.58,  we construct a sequence of functions $\phi_k \in C^{\infty}_c(B_R(x_0))$  with $\phi_k\geq 0$ and 
\begin{equation}\notag
  \lap \phi_k (x) \stackrel{k\to\infty}{\longrightarrow} \psi_s(\abs{x-x_0}) - \psi_r(\abs{x-x_0}) \; \text{  in  } L^2(B_R(x_0)).
\end{equation}
 For further details in construction the sequence $(\phi_k)_k$ we refer to \cite{MaZi97}. 
Since each $\phi_k$ is a suitable comparison function, Theorem \ref{ELI_nonneg} implies
\begin{align*}
  0\geq \int\limits_{B_R(x_0)\cap\{u>0\}}\U\lap\phi_k\,dx .
\end{align*}
Passing to the limit $k\to\infty$ we obtain
\begin{align*}
\frac{1}{\abs{B_r}}\int\limits_{B_r(x_0)\cap\{u>0\}}\U\,dx &\geq 
\frac{1}{\abs{B_s}}\int\limits_{B_s(x_0)\cap\{u>0\}}\U\,dx\,.
\end{align*}
Hence, for each $x_0 \in \Omega^+$ the function 
\[
\W_r(x_0) := \frac{1}{\abs{B_r}}\int\limits_{B_r(x_0)\cap\{u>0\}}\U\,dx
\]
is nondecreasing as $r$ tends to zero. Furthermore, $\abs{\W_r(x_0)}\leq \norm{\U}_{L^\infty(B)}$ for each $x_0 $ and each $r>0$. Thus, the limit $\lim_{r\to 0}\W_r(x_0)$ exists for every $x_0 \in \Omega^+$ and we set 
\[
\W(x_0) := \lim_{r\searrow 0}\W_r(x_0)\,.
\]
Note that $\W$ is lower semicontinuous and $\W = \U$ almost everywhere in $\Omega^+$.
Since the free boundary is a nullset with respect to the $n$-dimensional Lebesgue measure (see Lemma~\ref{lem:nullset}),  we find
\[
\W(x_0) \geq \fint\limits_{B_r(x_0)}\U \,dx =  \fint\limits_{B_r(x_0)}\W \,dx
\]
and $\W$ is superharmonic in the sense of Definition \ref{def:superharm}. 
\end{proof}
As a consequence of the super-/ subharmonicity of $\W$, $\W$ is positive in each positive nonbranch point and negative in each negative nonbranch point.
\begin{lemma}\label{lem:W_pos_boundary}
 Suppose $x_0 \in \partial\Omega(u)\cap\Omega^+(u)$. Then there holds $\W(x_0)>0$. On the other hand, if $x_0 \in \partial\Omega(u)\cap\Omega^-(u)$, then $\W(x_0)<0$. 
\end{lemma}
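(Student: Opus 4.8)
The statement asserts that the monotone envelope $\W$ of $\U = \lap u + \L u$ is strictly positive at boundary points lying in the nonnegative phase (and strictly negative at boundary points in the nonpositive phase). By symmetry it suffices to treat $x_0 \in \partial\Omega(u)\cap\Omega^+(u)$. The plan is to argue by contradiction: assume $\W(x_0) \leq 0$. Since $\W$ is superharmonic in $\Omega^+(u)$ in the sense of Definition~\ref{def:superharm} and equals $\U$ almost everywhere there, and since $x_0$ lies in the open set $\Omega^+(u)$, the superharmonicity gives, for every sufficiently small $r>0$,
\[
  0 \geq \W(x_0) \geq \fint\limits_{B_r(x_0)}\W\,dx = \fint\limits_{B_r(x_0)}\U\,dx = \fint\limits_{B_r(x_0)}\bigl(\lap u + \L u\bigr)\,dx.
\]
Now I would exploit the geometry at the free boundary. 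Because $x_0 \in \partial\Omega(u)$, on $B_r(x_0)\setminus\Omega(u)$ the function $u$ vanishes identically (together with $\grad u$), so that $\lap u = 0$ a.e.\ there and $u = 0$ there; hence the averages above reduce to integrals over $B_r(x_0)\cap\Omega(u)$. The density estimate of Lemma~\ref{lem:density} guarantees $\abs{B_r(x_0)\cap\Omega(u)} \geq c_1\abs{B_r(x_0)}$, so a nonnegligible portion of the ball lies inside $\Omega(u)\subset\Omega^+(u)$, where $u > 0$.

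The key step is to extract a strictly positive contribution from the $\L u$ term while controlling the $\lap u$ term. For the $\lap u$ term I would integrate by parts: on $B_r(x_0)\cap\Omega(u)$ one has $\int \lap u = \int_{\partial(B_r(x_0)\cap\Omega(u))} \partial_\nu u\, dS$, and on the portion of the boundary lying on $\partial\Omega(u)$ the normal derivative vanishes since $\grad u = 0$ there (recall $\Ge$ is where $u$ and $\grad u$ both vanish, and the free boundary $\Geo$ is contained in $\{\grad u = 0\}$ by construction of $\Omega(u)$ in \eqref{eq:ast}), so only the part of $\partial B_r(x_0)$ inside $\Omega(u)$ contributes, giving $\abs{\int_{B_r(x_0)\cap\Omega(u)}\lap u\,dx} \leq \int_{\partial B_r(x_0)\cap\Omega(u)}\abs{\grad u}\,dS$. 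By the $C^{1,1}$ regularity and the nondegeneracy (Theorem~\ref{theorem:nondeg}), $\abs{\grad u}$ on $B_r(x_0)$ is comparable to $r$: it is at most $Cr$ by Lipschitz continuity of $\grad u$ (since $\grad u(x_0)=0$), so this boundary term is $O(r^{n})$. Meanwhile $\int_{B_r(x_0)\cap\Omega(u)} u\,dx > 0$, and one would like to show it dominates, i.e.\ is bounded below by $c\,r^{n+1}$ for small $r$ — but here a subtlety appears, because a priori $u$ could be as small as $o(r^2)$ pointwise without a matching lower bound.

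To produce such a lower bound I would use the nondegeneracy more carefully together with the interior equation. By Theorem~\ref{theorem:nondeg} there is a point $y\in\overline{B_r(x_0)}\cap\Omega(u)$ with $\abs{\grad u(y)}\geq c_0 r$; since $u=\grad u = 0$ on $\partial\Omega(u)$ and $u\in C^{1,1}$, integrating along a segment from the nearest free-boundary point shows $u$ attains values of size $\gtrsim r^2$ on a ball of radius $\gtrsim r$ around $y$, hence $\int_{B_r(x_0)\cap\Omega(u)} u\,dx \gtrsim r^{n+2}$. Thus dividing the displayed inequality by $r^n$ gives $0 \geq -Cr + c\,\L\, r^2 \cdot(\text{something})$... which is in fact consistent for small $r$, so the naive scaling does \emph{not} immediately yield a contradiction. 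The right way, and the main obstacle, is instead to take $r\to 0$ in $\W(x_0) = \lim_{r\to 0}\W_r(x_0) = \lim_{r\to 0}\frac{1}{\abs{B_r}}\int_{B_r(x_0)\cap\{u>0\}}\U\,dx$ and show this limit is strictly positive. Here one uses that $\U$ is harmonic in $\Omega(u)$ and, since $u=\grad u=0$ and $\lap u = 0$ on $B_r(x_0)\setminus\Omega(u)$, one has $\U = \L u \geq 0$ there with $\U$ not identically zero near $x_0$ (nondegeneracy rules out $u\equiv 0$ near $x_0$); a boundary Harnack / strong-minimum-type argument for the superharmonic function $\W$, exploiting the density lower bound so that the "good set" $\Omega(u)\cap B_r(x_0)$ is not too thin, then forces $\W(x_0)>0$: if $\W(x_0)$ were $\leq 0$, superharmonicity plus $\W\geq 0$ on a set of density $\geq c_1$ (coming from $\L u\geq 0$ outside $\Omega$ and the minimum principle inside) would propagate a strict sign. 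I expect the technical heart to be making this propagation rigorous — i.e.\ combining the monotone-average structure of $\W$ with the density estimate to rule out $\W(x_0)=0$ — rather than the elementary integration-by-parts bookkeeping; an alternative, possibly cleaner route is to invoke the strong minimum principle for superharmonic functions directly on $\Omega^+(u)$ together with the fact, provable from Theorem~\ref{ELI_nonneg} and nondegeneracy, that $\W$ cannot vanish identically on any neighbourhood of $x_0$ in $\Omega^+(u)$.
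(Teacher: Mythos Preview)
You correctly identify the obstruction in your own approach: with the plain ball average $\fint_{B_r(x_0)}\U\,dx$, the $\lap u$ contribution becomes a flux term of order $r$ while the $\L u$ contribution is only of order $r^2$, so no contradiction results as $r\to 0$. Your fallback suggestions (boundary Harnack, strong minimum principle) are not worked out; in particular, the strong minimum principle would require knowing that $x_0$ is a minimum point of $\W$ in $\Omega^+(u)$, i.e.\ that $\W\geq 0$ there, which is not available a priori. As written, the proposal does not contain a complete argument.

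The paper's proof rests on a single idea that sidesteps the scaling mismatch: replace the unweighted average by the Green-function weighted quantity
\[
  \psi(r):=\int_{B_r(x_0)}\U(x)\,\Gamma_r(x-x_0)\,dx,
\]
where $\Gamma_r\geq 0$ is the Green's function of $B_r(x_0)$. The representation formula together with $u(x_0)=0$ gives
\[
  \int_{B_r(x_0)}\lap u\,\Gamma_r\,dx=\fint_{\partial B_r(x_0)}u\,dS>0,
\]
so the Laplacian contribution is \emph{itself} nonnegative (it is the spherical mean of $u\geq 0$), and hence $\psi(r)>0$. Since $\partial_r\Gamma_r$ is constant in $x$, one computes $\psi'(r)=\frac{r}{n}\fint_{B_r(x_0)}\U\,dx\leq \frac{r}{n}\W(x_0)$ by superharmonicity of $\W$; integrating from $0$ gives $0<\psi(r)\leq \frac{r^2}{2n}\W(x_0)$ and hence $\W(x_0)>0$. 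The Green weight is exactly what converts your uncontrolled flux $\int_{\partial B_r}\partial_\nu u\,dS$ into the sign-definite spherical mean of $u$; neither the density estimate nor a quantitative nondegeneracy lower bound on $u$ is needed.
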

\begin{proof}
Again, we restrict ourselves to consider a positive nonbranch point $x_0$. 
We choose $x_0 \in \partial\Omega(u)\cap\Omega^+(u)$. The classical representation formula yields for each ball $B_r(x_0)$ with $B_r(x_0)\subset\Omega^+(u)$
\[
  \int\limits_{B_r(x_0)}\lap u(x)\,\Gamma_r(x-x_0)\,dx = \fint\limits_{\partial B_r(x_0)}u(x)\,dS(x) > 0 
\]
where $\Gamma_r$ is the nonnegative fundamental solution for the Laplacian, which vanishes on $\partial B_r(x_0)$.  Thus,
\[
\psi(r) :=  \int\limits_{B_r(x_0)}\U(x)\Gamma_r(x-x_0)\,dx > \L\int_{B_r(x_0)}u(x)\,\Gamma_r(x-x_0)\,dx \geq 0\,.
\]
Differentiating with respect to $r$ gives us
\[
  \frac{d}{dr}\psi(r)  \leq \frac{r}{n}\W(x_0)
\]
since $\W$ is superharmonic. Integrating with respect to $r$ proves the claim. 
\end{proof}
\begin{corollary}\label{corollary:sign_U}
  There exists an inner neighbourhood $\mathcal{A}^+$ of $\partial\Omega(u)\cap\Omega^+(u)$ and an inner neighbourhood $\mathcal{A}^-$ of $\partial\Omega(u)\cap\Omega^-(u)$ such that $\W(x)>0$ for every $x\in\mathcal{A}^+$ and $\W(x)<0$ for every $x \in \mathcal{A}^-$.
\end{corollary}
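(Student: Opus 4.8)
The plan is to derive Corollary~\ref{corollary:sign_U} as a direct consequence of Lemma~\ref{lem:W_pos_boundary} together with the lower semicontinuity of $\W$ on $\Omega^+(u)$ and the upper semicontinuity of $\W$ on $\Omega^-(u)$. Since the two statements are symmetric, I would carry out the argument for $\mathcal{A}^+$ only and remark that the construction of $\mathcal{A}^-$ is analogous.

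\textbf{Key steps.} First, fix a point $x_0 \in \partial\Omega(u)\cap\Omega^+(u)$. By Lemma~\ref{lem:W_pos_boundary} we have $\W(x_0)>0$. Since $\W$ is lower semicontinuous on the open set $\Omega^+(u)$ (established in the previous theorem), the set $\{x\in\Omega^+(u): \W(x) > \tfrac{1}{2}\W(x_0)\}$ is relatively open in $\Omega^+(u)$ and contains $x_0$; hence it contains a ball $B_{\rho(x_0)}(x_0)\subset\Omega^+(u)$ on which $\W$ is strictly positive. Second, I would simply take the union
\[
  \mathcal{A}^+ := \bigcup_{x_0\in\partial\Omega(u)\cap\Omega^+(u)} B_{\rho(x_0)}(x_0).
\]
This $\mathcal{A}^+$ is open, it is contained in $\Omega^+(u)$, it contains $\partial\Omega(u)\cap\Omega^+(u)$ in its closure (indeed every such boundary point lies in $\mathcal{A}^+$ itself, since $\W(x_0)>0$ forces $x_0$ to belong to its own ball), so it is an inner neighbourhood of $\partial\Omega(u)\cap\Omega^+(u)$, and by construction $\W>0$ throughout $\mathcal{A}^+$. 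Third, running the symmetric argument with the upper semicontinuity of $\W$ on $\Omega^-(u)$ and the second assertion of Lemma~\ref{lem:W_pos_boundary} produces $\mathcal{A}^-$ with $\W<0$ on it.

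\textbf{Main obstacle.} There is essentially no analytic difficulty here; the whole content has already been absorbed into Lemma~\ref{lem:W_pos_boundary} and the semicontinuity of $\W$. The only point requiring a little care is the precise meaning of ``inner neighbourhood of $\partial\Omega(u)\cap\Omega^+(u)$'': one must check that every point of $\partial\Omega(u)\cap\Omega^+(u)$ genuinely admits a one-sided neighbourhood inside $\Omega^+(u)$ on which $\W$ is defined, which is exactly the hypothesis $B_r(x_0)\subset\Omega^+(u)$ used in Lemma~\ref{lem:W_pos_boundary}. Since $\Omega^+(u)=\interior\{u\ge 0\}$ is open and such boundary points lie in it, this is automatic, and the union construction above gives the claim without further work.
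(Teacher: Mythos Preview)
Your proposal is correct and is exactly the argument the paper has in mind: the paper states Corollary~\ref{corollary:sign_U} without proof, as an immediate consequence of Lemma~\ref{lem:W_pos_boundary} and the lower (resp.\ upper) semicontinuity of $\W$ on $\Omega^+(u)$ (resp.\ $\Omega^-(u)$) established in the preceding theorem. Your union-of-balls construction spells this out cleanly, and your observation that $\partial\Omega(u)\cap\Omega^+(u)$ already lies in the open set $\Omega^+(u)$ is precisely what makes the semicontinuity argument applicable at those boundary points.
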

\begin{remark}
If we could establish a lower bound for $\abs{\lap u}$ in an inner neighbourhood of $\partial\Omega$, the previous corollary would imply that $u$ cannot change its sign close to the free boundary. In case of a smooth optimal domain ($\partial\Omega$ at least $C^{2,\alpha}$), the first domain variation of the optimal domain would imply $\lap u = const.$ in $\partial\Omega$. This condition would suffice to disprove the existence of branch points. Thus, future works should examine the regularity of the free boundary $\partial\Omega$.
\end{remark}
\subsubsection*{Acknowledgements}
The author likes to thank Alfred Wagner for many fruitful discussions. 

\subsubsection*{Published in Calculus of Variations and Partial Differential Equations}
The final publication will be available at link.springer.com.
%%%%%%%%%%%%%%%%%%%%%%%%%%%%%%%%%%%%%%%%%%%%%%%%%%%%%%%%%%%%%%%%%%%%%%%%%%%%%%%%%%%%%%%%%%%%%%%%%%%%%%

\textsc{RWTH Aachen University, Institut für Mathematik, Templergraben 55, D-52062 Aachen, Germany}\\
\textit{E-mail adress}: \texttt{stollenwerk@instmath.rwth-aachen.de}
\end{document}